\newtheorem{theorem}{Theorem}[section]
\renewcommand{\theequation}{\thesection.\arabic{equation}}
\newtheorem{assumption}[theorem]{Assumption}
\newtheorem{lemma}[theorem]{Lemma}
\newtheorem{remark}[theorem]{Remark}
\numberwithin{equation}{section}
\newenvironment{proof}[1][Proof]{\noindent\textit{#1.}}{\hfill \rule{0.5em}{0.5em}}
\journal{International Journal of Bifurcation and Chaos (IJBC)}
\begin{document}

\begin{frontmatter}

\title{Turing-Turing Bifurcation and Normal Form in a Predator-Prey Model with Predator-Taxis and Prey Refuge}
\author[1]{Yehu Lv\corref{mycorrespondingauthor}}
\ead{sms\_lvyh@ujn.edu.cn}
\address[1]{School of Mathematical Sciences, University of Jinan, Jinan 250022, Shandong Province, People's Republic of China}
\cortext[mycorrespondingauthor]{Corresponding author.}

\begin{abstract}
This paper investigates a predator-prey reaction-diffusion model incorporating predator-taxis and a prey refuge mechanism, subject to homogeneous Neumann boundary conditions. Our primary focus is the analysis of codimension-two Turing-Turing bifurcation and the calculation of its associated normal form for this model. Firstly, employing the maximum principle and Amann's theorem, we rigorously prove the local existence and uniqueness of classical solutions. Secondly, utilizing linear stability theory and bifurcation theory, we conduct a thorough analysis of the existence and stability properties of the positive constant steady state. Furthermore, we derive precise conditions under which the model undergoes a Turing-Turing bifurcation. Thirdly, by applying center manifold reduction and normal form theory, we derive the method for calculating the third-truncated normal form characterizing the dynamics near the Turing-Turing bifurcation point. Finally, we present numerical simulations to validate the theoretical findings, confirming the correctness of the analytical results concerning the bifurcation conditions and the derived normal form.
\end{abstract}

\begin{keyword}
Predator-prey model; Predator-taxis; Prey refuge; Turing-Turing bifurcation; Normal form

\MSC[2020] 35B10, 37G05, 37L10, 92D25
\end{keyword}

\end{frontmatter}

\section{Introduction}
\label{sec:1}

Traditional approaches for modeling prey refuge effects in predator-prey systems typically consider either a constant number or constant proportion of prey protected from predation \cite{lv1}. Following \cite{lv2}, we denote the refuge-occupying prey population as $u_{r}$. This parameterization manifests in two biologically distinct forms:

(i) Density-dependent refuge: protection scales with prey abundance, yielding $u_{r}=\beta u(t)$ where $\beta \in (0,1)$ represents the refuge proportion.

(ii) Fixed-capacity refuge: the sheltered prey number depends on environmental carrying capacity, giving $u_{r}=C$ (a positive constant).

\noindent Moreover, in the literature, various functional responses have been proposed to model prey refuge. Common mathematical representations include the following:

(i$^{\prime}$) A bilinear refuge term $muv$, where $m$ is the refuge coefficient, where $u$, $v$ denote prey and predator densities, respectively \cite{lv3, lv4}.

(ii$^{\prime}$) Saturating nonlinear refuge functions, such as $\frac{muv}{a+v}$, where $a$ is the half-saturation constant \cite{lv5, lv6}.

(iii$^{\prime}$) A ratio-dependent nonlinear refuge term $\frac{mu^{2}v}{a+u^{2}}$ \cite{lv7}.

(iv$^{\prime}$) A more general nonlinear form $\frac{mu^{2}v}{a+u^{2}v}$, which incorporates a composite saturation term involving both species densities \cite{lv8}.

\noindent These expressions reflect a spectrum of nonlinear interactions, from bilinear to saturating and ratio-dependent forms, illustrating how refuge protection can depend nonlinearly on species densities, often leading to richer ecological dynamics.

When refuge usage follows a constant ratio (Case (i)) as in \cite{lv9}, the resulting predator-prey dynamics reduce to an ordinary differential equation (ODE) model where predation acts only on the exposed prey population $u-u_{r}$, that is
\begin{equation*}
\left\{\begin{aligned}
&\frac{\mathrm{d}u}{\mathrm{d}t}=\left(r\left(1-\frac{u}{K}\right)-\frac{q(1-\beta)v}{(1-\beta)u+a}\right)u, \\
&\frac{\mathrm{d}v}{\mathrm{d}t}=b\left(\frac{p(1-\beta)u}{(1-\beta)u+a}-c\right)v,
\end{aligned}\right.
\end{equation*}
where $u=u(t)$ represents prey population density (measured in biomass or number density), $v=v(t)$ denotes predator population density. The model dynamics are governed by ecologically meaningful parameters, with all constants satisfying $r, K, q, a, b, p, c \in \mathbb{R}^{+}$.

In spatially explicit predator-prey models, directional movement mechanisms critically influence pattern formation: predators exhibit prey-taxis (directed movement toward prey density gradients), while prey display predator-taxis (directional movement opposite to predator gradients). The formal study of these mechanisms originated with Kareiva and Odell's seminal 1987 work \cite{lv10}, which quantified predator aggregation through non-random spatial distributions and established the first ecological model incorporating prey-taxis. Subsequent research has extensively modeled prey-taxis effects on spatiotemporal population dynamics (e.g., \cite{lv13, lv11, lv12}). Complementary to prey-taxis, predator-taxis represents prey's chemotactic avoidance behavior in response to predation risk. As experimentally validated by Wang and Zou \cite{lv14}, this anti-predator strategy actively suppresses spatial heterogeneity formation. Recent modeling advances have further explored predator-taxis implications (\cite{lv17, lv15, lv16}). Most recently, Lv \cite{lv18} extended this framework by examining Turing-Hopf bifurcation in delayed diffusive predator-prey model incorporating both taxis mechanism and fear effect.

The preceding analysis establishes that biologically realistic predator-prey models should incorporate two critical mechanisms: (1) prey's active avoidance movement (predator-taxis) to evade predation, and (2) refuge-mediated protection of prey subpopulations to prevent extinction. These ecologically mandated features maintain model persistence and pattern-forming capabilities. To address these requirements, we propose the following reaction-diffusion-taxis model with prey refuge, that is
\begin{equation}
\left\{\begin{aligned}
&\frac{\partial u}{\partial t}=d_{u}\Delta u+\nabla \cdot(\alpha u \nabla v)+u(r_{0}-au)-\frac{b_{1}(1-\beta)u}{b_{2}v+(1-\beta)u}v, & x \in \Omega, t>0, \\
&\frac{\partial v}{\partial t}=d_{v}\Delta v-m_{1}v+\frac{cb_{1}(1-\beta)u}{b_{2}v+(1-\beta)u}v, & x \in \Omega, t>0, \\
&\frac{\partial u}{\partial \nu}=\frac{\partial v}{\partial \nu}=0, & x \in \partial \Omega, t>0, \\
&u(x,0)=u_{0}(x), v(x,0)=v_{0}(x), & x \in \Omega,
\end{aligned}\right.
\end{equation}
where $u(x,t)$ and $v(x,t)$ denote prey and predator densities at position $x \in \Omega$ and time $t>0$, where $\Omega \subset \mathbb{R}^{n}$ ($n \geq 1$) is a smooth bounded domain with outward unit normal $\nu$ on $\partial\Omega$. $d_{u}$ and $d_{v}$ are the random diffusion coefficients of prey and predator, respectively, flux term $\nabla \cdot (\alpha u \nabla v)$ with sensitivity $\alpha>0$, modeling prey's directional evasion from predator gradients, $\frac{b_{1}u}{u+b_{2}v}$ is the ratio-dependent functional response (Arditi-Ginzburg type \cite{lv19}), $u_{0}(x)$ and $v_{0}(x)$ are continuous initial functions. Ecologically, $r_{0}$ is the intrinsic growth rate of the prey, and $a$ measures the strength of intra-specific competition among prey individuals. The predation process is described by a ratio-dependent functional response: $b_{1}$ is the maximum predation rate, and $b_{2}$ accounts for predator interference during foraging. The parameter $m_{1}$ is the natural mortality rate of predator, and $c$ is the conversion efficiency, quantifying how efficiently consumed prey are converted into predator biomass. All parameters $r_{0}, a, b_{1}, b_{2}, m_{1}, c>0$. The ratio-dependent response reflects empirical evidence where per capita predation depends on prey-to-predator abundance ratios, accounting for competition effects during foraging \cite{lv20,lv21,lv22}.

This paper focuses on the Turing-Turing bifurcation, a codimension-two spatial instability occurring at the intersection of two Turing bifurcation curves. This phenomenon drives the formation of multi-scale spatial patterns through predator-taxis and prey refuge interactions, where prey movement generates localized ecological structures including refuge hotspots and predator aggregation zones. Such pattern multi-stability and superposition explain complex spatial configurations observed in natural models, from marine environments (e.g., zooplankton-phytoplankton dynamics mediated by nutrient gradients) to terrestrial ecosystems with refuge heterogeneity. The study of codimension-two Turing-Turing bifurcations has become a pivotal avenue for understanding complex, multi-scale pattern formation across biological systems. In ecology, the role of organism movement is paramount, as demonstrated by Xing et al. \cite{lv23}, who showed that predator-taxis can induce both Turing and Turing-Turing bifurcations, underscoring its critical role in generating spatiotemporal diversity. This builds upon a rich theoretical foundation established in developmental biology, where models like the Gierer-Meinhardt model, analyzed by Zhao et al. \cite{lv24}, have long utilized Turing-Turing bifurcations to explain multi-stable patterns in processes like vascular self-organization. Subsequent research has expanded this framework to various contexts: analyzed alongside Turing-Hopf interactions in generalized Brusselator model \cite{lv25}, the normal form has been derived for the diffusive Bazykin system with prey-taxis \cite{lv26}, and explored in activator–inhibitor system with gene expression delay \cite{lv27}. Methodologically, the derivation of normal form formulae has been extended to systems with nonlinear diffusion \cite{lv28}. However, despite these significant advances, a comprehensive analysis that derives the explicit normal form for a Turing-Turing bifurcation in a predator-prey model incorporating both predator-taxis and a prey refuge mechanism remains an open challenge. This work is dedicated to filling this specific gap.

Computational methodologies for codimension-two bifurcations are well-established for several classical scenarios: Turing-Hopf \cite{lv29, lv30, lv31, lv32, lv33, lv34, lv35, lv36, lv37, lv38, lv39}, Hopf-Hopf \cite{lv40, lv41, lv42, lv43}, and Bogdanov-Takens (BT) \cite{lv44, lv45, lv46, lv47, lv48} bifurcations have been extensively studied in the literature. However, the derivation of normal form for codimension-two Turing-Turing bifurcation remains relatively unexplored. This significant gap persists despite the theoretical importance of such bifurcation in explaining multi-scale pattern formation arising from the interaction of two independent Turing instabilities.

The original contributions of this work are fourfold: First, we introduce an integrated ecological model, a reaction-diffusion-taxis system that synergistically incorporates predator-taxis (reflecting active prey avoidance) and a prey refuge. This integration offers a more ecologically realistic framework for investigating spatial dynamics. Second, we present a pioneering bifurcation analysis by conducting the first comprehensive study of the codimension-two Turing-Turing bifurcation in an ecological context. We rigorously establish the conditions for its occurrence and derive the corresponding explicit normal form, accompanied by fully computable coefficients. Third, we develop a general analytical framework rooted in center manifold and normal form theory, enabling the systematic analysis of this high-codimension bifurcation. The approach is applicable to a broad range of reaction-diffusion-taxis systems. Fourth, we achieve a synthesis of theoretical and numerical results: the complex multi-stable spatial patterns predicted by the normal form analysis are confirmed through numerical simulations, thereby bridging rigorous mathematical theory with observable dynamical behavior.

The paper is structured as follows. Section \ref{sec:2} establishes the local existence and uniqueness of classical solutions for model (1.1). Turing and Turing-Turing bifurcation analyses for model (1.1) are performed in Section \ref{sec:3}. Section \ref{sec:4} presents the algorithm for calculating the normal form of the Turing-Turing bifurcation in model (1.1). Numerical simulations validating the theoretical findings are conducted in Section \ref{sec:5}. Finally, Section \ref{sec:6} concludes this paper with a summary and discussion. Throughout this paper, $\mathbb{N}$ denotes the set of positive integers, and $\mathbb{N}_{0}=\mathbb{N} \cup \{0\}$.

\section{Local existence and uniqueness of classical solutions to model (1.1)}
\label{sec:2}

\begin{assumption}\label{asm:2.1}
$(u_{0}(x),v_{0}(x)) \in \left[W^{1,p}(\Omega)\right]^{2}$ where $p>1$ with $u_{0}(x), v_{0}(x) \geq 0 (\not \equiv 0)$.
\end{assumption}
Then, we have the following theorem.
\begin{theorem}\label{thm:2.2}
If Assumption \ref{asm:2.1} holds, then model (1.1) admits a unique local-in-time, non-negative classical solution
\begin{equation*}
(u,v) \in \left[C\left(\left[0,\mathrm{T}_{\max}\right); W^{1,p}(\Omega)\right) \cap C^{2,1}\left(\overline{\Omega} \times \left(0,\mathrm{T}_{\max}\right)\right)\right]^{2},
\end{equation*}
where $\mathrm{T}_{\max}$ denotes the maximal existence time.
\end{theorem}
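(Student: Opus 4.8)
The plan is to establish local existence and uniqueness by recasting model (1.1) as an abstract quasilinear parabolic system and invoking Amann's theory, then to obtain non-negativity separately via the maximum principle. The key structural observation is that the taxis term $\nabla\cdot(\alpha u\nabla v)$ makes the diffusion part genuinely cross-diffusive, so I would write the system in divergence form $\partial_t w = \nabla\cdot\bigl(\mathcal{A}(w)\nabla w\bigr)+\mathcal{F}(w)$ with $w=(u,v)^{\mathrm{T}}$ and diffusion matrix
\begin{equation*}
\mathcal{A}(w)=\begin{pmatrix} d_{u} & \alpha u \\ 0 & d_{v}\end{pmatrix},
\end{equation*}
together with the reaction vector $\mathcal{F}(w)$ collecting the kinetic terms. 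First I would verify that $\mathcal{A}(w)$ is upper triangular with positive diagonal entries $d_u,d_v>0$, hence its eigenvalues are exactly $d_u$ and $d_v$, both positive; this makes $\mathcal{A}(w)$ \emph{normally elliptic} in the sense required by Amann's theorem, uniformly on bounded sets of $w$. I would also record that $\mathcal{A}$ and $\mathcal{F}$ are smooth (indeed real-analytic away from the singularity of the ratio-dependent response) in $w$.

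Second, I would address the one genuine subtlety in the reaction terms: the Arditi--Ginzburg functional response $\tfrac{b_1(1-\beta)u}{b_2 v+(1-\beta)u}$ is singular at $(u,v)=(0,0)$, so $\mathcal{F}$ is not a priori smooth up to the origin. The standard remedy is to note that for strictly positive data the solution stays in the region where the denominator is bounded away from zero on compact time intervals; more carefully, one shows the nonlinearity extends continuously (with value $0$) and is locally Lipschitz on the relevant cone, so that the abstract theory applies on the invariant positive region. With normal ellipticity and local Lipschitz (smooth) nonlinearity in hand, Amann's theorem (the fixed-point/semigroup construction for quasilinear parabolic systems under homogeneous Neumann boundary conditions, with $W^{1,p}(\Omega)$, $p>n$, as the base space) yields a unique maximal classical solution $(u,v)\in C\bigl([0,\mathrm{T}_{\max});W^{1,p}(\Omega)\bigr)\cap C^{2,1}\bigl(\overline{\Omega}\times(0,\mathrm{T}_{\max})\bigr)$, which delivers exactly the regularity asserted in the statement.

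Third, for non-negativity I would argue componentwise using the parabolic maximum principle. Observing that both equations have the quasi-positive structure $\partial_t u = d_u\Delta u+\nabla\cdot(\alpha u\nabla v)+u\,g_1(u,v)$ and $\partial_t v = d_v\Delta v + v\,g_2(u,v)$, where each reaction term carries a factor of the unknown itself, the set $\{u\ge 0,\,v\ge 0\}$ is invariant: one checks that on the boundary of the positive cone the vector field points inward, so solutions launched from non-negative (and not identically zero) initial data remain non-negative, and by the strong maximum principle in fact become strictly positive for $t>0$ on connected $\Omega$. I would handle the $u$-equation with care because of the advective taxis flux, rewriting it as a linear parabolic equation for $u$ with coefficients depending on the (already-constructed) $v$ and applying the comparison principle with the trivial subsolution $u\equiv 0$.

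The main obstacle I anticipate is not the abstract existence machinery, which is routine once normal ellipticity is verified, but rather the singularity of the ratio-dependent response at the origin: making rigorous the claim that the nonlinearity is admissible for Amann's framework on the positive cone, and simultaneously that the maximum principle keeps the trajectory inside the region where this nonlinearity is well-behaved, requires a coupled bootstrap. I would resolve this by first truncating the singular response outside a neighborhood of the origin to obtain a globally smooth modified system, applying Amann's theorem to the modified system, then using the maximum principle to show the solution of the modified system stays positive and hence solves the original system on $[0,\mathrm{T}_{\max})$.
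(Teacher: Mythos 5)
Your proposal follows essentially the same route as the paper: rewrite model (1.1) as the triangular quasilinear system (2.1) with diffusion matrix $\left(\begin{smallmatrix} d_{u} & \alpha u \\ 0 & d_{v}\end{smallmatrix}\right)$, invoke Amann's theorem for local existence and uniqueness of a classical solution, and use the parabolic maximum principle for non-negativity. The paper's own proof is in fact far terser---it never addresses the singularity of the ratio-dependent response at the origin---so your truncation and invariant-region discussion is additional care layered on the same method, not a different approach.
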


\begin{proof}
To establish the non-negativity of solutions, we apply the maximum principle for parabolic equations to model (1.1). Denote
\begin{equation*}\begin{aligned}
&f(u,v)=u(r_{0}-au)-\frac{b_{1}(1-\beta)uv}{b_{2}v+(1-\beta)u}, \\
&g(u,v)=v\left(-m_{1}+\frac{cb_{1}(1-\beta)u}{b_{2}v+(1-\beta)u}\right).
\end{aligned}\end{equation*}
Consider the evolution of $u(x,t)$. If $u$ were to become negative at some point, the strong maximum principle would be violated. Specifically, at any point where $u=0$, the diffusion term $d_{u}\Delta u$ and the taxis term $\nabla \cdot (\alpha u\nabla v)$ both vanish. The reaction term $f(u,v)$ also equals zero when $u=0$. Therefore, if $u$ reaches zero, its time derivative cannot become negative, preventing $u$ from crossing to negative values. A similar argument applies to $v(x,t)$. When $v=0$, the diffusion term $d_{v}\Delta v$ vanishes. The reaction term $g(u,v)$ equals zero. Thus, $v$ cannot become negative once it is initially non-negative. Since the initial functions satisfy $u_{0}(x), v_{0}(x) \geq 0$ by Assumption 1, and the Neumann boundary conditions preserve non-negativity, we conclude that both $u(x,t)$ and $v(x,t)$ remain non-negative for all $t \in [0,\mathrm{T}_{\max})$.

Let $w=(u,v)$, then model (1.1) can be simplified to
\begin{equation}\left\{\begin{aligned}
&\frac{\partial w}{\partial t}=\nabla \cdot(a(w)\nabla w)+\Phi(w), & x \in \Omega,~t>0, \\
&\frac{\partial w}{\partial \nu}=0, & x \in \partial \Omega,~t>0, \\
&w(\cdot,0)=(u_{0},v_{0}), & x \in \Omega,
\end{aligned}\right.\end{equation}
where
\begin{equation*}
a(w)=\left(\begin{array}{cc}
d_{u} & \alpha u \\
0 & d_{v}
\end{array}\right), \quad \Phi(w)=\binom{f(u,v)}{g(u,v)}.
\end{equation*}
The diffusion matrix $a(w)$ exhibits a lower triangular structure. The second equation for $v$ depends only on its own diffusion coefficient $d_{v}$ and the reaction term $g(u,v)$, but not on the taxis terms involving $u$. This triangular structure enables a sequential solution approach, whereby the equation for $v$ is solved first, independently of the spatial derivatives of $u$, and its solution is then substituted into the equation for $u$. This structural property is crucial for applying Amann's theorem \cite{lv49}. The lower triangular form ensures that the system (2.1) is strongly parabolic, as the eigenvalues of $a(w)$ are $d_{u}$ and $d_{v}$, both positive. The reaction terms $\Phi(w)$ are smooth functions satisfying the required regularity conditions in $W^{1,p}(\Omega)$. Amann's theorem guarantees the existence of a unique local classical solution for such quasi-linear parabolic systems with triangular diffusion structure. Therefore, by Amann's theorem, there exists a maximal time $\mathrm{T}_{\max}>0$ such that model (1.1) admits a unique classical solution
\begin{equation*}
(u,v) \in \left[C\left(\left[0,\mathrm{T}_{\max}\right);W^{1,p}(\Omega)\right) \cap C^{2,1}\left(\overline{\Omega} \times \left(0,\mathrm{T}_{\max}\right)\right)\right]^{2}.
\end{equation*}
This completes the proof.
\end{proof}

\section{Turing bifurcation and Turing-Turing bifurcation}
\label{sec:3}

\subsection{Turing bifurcation analysis and Turing-Turing bifurcation analysis}

For model (1.1), in what follows, we let $\Omega=(0,\ell\pi), \ell>0$. Firstly, we analyze the existence and stability of the positive constant steady state for model (1.1). In fact, there exists a unique positive constant steady state $E_{*}:=(u_{*},v_{*})$ for model (1.1) if
\begin{equation*}
c>\frac{m_{1}}{b_{1}} \text{ and } r_{0}>\frac{(cb_{1}-m_{1})(1-\beta)}{b_{2}c},
\end{equation*}
where
\begin{equation}
u_{*}=\frac{1}{a}\left(r_{0}-\frac{(cb_{1}-m_{1})(1-\beta)}{b_{2}c}\right),~v_{*}=\frac{(cb_{1}-m_{1})(1-\beta)u_{*}}{b_{2}m_{1}}.
\end{equation}
Then, the linearized equation of model (1.1) at $E_{*}$ is given by
\begin{equation}
\left\{\begin{aligned}
&u_{t}=d_{u}\frac{\partial^{2}}{\partial x^{2}}u+\alpha u_{*}\frac{\partial^{2}}{\partial x^{2}}v+\left(-au_{*}+\frac{b_{1}(1-\beta)^{2}u_{*}v_{*}}{(b_{2}v_{*}+(1-\beta)u_{*})^{2}}\right)u-\frac{b_{1}(1-\beta)^{2}u_{*}^{2}}{(b_{2}v_{*}+(1-\beta)u_{*})^{2}}v, \\
&v_{t}=d_{v}\frac{\partial^{2}}{\partial x^{2}}v+\frac{cb_{1}b_{2}(1-\beta)v_{*}^{2}}{(b_{2}v_{*}+(1-\beta)u_{*})^{2}}u-\frac{cb_{1}b_{2}(1-\beta)u_{*}v_{*}}{(b_{2}v_{*}+(1-\beta)u_{*})^{2}}v, \\
&u_{x}(0,t)=v_{x}(0,t)=0, u_{x}(\ell\pi,t)=v_{x}(\ell\pi,t)=0,
\end{aligned}\right.
\end{equation}
which can be written as
\begin{equation}\begin{aligned}
\left(\begin{aligned}
u_{t} \\
v_{t}
\end{aligned}\right)&=\left(\begin{array}{cc}
d_{u} & \alpha u_{*} \\
0 & d_{v}
\end{array}\right)\left(\begin{aligned}
u_{xx} \\
v_{xx}
\end{aligned}\right)+\left(\begin{array}{cc}
-au_{*}+\frac{b_{1}(1-\beta)^{2}u_{*}v_{*}}{(b_{2}v_{*}+(1-\beta)u_{*})^{2}} & -\frac{b_{1}(1-\beta)^{2}u_{*}^{2}}{(b_{2}v_{*}+(1-\beta)u_{*})^{2}}  \\
\frac{cb_{1}b_{2}(1-\beta)v_{*}^{2}}{(b_{2}v_{*}+(1-\beta)u_{*})^{2}} & -\frac{cb_{1}b_{2}(1-\beta)u_{*}v_{*}}{(b_{2}v_{*}+(1-\beta)u_{*})^{2}}
\end{array}\right)\left(\begin{aligned}
u \\
v
\end{aligned}\right) \\
&:=D_{0}\left(\begin{aligned}
u_{xx} \\
v_{xx}
\end{aligned}\right)+A_{1}\left(\begin{aligned}
u \\
v
\end{aligned}\right).
\end{aligned}\end{equation}
Let $\left\{\mu_{n}=n^{2}/\ell^{2}: n \in \mathbb{N}_{0}\right\}$ be the eigenvalues of operator $-\frac{\partial^{2}}{\partial x^{2}}$ on $(0,\ell\pi)$ subject to Neumann boundary conditions. And for the sake of convenience, denote
\begin{equation*}
\varpi:=\frac{m_{1}(cb_{1}-m_{1})}{c^{2}b_{1}b_{2}}>0,
\end{equation*}
then the characteristic equations of (3.2) read
\begin{equation}
\Gamma_{n}(\lambda,\alpha):=\lambda^{2}+(p_{n}+s_{n})\lambda+(\sigma_{n}+q_{n}(\alpha))=0,~n \in \mathbb{N}_{0},
\end{equation}
where
\begin{equation}\begin{aligned}
&p_{n}=\frac{n^{2}}{\ell^{2}}(d_{u}+d_{v})+(au_{*}-(1-\beta)\varpi),~s_{n}=cb_{2}\varpi, \\
&\sigma_{n}=\frac{n^{4}}{\ell^{4}}d_{u}d_{v}+\frac{n^{2}}{\ell^{2}}d_{v}(au_{*}-(1-\beta)\varpi), \\
&q_{n}(\alpha)=\frac{n^{2}}{\ell^{2}}cb_{2}v_{*}\varpi\alpha+cb_{2}\varpi(\frac{n^{2}}{\ell^{2}}d_{u}+au_{*}).
\end{aligned}\end{equation}
Denote $\operatorname{DET}_{n}:=\sigma_{n}+q_{n}(\alpha)$, $\operatorname{TR}_{n}:=-(p_{n}+s_{n})$ for $n \in \mathbb{N}_{0}$, then from (3.5), we have
\begin{equation}\begin{aligned}
&\operatorname{DET}_{n}=\frac{n^{4}}{\ell^{4}}d_{u}d_{v}+\frac{n^{2}}{\ell^{2}}\left(d_{v}(au_{*}-(1-\beta)\varpi)+d_{u}cb_{2}\varpi+\alpha cb_{2}v_{*}\varpi\right)+acb_{2}u_{*}\varpi, \\
&\operatorname{TR}_{n}=-\frac{n^{2}}{\ell^{2}}(d_{u}+d_{v})-(au_{*}-(1-\beta)\varpi)-cb_{2}\varpi.
\end{aligned}\end{equation}
If
\begin{equation}
c>\frac{m_{1}}{b_{1}}
\end{equation}
and
\begin{equation}\begin{aligned}
&r_{0}>\underline{r_{0}} \\
&\triangleq \max \left\{\frac{(cb_{1}-m_{1})(1-\beta)}{b_{2}c},\frac{((cb_{1}+m_{1})(1-\beta)-cb_{2}m_{1})(cb_{1}-m_{1})}{c^{2}b_{1}b_{2}}\right\},
\end{aligned}\end{equation}
then $\operatorname{TR}_{0}<0$, hence $E_{*}$ is locally asymptotically stable for local ODE model since $\operatorname{DET}_{0}>0$. So in the following discussions, we always assume that (3.7) and (3.8) are satisfied.

Next, we focus on discussing the occurrence of Turing bifurcation and Turing-Turing bifurcation for model (1.1). For convenience, let us denote
\begin{equation}
\overline{r_{0}}:=\frac{(cb_{1}-m_{1})(cb_{1}+m_{1})(1-\beta)}{c^{2}b_{1}b_{2}}>0,
\end{equation}
and it is obvious that $\overline{r_{0}}>\underline{r_{0}}$. When $r_{0} \geqslant \overline{r_{0}}$, then $au_{*}-(1-\beta)\varpi \geqslant 0$, hence $\operatorname{TR}_{n}<0$, $\operatorname{DET}_{n}>0$ for $n \in \mathbb{N}_{0}$, which indicates that $E_{*}$ is locally asymptotically stable. When $r_{0}<\overline{r_{0}}$, then $au_{*}-(1-\beta)\varpi<0$. According to (3.6), $\operatorname{TR}_{n}<0$ for $n \in \mathbb{N}_{0}$ also holds. Whereas, there exists some $n \in \mathbb{N}$ such that $\operatorname{DET}_{n}<0$, meaning that Turing bifurcation can occur for model (1.1). Then, for any fixed $d_{v}>0$, define
\begin{equation}
\alpha(n,d_{u}):=-\frac{d_{u}d_{v}\frac{n^{4}}{\ell^{4}}+\left(d_{v}(au_{*}-(1-\beta)\varpi)+d_{u}cb_{2}\varpi\right)\frac{n^{2}}{\ell^{2}}+acb_{2}u_{*}\varpi}{\frac{n^{2}}{\ell^{2}}cb_{2}v_{*}\varpi}, d_{u}>0, n \in \Lambda,
\end{equation}
where
\begin{equation}
\Lambda:=\left\{n \in \mathbb{N}: n>\widehat{n} \triangleq \ell\sqrt{\frac{acb_{2}u_{*}\varpi}{d_{v}((1-\beta)\varpi-au_{*})}}\right\}.
\end{equation}
Also, denote
\begin{equation*}
\overline{\alpha}(n):=\lim_{d_{u}\rightarrow 0^{+}}\alpha(n,d_{u})=-\frac{au_{*}\ell^{2}}{n^{2}v_{*}}+\frac{d_{v}((1-\beta)\varpi-au_{*})}{cb_{2}v_{*}\varpi}.
\end{equation*}
To describe the critical conditions for the instability of the positive constant steady state, we first investigate the monotonicity of $\overline{\alpha}(n)$ with respect to the wave number $n$.
\begin{lemma}\label{lem:3.1}
For any $a, b_{1}, b_{2}, \beta, d_{v}, m_{1}>0$, provided that $c>\frac{m_{1}}{b_{1}}$, $\underline{r_{0}}<r_{0}<\overline{r_{0}}$, then $\overline{\alpha}(n)>0$ is monotonically increasing in $n$ for $n \in \Lambda$.
\end{lemma}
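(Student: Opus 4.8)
The plan is to regard $\overline{\alpha}$ as a smooth function of a continuous variable $n>0$ and to establish two facts from which the lemma follows at once: that $\overline{\alpha}$ is strictly increasing, and that its unique zero is precisely the threshold $\widehat{n}$ entering the definition of $\Lambda$. Positivity on $\Lambda$ is then immediate from the monotonicity, since every $n\in\Lambda$ satisfies $n>\widehat{n}$.

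First I would extract the sign information contained in the standing hypothesis $\underline{r_0}<r_0<\overline{r_0}$. Using the explicit formula $au_*=r_0-\frac{(cb_1-m_1)(1-\beta)}{b_2 c}$ together with the definition of $\varpi$, a short algebraic computation gives $au_*-(1-\beta)\varpi=r_0-\overline{r_0}$, whence $(1-\beta)\varpi-au_*=\overline{r_0}-r_0>0$. This is exactly the quantity appearing under the radical in $\widehat{n}$, so $\widehat{n}$ is a well-defined positive real number; it also shows that the $n$-independent term of $\overline{\alpha}$, namely $\frac{d_v((1-\beta)\varpi-au_*)}{cb_2 v_*\varpi}$, is positive.

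Next, for the monotonicity I would write $\overline{\alpha}(n)=-\frac{au_*\ell^2}{v_*}\,\frac{1}{n^2}+K$, where the coefficient $\frac{au_*\ell^2}{v_*}>0$ and $K$ is the $n$-independent term above. Differentiating, $\overline{\alpha}'(n)=\frac{2au_*\ell^2}{v_*\,n^3}>0$ for all $n>0$, so $\overline{\alpha}$ is strictly increasing; restricting to the consecutive integers of $\Lambda$ gives the claimed monotonicity in $n$. Finally, to locate the zero I would substitute $\widehat{n}^{\,2}=\ell^2\,\frac{acb_2u_*\varpi}{d_v((1-\beta)\varpi-au_*)}$ into $\overline{\alpha}$; the two terms cancel exactly, yielding $\overline{\alpha}(\widehat{n})=0$. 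This is no accident, since $\widehat{n}$ was introduced precisely as the root of the numerator of $\lim_{d_u\to0^+}\alpha(n,d_u)$. Combining the two facts, any $n\in\Lambda$ obeys $n>\widehat{n}$, and strict monotonicity forces $\overline{\alpha}(n)>\overline{\alpha}(\widehat{n})=0$, as required.

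I do not expect a genuine obstacle here: the statement reduces to a one-variable calculus argument once the algebra is in place. The only step demanding care is the verification that $r_0<\overline{r_0}$ is equivalent to $(1-\beta)\varpi-au_*>0$, since this single inequality simultaneously makes $\widehat{n}$ real and identifies it as the sign-change location of $\overline{\alpha}$; everything else is routine.
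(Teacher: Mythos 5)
Your proposal is correct. The paper in fact states Lemma \ref{lem:3.1} without proof (treating it as a direct calculation), and your argument is precisely the computation it implicitly relies on: the identity $au_{*}-(1-\beta)\varpi=r_{0}-\overline{r_{0}}$ is exactly what the paper invokes in the surrounding text when it asserts that $r_{0}<\overline{r_{0}}$ forces $au_{*}-(1-\beta)\varpi<0$, the decomposition $\overline{\alpha}(n)=-\frac{au_{*}\ell^{2}}{v_{*}}\,n^{-2}+K$ with $au_{*}>0$, $K>0$ gives the monotonicity, and the cancellation $\overline{\alpha}(\widehat{n})=0$ is built into the definition of $\Lambda$ in (3.11). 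Nothing is missing; positivity on $\Lambda$ follows from $n>\widehat{n}$ and strict monotonicity exactly as you argue.
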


A direct calculation from $\alpha(n,d_{u})=0$ yields
\begin{equation}
d_{u}^{n}:=-\frac{d_{v}(au_{*}-(1-\beta)\varpi)\frac{n^{2}}{\ell^{2}}+acb_{2}u_{*}\varpi}{\frac{n^{2}}{\ell^{2}}(d_{v}\frac{n^{2}}{\ell^{2}}+cb_{2}\varpi)}, n \in \Lambda,
\end{equation}
which is a critical value of $d_{u}$ when $E_{*}$ is destabilized. For any $d_{v}>0$, let
\begin{equation}
\overline{n}:=\left\{\begin{aligned}
\left\lfloor n_{*}\right\rfloor+1, & \text{ for } d_{u}^{\left\lfloor n_{*}\right\rfloor} \leqslant d_{u}^{\left\lfloor n_{*}\right\rfloor+1}, \\
\left\lfloor n_{*}\right\rfloor, & \text{ for } d_{u}^{\left\lfloor n_{*}\right\rfloor}>d_{u}^{\left\lfloor n_{*}\right\rfloor+1},
\end{aligned}\right.
\end{equation}
then $\overline{n}$ is the critical wave number of the nonconstant steady states arising from $E_{*}$ through Turing instability, where
\begin{equation}
n_{*}=\ell\sqrt{\frac{cb_{2}\varpi(au_{*}+\sqrt{au_{*}(1-\beta)\varpi})}{d_{v}((1-\beta)\varpi-au_{*})}},
\end{equation}
and $\lfloor\cdot\rfloor$ is the floor function.

The next lemma concerns the monotonicity of $d_{u}^{n}$ in $n$ and $\alpha(n,d_{u})$ in $d_{u}$, respectively.
\begin{lemma}\label{lem:3.2}
For any $a, b_{1}, b_{2}, \beta, d_{v}, m_{1}>0$, provided that $c>\frac{m_{1}}{b_{1}}$, $\underline{r_{0}}<r_{0}<\overline{r_{0}}$, then

(i) $d_{u}^{n}$ is monotonically increasing with respect to $n \in \mathbb{N}$ for $\widehat{n}<n<\overline{n}$, and monotonically decreasing with respect to $n \in \mathbb{N}$ for $n \geqslant \overline{n}$;

(ii) for any $n \in \Lambda$, $\alpha(n,d_{u})$ is linear and monotonically decreasing with respect to $d_{u}$ for $d_{u}>0$. Particularly, $\alpha(n,d_{u})>0$ for $0<d_{u}<d_{u}^{n}, n \in \Lambda$.
\end{lemma}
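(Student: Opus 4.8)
The plan is to dispatch part (ii) first, since it is essentially algebraic, and then to invest the real effort in part (i), where the discrete monotonicity is the delicate point. For part (ii), I would collect the $d_{u}$-dependent terms in the numerator of (3.11) and rewrite
\begin{equation*}
\alpha(n,d_{u})=-\frac{d_{v}\frac{n^{2}}{\ell^{2}}+cb_{2}\varpi}{cb_{2}v_{*}\varpi}\,d_{u}-\frac{d_{v}(au_{*}-(1-\beta)\varpi)\frac{n^{2}}{\ell^{2}}+acb_{2}u_{*}\varpi}{\frac{n^{2}}{\ell^{2}}cb_{2}v_{*}\varpi}.
\end{equation*}
This exhibits $\alpha(n,d_{u})$ as an affine function of $d_{u}$ whose slope is manifestly negative for every $n\in\Lambda$ (all parameters being positive), giving linearity and strict monotone decrease. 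Since (3.13) was obtained precisely by solving $\alpha(n,d_{u})=0$, the value $d_{u}^{n}$ is the unique root of this line; a line with negative slope is positive to the left of its root, so $\alpha(n,d_{u})>0$ for $0<d_{u}<d_{u}^{n}$. It remains to note that $d_{u}^{n}>0$ for $n\in\Lambda$: using $au_{*}-(1-\beta)\varpi<0$ (valid since $r_{0}<\overline{r_{0}}$), the numerator of (3.13) is positive exactly when $\frac{n^{2}}{\ell^{2}}>\frac{acb_{2}u_{*}\varpi}{d_{v}((1-\beta)\varpi-au_{*})}=\widehat{n}^{2}/\ell^{2}$, i.e.\ precisely when $n>\widehat{n}$, which is the defining condition of $\Lambda$ in (3.12).

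For part (i), I would treat $d_{u}^{n}$ as a smooth function of the continuous variable $z:=n^{2}/\ell^{2}>0$. Writing $P:=(1-\beta)\varpi-au_{*}>0$, $Q:=acb_{2}u_{*}\varpi>0$ and $R:=cb_{2}\varpi>0$, (3.13) becomes
\begin{equation*}
F(z):=\frac{d_{v}Pz-Q}{d_{v}z^{2}+Rz},
\end{equation*}
whose denominator is strictly positive on $z>0$, so $F$ has no poles there. Differentiating, the sign of $F'(z)$ equals the sign of the numerator $-d_{v}^{2}Pz^{2}+2d_{v}Qz+QR$, a downward-opening parabola in $z$ with a negative root and a single positive root $z_{*}=\bigl(Q+\sqrt{Q(Q+PR)}\bigr)/(d_{v}P)$. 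A short computation using $Q+PR=cb_{2}(1-\beta)\varpi^{2}$ shows $z_{*}=n_{*}^{2}/\ell^{2}$ with $n_{*}$ exactly as in (3.15). Hence $F'>0$ for $0<z<z_{*}$ and $F'<0$ for $z>z_{*}$, and since $z=n^{2}/\ell^{2}$ is increasing in $n$, the continuous map $n\mapsto d_{u}^{n}$ strictly increases on $(\widehat{n},n_{*})$ and strictly decreases on $(n_{*},\infty)$; note $n_{*}>\widehat{n}$ because $\sqrt{Q(Q+PR)}>0$, so the increasing regime is nonempty.

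The main obstacle, and the last step, is transferring this continuous picture to the integer wave numbers and matching it to the two-case definition (3.14) of $\overline{n}$. Because $F$ has a unique interior maximum at $n_{*}$ and is strictly unimodal, the maximizer of $d_{u}^{n}$ over integers $n$ is attained at one of the two integers bracketing $n_{*}$, namely $\lfloor n_{*}\rfloor$ and $\lfloor n_{*}\rfloor+1$; comparing $d_{u}^{\lfloor n_{*}\rfloor}$ with $d_{u}^{\lfloor n_{*}\rfloor+1}$ selects this discrete argmax, which is exactly the rule (3.14) defining $\overline{n}$. I would then argue the two cases separately. If $d_{u}^{\lfloor n_{*}\rfloor}\leqslant d_{u}^{\lfloor n_{*}\rfloor+1}$ then $\overline{n}=\lfloor n_{*}\rfloor+1$; every integer $n$ with $\widehat{n}<n<\overline{n}$ satisfies $n\leqslant\lfloor n_{*}\rfloor<n_{*}$ and thus lies in the strictly increasing regime of $F$, while every integer $n\geqslant\overline{n}=\lfloor n_{*}\rfloor+1>n_{*}$ lies in the strictly decreasing regime. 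If instead $d_{u}^{\lfloor n_{*}\rfloor}>d_{u}^{\lfloor n_{*}\rfloor+1}$ then $\overline{n}=\lfloor n_{*}\rfloor$; for $n\geqslant\overline{n}$ the single step from $\lfloor n_{*}\rfloor$ to $\lfloor n_{*}\rfloor+1$ is decreasing by the case hypothesis, and all subsequent integers exceed $n_{*}$ and hence sit in the decreasing regime of $F$, while integers $n<\overline{n}=\lfloor n_{*}\rfloor$ are below $n_{*}$ and lie in the increasing regime. In both cases the sequence $\{d_{u}^{n}\}$ increases for $\widehat{n}<n<\overline{n}$ and decreases for $n\geqslant\overline{n}$, which is the assertion of (i). The only genuinely fiddly point is this bookkeeping at the two integers straddling $n_{*}$; the continuous unimodality established above reduces it to the single comparison already encoded in (3.14).
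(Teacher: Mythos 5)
Your proof is correct and follows essentially the same route as the paper's: part (ii) via rewriting $\alpha(n,d_{u})$ as an affine function of $d_{u}$ with negative slope, and part (i) via unimodality of the continuous function $\Gamma(x)$ of $x=n^{2}/\ell^{2}$ with maximum at $x^{*}=n_{*}^{2}/\ell^{2}$, followed by transfer to integer wave numbers straddling $n_{*}$. If anything your version is more complete than the paper's, which compresses the derivative computation identifying $x^{*}$ into the phrase ``by a direct calculation'' and only treats the case $\overline{n}=\lfloor n_{*}\rfloor+1$ explicitly, whereas you verify the identity $Q+PR=cb_{2}(1-\beta)\varpi^{2}$, check $d_{u}^{n}>0$ on $\Lambda$, and handle both cases of $\overline{n}$; note only that your equation references are shifted by one relative to the paper's numbering (e.g., $\alpha(n,d_{u})$ is (3.10), $d_{u}^{n}$ is (3.12), $n_{*}$ is (3.14)).
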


\begin{proof}
We only prove the case of $d_{u}^{\left\lfloor n_{\star}\right\rfloor} \leqslant d_{u}^{\left\lfloor n_{\star}\right\rfloor+1},\overline{n}=\left\lfloor n_{*}\right\rfloor+1$, and the other cases can be established using similar arguments. For part (i), by (3.12), define $\Gamma(x)$ by
\begin{equation*}
\Gamma(x):=-\frac{d_{v}(au_{*}-(1-\beta)\varpi)x+acb_{2}u_{*}\varpi}{x(d_{v}x+cb_{2}\varpi)}, x>0.
\end{equation*}
So by a direct calculation, there exists $x^{*}=\frac{n_{*}^{2}}{\ell^{2}}$ satisfying that $\Gamma(x)$ is increasing in $x$ on $(0,x^{*})$, and decreasing in $x$ on $(x^{*},\infty)$. And by (3.11) and (3.14), it follows that
\begin{equation*}
n_{*}^{2}-\widehat{n}^{2}=\ell^{2}\frac{cb_{2}\varpi\sqrt{au_{*}(1-\beta)\varpi}}{d_{v}((1-\beta)\varpi-au_{*})}>0,
\end{equation*}
that is, $n_{*}>\widehat{n}$, thus $\left\lfloor n_{*}\right\rfloor+1\geqslant n_{*}>\widehat{n}$, which implies that $d_{u}^{n}$ is decreasing for $\left\lfloor n_{*}\right\rfloor+1 \leqslant n \in \mathbb{N}$. On the other hand, for $n \in\left\{n \in \mathbb{N}: \widehat{n}<n<\left\lfloor n_{*}\right\rfloor+1\right\}$, $d_{u}^{n}$ is monotonically increasing with respect to $n$. For part (ii), for any $n \in \Lambda$, we rewrite the expression of $\alpha(n,d_{u})$ in (3.10) as follows
\begin{equation}
\alpha(n,d_{u}):=-\frac{d_{v}\frac{n^{2}}{\ell^{2}}+cb_{2}\varpi}{cb_{2}v_{*}\varpi}d_{u}-\frac{d_{v}(au_{*}-(1-\beta)\varpi)\frac{n^{2}}{\ell^{2}}+acb_{2}u_{*}\varpi}{\frac{n^{2}}{\ell^{2}}cb_{2}v_{*}\varpi}, d_{u}>0,
\end{equation}
then $\alpha(n,d_{u})$ is a linear function of $d_{u}$, thus for $d_{u}>0$, $\alpha(n,d_{u})$ is monotonically decreasing with respect to $d_{u}$, implying that $\alpha(n,d_{u})>0$ for $0<d_{u}<d_{u}^{n}, n \in \Lambda$.
\end{proof}

To this end, based on (3.15), and for any $d_{v}>0$, we define
\begin{equation}
\alpha_{*}(n,d_{u}):=\max\left\{\alpha(n,d_{u}),0\right\}, n \in \Lambda, 0<d_{u}<d_{u}^{\overline{n}}.
\end{equation}
It is well known that the first critical value of the Turing bifurcation determines the stability of the positive constant steady state \cite{lv33}. Therefore, to describe the critical region where the stability of $E_{*}$ changes in the $(d_{u},\alpha)$-plane, we next discuss the intersections of $\alpha=\alpha_{*}(n,d_{u})$ for $n \in \Lambda, 0<d_{u}<d_{u}^{\overline{n}}$. By solving $\alpha(n,d_{u})=\alpha(n+1,d_{u})$, we have the following Lemma.
\begin{lemma}\label{lem:3.3}
For any $a, b_{1}, b_{2}, \beta, d_{v}, m_{1}>0$, provided that $c>\frac{m_{1}}{b_{1}}$, $\underline{r_{0}}<r_{0}<\overline{r_{0}}$, then for any $n \in \Lambda$, the equation
\begin{equation*}
\alpha_{*}(n,d_{u})=\alpha_{*}(n+1,d_{u}),~0<d_{u}<d_{u}^{\overline{n}}
\end{equation*}
has a unique root $d_{u}^{n,n+1} \in (0,d_{u}^{n+1})$, which is denoted by
\begin{equation}
d_{u}^{n,n+1}:=\frac{acb_{2}u_{*}\varpi}{d_{v}\frac{n^{2}(n+1)^{2}}{\ell^{4}}}.
\end{equation}
\end{lemma}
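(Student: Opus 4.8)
The plan is to reduce the stated equation, on the region where both Turing curves stay positive, to the linear equation $\alpha(n,d_u)=\alpha(n+1,d_u)$, read off its unique root, and then treat the clipping by $\max\{\cdot,0\}$ and the location of the root separately.

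First I would insert the explicit linear form (3.15) for both $\alpha(n,d_u)$ and $\alpha(n+1,d_u)$, abbreviate $\mu_n=n^2/\ell^2$, and clear the common positive factor $cb_2v_*\varpi$. The decisive observation is that two blocks of terms cancel identically: the coefficient $cb_2\varpi$ multiplying $d_u$ is independent of the index, and the $\mu$-independent constant $d_v(au_*-(1-\beta)\varpi)$ arising from the second fraction is likewise common to $n$ and $n+1$. What survives is
\begin{equation*}
d_v\mu_n d_u+\frac{acb_2u_*\varpi}{\mu_n}=d_v\mu_{n+1}d_u+\frac{acb_2u_*\varpi}{\mu_{n+1}},
\end{equation*}
and, factoring the nonzero difference $\mu_{n+1}-\mu_n$ out of both sides, this collapses to $d_vd_u=acb_2u_*\varpi/(\mu_n\mu_{n+1})$, which is exactly (3.17). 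Since the reduced equation is linear in $d_u$ its root is unique, and positivity of $d_u^{n,n+1}$ is immediate from $a,c,b_2,u_*,\varpi,d_v>0$.

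Next I would confirm the root lies below $d_u^{n+1}$ (note $n+1\in\Lambda$, so $d_u^{n+1}>0$). By Lemma \ref{lem:3.2}(ii) the map $d_u\mapsto\alpha(n+1,d_u)$ is strictly decreasing with its unique zero at $d_u^{n+1}$, so $d_u^{n,n+1}<d_u^{n+1}$ is equivalent to the sign condition $\alpha(n+1,d_u^{n,n+1})>0$; equivalently, the common crossing value $\alpha^{*}=\alpha(n,d_u^{n,n+1})=\alpha(n+1,d_u^{n,n+1})$ is positive. Substituting (3.17) into (3.15) turns this into a polynomial inequality in $\mu_n,\mu_{n+1}$ and the threshold $\widehat n$ from (3.11), which I expect to settle using the monotonicity dichotomy of Lemma \ref{lem:3.2}(i) — the crossings that bound the instability region occur for wave numbers at or beyond the critical $\overline n$, where $d_u^{n}$ is decreasing and hence $d_u^{n+1}<d_u^{n}$. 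This same positivity of $\alpha^{*}$ is what upgrades the root of the linear equation to a genuine root of the clipped equation (3.16): it forces $\alpha_*(n,d_u^{n,n+1})=\alpha(n,d_u^{n,n+1})$ and $\alpha_*(n+1,d_u^{n,n+1})=\alpha(n+1,d_u^{n,n+1})$, so the two $\max\{\cdot,0\}$ expressions really do agree there.

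For uniqueness on $0<d_u<d_u^{\overline n}$ I would run a short sign analysis at an arbitrary coincidence point: if both $\alpha(n,\cdot)$ and $\alpha(n+1,\cdot)$ are positive there, then $\alpha_*(n,\cdot)=\alpha_*(n+1,\cdot)$ forces $\alpha(n,\cdot)=\alpha(n+1,\cdot)$ and hence $d_u=d_u^{n,n+1}$; the mixed case with exactly one positive is impossible, since then one side is $0$ and the other strictly positive. The main obstacle is precisely the middle step — pinning down the sign of $\alpha^{*}$ and, in tandem, excluding the degenerate regime in which both clipped curves vanish on an entire subinterval of $(0,d_u^{\overline n})$. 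This is where the position of $n$ relative to $\overline n$, together with the comparisons among $d_u^{n}$, $d_u^{n+1}$ and $d_u^{\overline n}$ furnished by Lemma \ref{lem:3.2}, must be deployed with care.
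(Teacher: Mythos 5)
Your first paragraph is, in substance, the paper's entire proof: the paper justifies Lemma \ref{lem:3.3} solely by the phrase ``by solving $\alpha(n,d_{u})=\alpha(n+1,d_{u})$'', and the cancellation you carry out (the index-independent slope contribution $cb_{2}\varpi$ and the index-independent constant $d_{v}(au_{*}-(1-\beta)\varpi)$ drop out, leaving $d_{v}\mu_{n}d_{u}+C/\mu_{n}=d_{v}\mu_{n+1}d_{u}+C/\mu_{n+1}$ with $C:=acb_{2}u_{*}\varpi$, $\mu_{n}:=n^{2}/\ell^{2}$) is correct and yields (3.17). The problem is the second half of your proposal, which by your own admission you leave open: the sign of the common value $\alpha^{*}=\alpha(n,d_{u}^{n,n+1})=\alpha(n+1,d_{u}^{n,n+1})$, which you correctly identify as equivalent to the containment $d_{u}^{n,n+1}<d_{u}^{n+1}$, is never actually determined. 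This is a genuine gap, and it is not one that more care can close in the stated generality, because the containment is false for part of the range $n\in\Lambda$.

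Concretely, for $\widehat{n}<n<\overline{n}$, Lemma \ref{lem:3.2}(i) together with the definition (3.13) of $\overline{n}$ gives $d_{u}^{n}\leqslant d_{u}^{n+1}$; since $\alpha(n+1,0)=\overline{\alpha}(n+1)>\overline{\alpha}(n)=\alpha(n,0)$ by Lemma \ref{lem:3.1} and the two lines cross exactly once, at $d_{u}=d_{u}^{n}$ the line $\alpha(n+1,\cdot)$ still lies above $\alpha(n,\cdot)=0$, so the crossing occurs below the axis and satisfies $d_{u}^{n,n+1}>d_{u}^{n+1}$, the opposite of what the lemma asserts. The paper's own Section 5 parameters exhibit this: there $\widehat{n}\doteq 1.83$, so $n=2\in\Lambda$, while $\overline{n}=3$, and one computes $d_{u}^{2,3}=acb_{2}u_{*}\varpi\ell^{4}/(36d_{v})\doteq 0.101$ whereas $d_{u}^{3}\doteq 0.054$; moreover on $(0,d_{u}^{\overline{n}})$ the clipped curves $\alpha_{*}(2,\cdot)$ and $\alpha_{*}(3,\cdot)$ never meet, so the equation has no root at all there. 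Hence the lemma, and any proof of it, requires the restriction $n\geqslant\overline{n}$ (which is the only range the paper actually uses in (3.18), Lemma \ref{lem:3.4} and Theorem \ref{thm:3.5}). In that regime your sign criterion does close the argument: Lemma \ref{lem:3.2}(i) gives $d_{u}^{n+1}<d_{u}^{n}$, so at $d_{u}=d_{u}^{n+1}$ the line $\alpha(n,\cdot)$ is still positive while $\alpha(n+1,\cdot)$ vanishes, forcing the unique crossing into $(0,d_{u}^{n+1})$ and above the axis, where the $\max\{\cdot,0\}$ clipping is inactive. Even then, note that for $n>\overline{n}$ both clipped curves vanish identically on the nonempty interval $[d_{u}^{n},d_{u}^{\overline{n}})$, so the ``degenerate regime'' you flagged is genuinely present rather than excludable, and literal uniqueness for the clipped equation holds only for $n=\overline{n}$; for $n>\overline{n}$ uniqueness must be read as a statement about the unclipped lines.
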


Synthesizing the above discussions, based on (3.16), and for $n \in \Lambda$, $0<d_{u}<d_{u}^{\overline{n}}$, we define
\begin{equation}
\alpha_{*}(d_{u}):=\alpha_{*}(n,d_{u}), d_{u} \in \left[d_{u}^{n,n+1},d_{u}^{n-1,n}\right), n \geqslant \overline{n}
\end{equation}
with $\overline{n}$ is defined as in (3.13). Then $\alpha=\alpha_{*}(d_{u})$, $0<d_{u}<d_{u}^{\overline{n}}$ is called the first Turing bifurcation curve, which is the critical curve for Turing instability of $E_{*}$.

\begin{lemma}\label{lem:3.4}
For any $a, b_{1}, b_{2}, \beta, d_{v}, m_{1}>0$, provided that $c>\frac{m_{1}}{b_{1}}$, $\underline{r_{0}}<r_{0}<\overline{r_{0}}$, then for any $d_{u} \in (0,d_{u}^{\overline{n}})$, there must exist some integer $n_{1} \geqslant \overline{n}$ such that $d_{u} \in \left[d_{u}^{n_{1},n_{1}+1},d_{u}^{n_{1}-1,n_{1}}\right)$. Specifically,

(i) if $d_{u} \in (d_{u}^{n_{1},n_{1}+1},d_{u}^{n_{1}-1,n_{1}})$ and $\alpha=\alpha_{*}(d_{u})$, 0 is a simple root of characteristic equation (3.4) with $n=n_{1}$, and the other roots of (3.4) have strictly negative real parts. Moreover, let $\lambda_{1}(n_{1},\alpha)$ be the root of (3.4) satisfying $\lambda_{1}(n_{1},\alpha_{*}(d_{u}))=0$, then
\begin{equation}
\frac{\mathrm{d}\lambda_{1}(n_{1},\alpha_{*}(d_{u}))}{\mathrm{d}\alpha}=\frac{-\frac{n_{1}^{2}}{\ell^{2}}cb_{2}v_{*}\varpi}{p_{n_{1}}+s_{n_{1}}}<0.
\end{equation}

(ii) if $d_{u}=d_{u}^{n_{1},n_{1}+1}$ and $\alpha=\alpha_{*}(d_{u}^{n_{1},n_{1}+1})$, then 0 is a simple root of characteristic equation (3.4) for both $n_{1}$ and $n_{1}+1$, and the other roots of (3.4) have strictly negative real parts.
\end{lemma}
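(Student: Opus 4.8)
The plan is to reduce the full spectral analysis of (3.4) to tracking the sign of $\operatorname{DET}_n=\sigma_n+q_n(\alpha)$ across modes, using that $\operatorname{TR}_n=-(p_n+s_n)<0$ for all $n\in\mathbb{N}_0$ has already been secured under (3.7)--(3.8). For a fixed $n$ the two roots of $\Gamma_n(\lambda,\alpha)=0$ have sum $-(p_n+s_n)<0$ and product $\operatorname{DET}_n$; hence if $\operatorname{DET}_n>0$ both roots have strictly negative real part, if $\operatorname{DET}_n=0$ one root is $0$ and the other equals $-(p_n+s_n)<0$, and if $\operatorname{DET}_n<0$ there is one simple positive and one negative root. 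In particular any zero root of (3.4) is automatically simple, and the stability of all remaining modes is equivalent to $\operatorname{DET}_n>0$. The existence of $n_1$ I would obtain directly from (3.18): since $d_u^{n,n+1}=acb_2u_*\varpi\ell^4/(d_vn^2(n+1)^2)$ is strictly decreasing in $n$, the half-open intervals $[d_u^{n,n+1},d_u^{n-1,n})$ with $n\geqslant\overline{n}$ are consecutive and, by the construction (3.19) of $\alpha_*(d_u)$, exhaust $(0,d_u^{\overline{n}})$, so each $d_u$ lies in exactly one of them.

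The heart of the matter is to show that at $\alpha=\alpha_*(d_u)$ the only critical mode is $n_1$ (and $n_1,n_1+1$ at the shared endpoint). Because $\partial\operatorname{DET}_n/\partial\alpha=\frac{n^2}{\ell^2}cb_2v_*\varpi>0$, the quantity $\operatorname{DET}_n$ is strictly increasing and affine in $\alpha$ and vanishes exactly at $\alpha=\alpha(n,d_u)$, so that $\operatorname{DET}_n(\alpha)>0$ precisely when $\alpha>\alpha(n,d_u)$. By (3.19) we have $\alpha_*(d_u)=\alpha(n_1,d_u)$ on the interval indexed by $n_1$, which forces $\operatorname{DET}_{n_1}=0$ and thus a simple zero root at $n=n_1$. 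It then remains to prove the strict upper-envelope inequality $\alpha(n_1,d_u)>\alpha(n,d_u)$ for every $n\neq n_1$ in the open interval: this yields $\alpha_*(d_u)>\alpha(n,d_u)$ and hence $\operatorname{DET}_n>0$ for all $n\neq n_1$. This is exactly what Lemmas \ref{lem:3.1}--\ref{lem:3.3} furnish, Lemma \ref{lem:3.2}(i) giving the unimodality of $d_u^n$ about $\overline{n}$, Lemma \ref{lem:3.1} the monotonicity of the limiting curve $\overline{\alpha}(n)$, and Lemma \ref{lem:3.3} pinning the pairwise crossings at $d_u^{n,n+1}$, so that $\alpha(n_1,d_u)$ is the pointwise maximum over $n\in\Lambda$. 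Modes with $n\notin\Lambda$ (in particular $n=0$, where $\operatorname{DET}_0=acb_2u_*\varpi>0$) satisfy $\alpha(n,d_u)<\overline{\alpha}(n)\leqslant0<\alpha_*(d_u)$ by Lemma \ref{lem:3.1} together with the monotonicity in $d_u$ of Lemma \ref{lem:3.2}(ii), hence are stable as well, which closes the strict-inequality verification.

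For the transversality condition in part (i), I would fix the root branch $\lambda_1(n_1,\alpha)$ with $\lambda_1(n_1,\alpha_*(d_u))=0$ and differentiate $\Gamma_{n_1}(\lambda_1,\alpha)\equiv0$ implicitly in $\alpha$; the simplicity of the zero root makes $\partial_\lambda\Gamma_{n_1}=2\lambda_1+(p_{n_1}+s_{n_1})$ nonzero at $\lambda_1=0$, where it equals $p_{n_1}+s_{n_1}>0$. Since $\partial_\alpha\Gamma_{n_1}=\partial_\alpha q_{n_1}=\frac{n_1^2}{\ell^2}cb_2v_*\varpi$, this gives $\mathrm{d}\lambda_1/\mathrm{d}\alpha=-\bigl(\tfrac{n_1^2}{\ell^2}cb_2v_*\varpi\bigr)/(p_{n_1}+s_{n_1})<0$, which is (3.20). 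Part (ii) runs on the same template at $d_u=d_u^{n_1,n_1+1}$: Lemma \ref{lem:3.3} gives $\alpha(n_1,d_u)=\alpha(n_1+1,d_u)=\alpha_*(d_u)$, so $\operatorname{DET}_{n_1}$ and $\operatorname{DET}_{n_1+1}$ vanish simultaneously and each contributes a simple zero root, while the strict separation from all other modes persists by the same envelope analysis.

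The step I expect to be the main obstacle is the strict upper-envelope comparison $\alpha(n_1,d_u)>\alpha(n,d_u)$ for all $n\neq n_1$ throughout the open interval. Although Lemmas \ref{lem:3.1}--\ref{lem:3.3} package the needed monotonicity and crossing data, combining them into one clean inequality requires care on three fronts: treating the two branches of $d_u^n$ on either side of $\overline{n}$ consistently, disposing of the low modes $n\notin\Lambda$ where $\alpha(n,d_u)$ need not be positive, and confirming that the piecewise definition (3.19) genuinely selects the global maximizer over $\Lambda$ rather than a merely local candidate. Once this strict separation is established, the eigenvalue classification and the transversality computation are routine.
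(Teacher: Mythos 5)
Your proposal is correct and follows essentially the same route as the paper's proof: stability of each mode is read off from the sign of $\operatorname{DET}_{n}$ (with $\operatorname{TR}_{n}<0$ secured by (3.7)--(3.8)), the zero root at $n=n_{1}$ comes from $\alpha_{*}(d_{u})=\alpha(n_{1},d_{u})$ forcing $\operatorname{DET}_{n_{1}}=0$, simplicity from $-\operatorname{TR}_{n_{1}}>0$, and transversality by implicit differentiation of $\Gamma_{n_{1}}$ in $\alpha$. The only difference is one of thoroughness: the envelope comparison $\alpha(n_{1},d_{u})>\alpha(n,d_{u})$ for $n\neq n_{1}$ and the low modes $n\notin\Lambda$ (including $n=0$), which you correctly flag as the step requiring care and dispose of via Lemmas \ref{lem:3.1}--\ref{lem:3.3}, are handled in the paper by a one-sentence assertion that (3.7)--(3.8) ensure $\operatorname{DET}_{n}>0$ for the remaining modes.
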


\begin{proof}
We provide the proof for part (i), and the second assertion can be proved in a similar manner. Note that in (3.6), $\operatorname{DET}_{n}=0$ if and only if $\alpha=\alpha_{*}(n,d_{u})$ for $n \in \Lambda$, $0<d_{u}<d_{u}^{\overline{n}}$. So, for any $n_{1} \in \Lambda$, $\lambda_{1}(n_{1},\alpha)=0$ is always a root of (3.4) with such $n_{1}$ when $\alpha=\alpha_{*}(n_{1},d_{u})$. By the definition of $\alpha_{*}(d_{u})$ and $\alpha_{*}(n,d_{u})$, if $d_{u} \in (d_{u}^{n_{1},n_{1}+1},d_{u}^{n_{1}-1,n_{1}})$ and $\alpha=\alpha_{*}(d_{u})$, then 0 is a root of (3.4) with $n=n_{1}$. Moreover, it follows
\begin{equation*}
\left.\frac{\mathrm{d}\Gamma_{n_{1}}(\lambda,\alpha)}{\mathrm{d}\lambda}\right|_{\lambda=0}=-\operatorname{TR}_{n_{1}}>0
\end{equation*}
that $\lambda=0$ is simple. The conditions (3.7) and (3.8) ensures that $\operatorname{TR}_{n}<0$ for all $n \in \mathbb{N}_{0}$ and $\operatorname{DET}_{n}>0$ for $n \geqslant \overline{n}, n \in \mathbb{N}$ and $n \neq n_{1}$. Thus, all the other roots of (3.4) have strictly negative real parts. By differentiating (3.4) with respect to $\alpha$, we can verify that the transversality condition (3.19) is satisfied.

\end{proof}

\begin{theorem}\label{thm:3.5}
For model (1.1) and any $a, b_{1}, b_{2}, \beta, d_{v}, m_{1}>0$, provided that $c>\frac{m_{1}}{b_{1}}$, then

(1) when $r_{0} \geqslant \overline{r_{0}}$, for $d_{u}>0, \alpha>0$, the positive constant steady state $E_{*}$ is always locally asymptotically stable;

(2) when $\underline{r_{0}}<r_{0}<\overline{r_{0}}$,

(a) if further $d_{u} \geqslant d_{u}^{\overline{n}}$, then $E_{*}$ is locally asymptotically stable for $\alpha>0$;

(b) when $0<d_{u}<d_{u}^{\overline{n}}$,

(i) $E_{*}$ is locally asymptotically stable for $\alpha>\alpha_{*}(d_{u})$, and unstable for $0<\alpha<\alpha_{*}(d_{u})$;

(ii) if further $d_{u} \in (d_{u}^{n_{1},n_{1}+1},d_{u}^{n_{1}-1,n_{1}})$ for some $n_{1} \geqslant \overline{n}$ and $n_{1} \in \mathbb{N}$, the model (1.1) undergoes mode-$n_{1}$ Turing bifurcation when $\alpha=\alpha_{*}(d_{u})$;

(iii) if further $d_{u}=d_{u}^{n_{1},n_{1}+1}$ for some $n_{1} \geqslant \overline{n}$ and $n_{1} \in \mathbb{N}$, the model (1.1) undergoes mode-$(n_{1},n_{1}+1)$ Turing-Turing bifurcation when $\alpha=\alpha_{*}(d_{u}^{n_{1},n_{1}+1})$.

\end{theorem}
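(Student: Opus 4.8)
The plan is to reduce the whole statement to the Routh--Hurwitz criterion for the quadratics $\Gamma_n(\lambda,\alpha)$ in (3.4) and then to assemble the lemmas already established. Writing $\Gamma_n(\lambda,\alpha)=\lambda^2-\operatorname{TR}_n\lambda+\operatorname{DET}_n$, both roots have strictly negative real part if and only if $\operatorname{TR}_n<0$ and $\operatorname{DET}_n>0$; hence $E_*$ is locally asymptotically stable exactly when these two inequalities hold for every $n\in\mathbb{N}_0$. Under the standing hypotheses (3.7)--(3.8) it has already been recorded that $\operatorname{TR}_n<0$ for all $n$, so the entire problem collapses to tracking the sign of $\operatorname{DET}_n$ as $n$, $d_u$ and $\alpha$ vary.

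For part (1), when $r_0\geq\overline{r_0}$ one has $au_*-(1-\beta)\varpi\geq0$, so in (3.6) the $n^4$- and constant terms are strictly positive and the $n^2$-coefficient is non-negative for every $\alpha>0$; thus $\operatorname{DET}_n>0$ for all $n$, which is precisely the observation made in the discussion following (3.9), and stability is immediate. The substance lies in part (2), where $au_*-(1-\beta)\varpi<0$. Here I would first record that, for $n\geq1$, the coefficient of $\alpha$ in $\operatorname{DET}_n$ equals $\tfrac{n^2}{\ell^2}cb_2v_*\varpi>0$, so $\operatorname{DET}_n$ is affine and strictly increasing in $\alpha$, vanishing exactly at $\alpha=\alpha(n,d_u)$ of (3.10); moreover $\operatorname{DET}_0=acb_2u_*\varpi>0$ unconditionally, and for $n\geq1$ with $n\notin\Lambda$ formula (3.12) gives a non-positive threshold $d_u^n$, whence $\operatorname{DET}_n>0$ for all $\alpha>0$. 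Consequently only the modes $n\in\Lambda$ can be destabilized, and $\operatorname{DET}_n>0$ holds for every $n$ if and only if $\alpha>\max_{n\in\Lambda}\alpha_*(n,d_u)$, with $\alpha_*(n,d_u)=\max\{\alpha(n,d_u),0\}$ as in (3.16).

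The crux is to identify this upper envelope with the curve $\alpha_*(d_u)$ of (3.18). For part (2)(a), Lemma \ref{lem:3.2}(i) shows that $d_u^n$ is maximized at $n=\overline{n}$, so $d_u^{\overline{n}}=\max_n d_u^n$; hence $d_u\geq d_u^{\overline{n}}$ forces $d_u\geq d_u^n$ for every $n\in\Lambda$, and Lemma \ref{lem:3.2}(ii) then yields $\alpha(n,d_u)\leq0$, i.e. $\alpha_*(n,d_u)=0$, for all such $n$. Therefore $\operatorname{DET}_n>0$ for all $\alpha>0$ and $E_*$ is stable. For part (2)(b)(i), with $0<d_u<d_u^{\overline{n}}$, I would invoke Lemma \ref{lem:3.3}: its intersection points $d_u^{n,n+1}$ partition $(0,d_u^{\overline{n}})$ into the half-open intervals $[d_u^{n,n+1},d_u^{n-1,n})$, and combining the monotonicity of the family $\{\alpha(n,\cdot)\}$ (Lemmas \ref{lem:3.1}--\ref{lem:3.2}) with these crossings shows that on each such interval the mode $n$ realizes the maximum, so that $\alpha_*(d_u)=\max_{n\in\Lambda}\alpha_*(n,d_u)$ is the first (topmost) Turing curve. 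Local asymptotic stability for $\alpha>\alpha_*(d_u)$, and the existence of a mode with $\operatorname{DET}_n<0$ --- hence a positive real root and instability --- for $0<\alpha<\alpha_*(d_u)$, then follow at once.

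Finally, parts (2)(b)(ii) and (2)(b)(iii) are read directly from Lemma \ref{lem:3.4}. When $d_u\in(d_u^{n_1,n_1+1},d_u^{n_1-1,n_1})$ and $\alpha=\alpha_*(d_u)$, Lemma \ref{lem:3.4}(i) provides a simple zero root carried by the single mode $n_1$, all remaining roots having strictly negative real part, together with the transversality condition (3.19); this is exactly a mode-$n_1$ Turing bifurcation. At the partition endpoint $d_u=d_u^{n_1,n_1+1}$ with $\alpha=\alpha_*(d_u^{n_1,n_1+1})$, Lemma \ref{lem:3.4}(ii) furnishes simultaneous simple zero roots for the two adjacent modes $n_1$ and $n_1+1$, again with all other roots stable, which is the defining configuration of a codimension-two mode-$(n_1,n_1+1)$ Turing--Turing bifurcation. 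I expect the only genuinely delicate point to be the envelope identification in part (2)(b)(i): confirming that the dominant mode on $[d_u^{n,n+1},d_u^{n-1,n})$ is precisely $n$ requires marrying the monotonicity in $n$ of $d_u^n$ with the pairwise crossings $d_u^{n,n+1}$ and separately handling the two branches in the definition (3.13) of $\overline{n}$; everything else reduces to bookkeeping on the sign of $\operatorname{DET}_n$.
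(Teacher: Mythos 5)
Your proposal is correct and follows essentially the same route as the paper: the paper proves Theorem \ref{thm:3.5} precisely by the Routh--Hurwitz sign analysis of $\operatorname{TR}_{n}$ and $\operatorname{DET}_{n}$ in (3.6), the reduction of instability to the modes $n\in\Lambda$ via the curves $\alpha(n,d_{u})$ and thresholds $d_{u}^{n}$, the envelope construction of $\alpha_{*}(d_{u})$ through Lemmas \ref{lem:3.1}--\ref{lem:3.3}, and the spectral/transversality statements of Lemma \ref{lem:3.4} for parts (2)(b)(ii)--(iii). The one point you flag as delicate --- that mode $n$ dominates on $[d_{u}^{n,n+1},d_{u}^{n-1,n})$ --- is likewise left at the same level of detail in the paper (it is built into the definition (3.18) and the proof of Lemma \ref{lem:3.4}), so your reconstruction matches the paper's argument in both substance and granularity.
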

Here, $E_{*}$, $\underline{r_{0}}$, $\overline{r_{0}}$, $d_{u}^{\overline{n}}$, $\overline{n}$, $d_{u}^{n_{1},n_{1}+1}$, and $\alpha_{*}(d_{u})$ are given in (3.1), (3.8), (3.9), (3.12), (3.13), (3.17), and (3.18), respectively. Let $\ell=3$, $a=0.4$, $r_{0}=0.5$, $c=1$, $b_{1}=1$, $b_{2}=0.98$, $m_{1}=0.6$, $d_{v}=0.6$, $\beta=0.05$, it follows from (3.13) that $\overline{n}=3$, and (3.7) and (3.8) are satisfied. Thus, $E_{*}=(0.2806,0.1813)$ is locally asymptotically stable for the local ODE model. In Fig. \ref{fig:1}(a), we demonstrate the schematic diagram of Turing bifurcation curves $\alpha=\alpha_{*}(n,d_{u})$ when $0<d_{u}<d_{u}^{\overline{n}}$ for different $n \in \Lambda$ in $(d_{u},\alpha)$-plane. The corresponding first Turing bifurcation curve $\alpha=\alpha_{*}(d_{u})$ with $0<d_{u}<d_{u}^{\overline{n}}$ is plotted in Fig. \ref{fig:1}(b), and the non-smooth points of $\alpha=\alpha_{*}(d_{u})$, $0<d_{u}<d_{u}^{\overline{n}}$, $T_{\overline{n}, \overline{n}+1}, T_{\overline{n}+1, \overline{n}+2}, \cdots$ are Turing-Turing bifurcation points.

\begin{figure}[!htbp]
\centering
\includegraphics[width=3in]{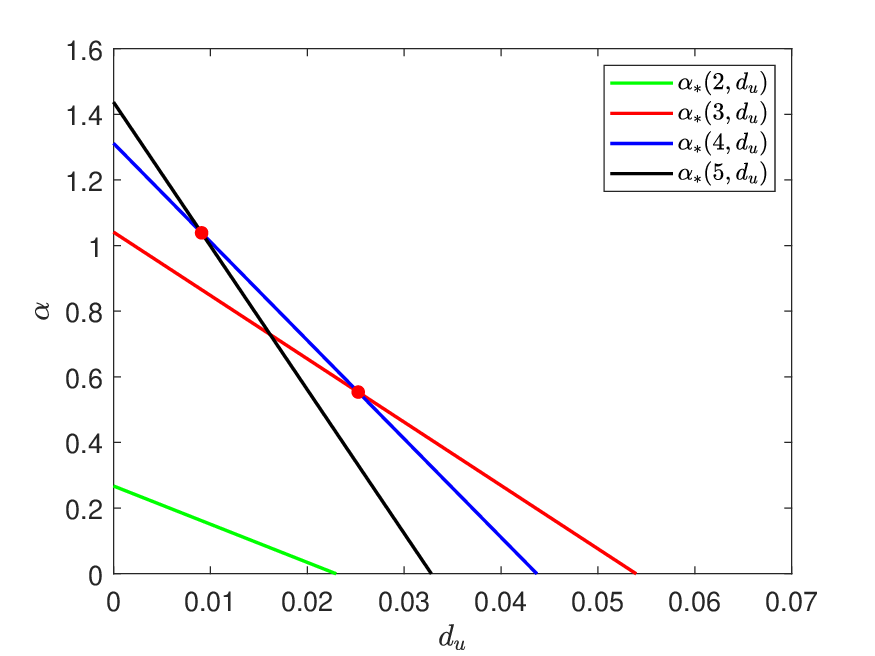}
\includegraphics[width=3in]{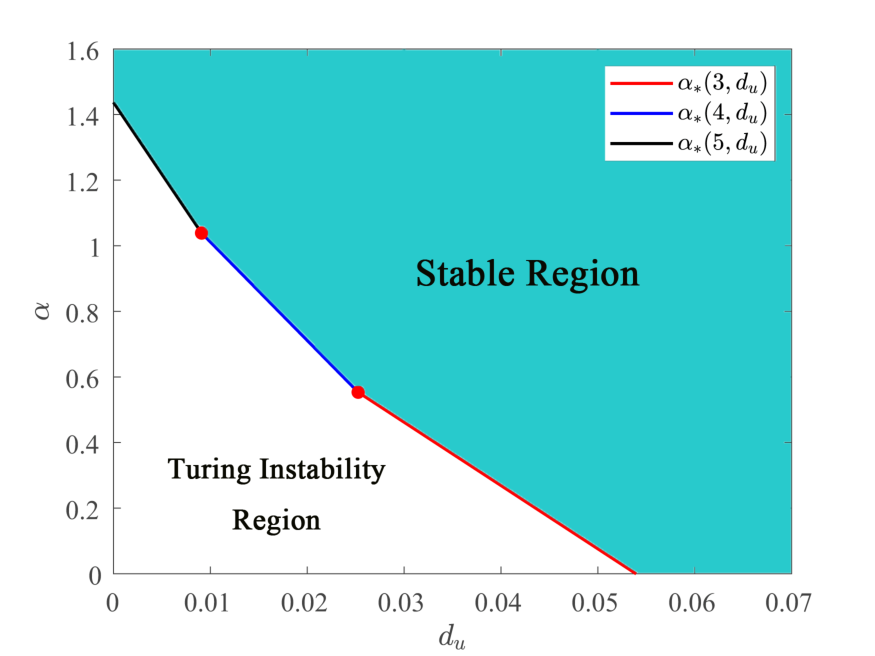} \\
\text{(a)} \hspace{7.8cm} \text{(b)} \\
\caption{(a) Turing bifurcation curves $\alpha=\alpha_{*}(n,d_{u})$ when $0<d_{u}<d_{u}^{\overline{n}}$ for different $n \in \Lambda$ in $(d_{u},\alpha)$-plane. (b) The first Turing bifurcation curve $\alpha=\alpha_{*}(d_{u})$ with $0<d_{u}<d_{u}^{\overline{n}}$.}
\label{fig:1}
\end{figure}

\section{Algorithm for calculating the normal form of Turing-Turing bifurcation for model (1.1)}
\label{sec:4}

\subsection{Basic assumption and equation transformation}

\begin{assumption}\label{asm:4.1}
When $(d_{u},\alpha)=(d_{u}^{*},\alpha^{*})$, assume that there is a neighborhood $V_{0} \subset \mathbb{R}^{2}$ containing $(0,0)$ such that $\mu:=(\mu_{1},\mu_{2}) \in V_{0}$, and the characteristic equations (3.4) of the model (1.1) possess two simple independent real eigenvalues $\lambda_{i}(\mu),~i=1,2$ associated with different $n_{1}, n_{2} \in \mathbb{N}$, which satisfy $\lambda_{i}(0)=0$, $\partial\lambda_{i}(0)/\partial\mu_{i}  \neq 0$, $i=1,2$. Meanwhile, the real part of other eigenvalues of the characteristic equations (3.4) is non-zero.
\end{assumption}

Define the real-valued Sobolev space
\begin{equation*}
X:=\left\{(u,v)^{T} \in \left(W^{2,2}(0,\ell\pi)\right)^{2}: \frac{\partial u}{\partial x}=\frac{\partial v}{\partial x}=0 \text { at } x=0, \ell\pi\right\}
\end{equation*}
with the inner product defined by
\begin{equation*}
\left[U_{1},U_{2}\right]=\int_{0}^{\ell\pi}U_{1}^{T}U_{2}\mathrm{d}x \text { for } U_{1}=(u_{1},v_{1})^{T} \in X \text{ and } U_{2}=(u_{2},v_{2})^{T} \in X,
\end{equation*}
where the symbol $T$ represents the transpose of vector. Furthermore, notice that the eigenvalue problem
\begin{equation*}\left\{\begin{aligned}
&\widetilde{\varphi}^{\prime \prime}(x)=\widetilde{\lambda}\widetilde{\varphi}(x),~x \in(0,\ell\pi), \\
&\widetilde{\varphi}^{\prime}(0)=\widetilde{\varphi}^{\prime}(\ell\pi)=0
\end{aligned}\right.\end{equation*}
has eigenvalues $\widetilde{\lambda}_{n}=-n^{2}/\ell^{2}$ with corresponding normalized eigenfunctions
\begin{equation}
\beta_{n}^{(j)}=\gamma_{n}(x)e_{j},~\gamma_{n}(x)=\frac{\cos(nx/\ell)}{\left\|\cos(nx/\ell)\right\|_{2,2}}=\left\{\begin{aligned}
&\frac{1}{\sqrt{\ell\pi}}, & n=0, \\
&\frac{\sqrt{2}}{\sqrt{\ell\pi}}\cos\left(\frac{nx}{\ell}\right), & n \geq 1,
\end{aligned}\right.\end{equation}
where $e_{j},~j=1,2$ are the unit coordinate vector of $\mathbb{R}^{2}$. Let $d_{u}=d_{u}^{*}+\mu_{1}$ and $\alpha=\alpha^{*}+\mu_{2}$ such that $(\mu_{1},\mu_{2})=(0,0)$ corresponds to the Turing-Turing bifurcation point of the model (1.1). Moreover, we shift the positive constant steady state $E_{*}(u_{*},v_{*})$ to the origin by setting
\begin{equation*}
U(x,t)=(U_{1}(x,t),U_{2}(x,t))^{T}=(u(x,t),v(x,t))^{T}-(u_{*},v_{*})^{T},
\end{equation*}
and rewrite $U(t)$ for $U(x,t)$. Then the model (1.1) becomes the compact form
\begin{equation}
\frac{\mathrm{d}U(t)}{\mathrm{d}t}=d(\mu)\Delta U(t)+LU(t)+F(U(t)),
\end{equation}
where denote $U(t):=\varphi=(\varphi^{(1)},\varphi^{(2)})^{T}$, and $d(\mu)\Delta$, $L$, $F$ are given, respectively, by
\begin{equation}\begin{aligned}
&d(\mu)\Delta\varphi=d_{0}\Delta\varphi+F^{d}(\varphi,\mu),~L\varphi=A_{1}\varphi, \\
&F(\varphi)=\left(\begin{array}{c}
(\varphi^{(1)}+u_{*})\left(r_{0}-a(\varphi^{(1)}+u_{*})\right)-\frac{b_{1}(1-\beta)(\varphi^{(1)}+u_{*})(\varphi^{(2)}+v_{*})}{b_{2}(\varphi^{(2)}+v_{*})+(1-\beta)(\varphi^{(1)}+u_{*})} \\
-m_{1}(\varphi^{(2)}+v_{*})+\frac{cb_{1}(1-\beta)(\varphi^{(1)}+u_{*})(\varphi^{(2)}+v_{*})}{b_{2}(\varphi^{(2)}+v_{*})+(1-\beta)(\varphi^{(1)}+u_{*})}
\end{array}\right)-L\varphi
\end{aligned}\end{equation}
with
\begin{equation}\begin{aligned}
d_{0}\Delta\varphi&=D_{0}\varphi_{xx}=\left(\begin{array}{cc}
d_{u}^{*} & \alpha^{*}u_{*} \\
0 & d_{v}
\end{array}\right)\varphi_{xx}, \\
F^{d}(\varphi,\mu)&=\left(\begin{array}{c}
\alpha^{*}\left(\varphi^{(1)}_{x}\varphi^{(2)}_{x}+\varphi^{(1)}\varphi^{(2)}_{xx}\right) \\
0
\end{array}\right)+\mu_{1}\left(\begin{array}{c}
\varphi_{xx}^{(1)} \\
0
\end{array}\right)+\mu_{2}\left(\begin{array}{c}
\varphi^{(1)}_{x}\varphi^{(2)}_{x}+\varphi^{(1)}\varphi^{(2)}_{xx}+u_{*}\varphi_{xx}^{(2)} \\
0
\end{array}\right).
\end{aligned}\end{equation}
Here, $D_{0}$ and $A_{1}$ are given by (3.3). In what follows, assume that $F(\varphi)$ is $C^{n},~n \geq 3$, smooth with respect to $\varphi$, then (4.2) is rewritten as
\begin{equation}
\frac{\mathrm{d}U(t)}{\mathrm{d}t}=d_{0}\Delta U(t)+LU(t)+\widetilde{F}(U(t),\mu)
\end{equation}
by separating the linear terms from the nonlinear terms, where
\begin{equation}
\widetilde{F}(\varphi,\mu)=F^{d}(\varphi,\mu)+F(\varphi).
\end{equation}
Then the linearized system of (4.5) is
\begin{equation}
\frac{\mathrm{d}U(t)}{\mathrm{d}t}=d_{0}\Delta U(t)+LU(t),
\end{equation}
and the characteristic equation for the linearized system (4.7) is
\begin{equation*}
\prod_{n \in \mathbb{N}_{0}}\widetilde{\Gamma}_{n}(\lambda)=0,
\end{equation*}
where $\widetilde{\Gamma}_{n}(\lambda)=\operatorname{det}(\widetilde{\mathcal{M}}_{n}(\lambda))$ with
\begin{equation}
\widetilde{\mathcal{M}}_{n}(\lambda)=\lambda I_{2}+\frac{n^{2}}{\ell^{2}}D_{0}-A_{1},
\end{equation}
and $I_{2}$ is the $2\times 2$ identity matrix. Choose
\begin{equation}
\Phi_{n_{1}}=\phi_{n_{1}},~\Phi_{n_{2}}=\phi_{n_{2}},~\Psi_{n_{1}}=\psi_{n_{1}}^{T},~\Psi_{n_{2}}=\psi_{n_{2}}^{T},
\end{equation}
where $\phi_{n_{1}}=\operatorname{col}(\phi_{n_{1}}^{(1)},\phi_{n_{1}}^{(2)}) \in \mathbb{R}^{2}$ and $\phi_{n_{2}}=\operatorname{col}(\phi_{n_{2}}^{(1)},\phi_{n_{2}}^{(2)}) \in \mathbb{R}^{2}$ are the eigenvectors associated with two simple independent real eigenvalues $0$, $\psi_{n_{1}}=\operatorname{col}(\psi_{n_{1}}^{(1)},\psi_{n_{1}}^{(2)}) \in \mathbb{R}^{2}$ and $\psi_{n_{2}}=\operatorname{col}(\psi_{n_{2}}^{(1)},\psi_{n_{2}}^{(2)}) \in \mathbb{R}^{2}$ are the corresponding adjoint eigenvectors such that
\begin{equation}
\left\langle\Psi_{n_{1}},\Phi_{n_{1}}\right\rangle_{n_{1}}=1,~\left\langle\Psi_{n_{2}},\Phi_{n_{2}}\right\rangle_{n_{2}}=1.
\end{equation}
By referring to \cite{lv29}, the phase space $X$ can be decomposed as
\begin{equation}
X=\mathcal{P} \oplus \mathcal{Q},~\mathcal{P}=\operatorname{Im}\pi,~\mathcal{Q}=\operatorname{Ker}\pi,
\end{equation}
where for $\phi \in X$, the projection $\pi: X \rightarrow \mathcal{P}$ is defined by
\begin{equation}
\pi(\phi)=\Phi_{n_{1}}\left\langle\Psi_{n_{1}},\left(\begin{aligned}
&\left[\phi,\beta_{n_{1}}^{(1)}\right] \\
&\left[\phi,\beta_{n_{1}}^{(2)}\right]
\end{aligned}\right)\right\rangle_{n_{1}}\gamma_{n_{1}}(x)+\Phi_{n_{2}}\left\langle\Psi_{n_{2}},\left(\begin{aligned}
&\left[\phi,\beta_{n_{2}}^{(1)}\right] \\
&\left[\phi,\beta_{n_{2}}^{(2)}\right]
\end{aligned}\right)\right\rangle_{n_{2}}\gamma_{n_{2}}(x).
\end{equation}
Then by combining with (4.9), (4.10) and (4.11), and by a direct calculation, we have
\begin{equation*}\begin{aligned}
\phi_{n_{1}}&=\left(1,\frac{g_{u}}{d_{v}\frac{n_{1}^{2}}{\ell^{2}}-g_{v}}\right)^{T},~\phi_{n_{2}}=\left(1,\frac{g_{u}}{d_{v}\frac{n_{2}^{2}}{\ell^{2}}-g_{v}}\right)^{T}, \\
\psi_{n_{1}}&=\left(1-\frac{g_{u}\left(d_{u}^{*}\frac{n_{1}^{2}}{\ell^{2}}-f_{u}\right)}{g_{u}\left(d_{v}\frac{n_{1}^{2}}{\ell^{2}}-g_{v}\right)+g_{u}\left(d_{u}^{*}\frac{n_{1}^{2}}{\ell^{2}}-f_{u}\right)},\frac{d_{u}^{*}\frac{n_{1}^{2}}{\ell^{2}}-f_{u}}{g_{u}+\left(d_{u}^{*}\frac{n_{1}^{2}}{\ell^{2}}-f_{u}\right)\frac{g_{u}}{d_{v}\frac{n_{1}^{2}}{\ell^{2}}-g_{v}}}\right)^{T}, \\
\psi_{n_{2}}&=\left(1-\frac{g_{u}\left(d_{u}^{*}\frac{n_{2}^{2}}{\ell^{2}}-f_{u}\right)}{g_{u}\left(d_{v}\frac{n_{2}^{2}}{\ell^{2}}-g_{v}\right)+g_{u}\left(d_{u}^{*}\frac{n_{2}^{2}}{\ell^{2}}-f_{u}\right)},\frac{d_{u}^{*}\frac{n_{2}^{2}}{\ell^{2}}-f_{u}}{g_{u}+\left(d_{u}^{*}\frac{n_{2}^{2}}{\ell^{2}}-f_{u}\right)\frac{g_{u}}{d_{v}\frac{n_{2}^{2}}{\ell^{2}}-g_{v}}}\right)^{T}.
\end{aligned}\end{equation*}
By combining with (4.8), (4.10) and (4.11), $U(t)$ can be decomposed as
\begin{equation}\begin{aligned}
U(t)&=(\Phi_{n_{1}}~~\Phi_{n_{2}})\left(\begin{array}{c}
z_{1}\gamma_{n_{1}}(x) \\
z_{2}\gamma_{n_{2}}(x)
\end{array}\right)+w \\
&=\left(\Phi_{n_{1}}~~\Phi_{n_{2}}\right)\left(\begin{array}{c}
z_{1}\gamma_{n_{1}}(x) \\
z_{2}\gamma_{n_{2}}(x)
\end{array}\right)+\left(\begin{array}{c}
w_{1} \\
w_{2}
\end{array}\right),
\end{aligned}\end{equation}
where $w=\operatorname{col}(w_{1},w_{2})\in \mathcal{Q}$ and
\begin{equation*}
z_{1}=\left\langle\Psi_{n_{1}},\left(\begin{aligned}
&\left[U(t),\beta_{n_{1}}^{(1)}\right] \\
&\left[U(t),\beta_{n_{1}}^{(2)}\right]
\end{aligned}\right)\right\rangle_{n_{1}},~z_{2}=\left\langle\Psi_{n_{2}},\left(\begin{aligned}
&\left[U(t),\beta_{n_{2}}^{(1)}\right] \\
&\left[U(t),\beta_{n_{2}}^{(2)}\right]
\end{aligned}\right)\right\rangle_{n_{2}}.
\end{equation*}
Let
\begin{equation}
\Phi=(\Phi_{n_{1}},\Phi_{n_{2}}),~z_{x}=\left(z_{1}\gamma_{n_{1}}(x),z_{2}\gamma_{n_{2}}(x)\right)^{T},
\end{equation}
then (4.13) can be rewritten as $U(t)=\Phi z_{x}+w$. For the simplicity of notation, let
\begin{equation*}\begin{aligned}
&\left(\begin{aligned}
&\left[\widetilde{F}\left(\Phi z_{x}+w,\mu\right),\beta_{\nu}^{(1)}\right] \\
&\left[\widetilde{F}\left(\Phi z_{x}+w,\mu\right),\beta_{\nu}^{(2)}\right]
\end{aligned}\right)_{\nu=n_{1}}^{\nu=n_{2}} \\
&:=\operatorname{col}\left(\left(\begin{aligned}
&\left[\widetilde{F}\left(\Phi z_{x}+w,\mu\right),\beta_{n_{1}}^{(1)}\right] \\
&\left[\widetilde{F}\left(\Phi z_{x}+w,\mu\right),\beta_{n_{1}}^{(2)}\right]
\end{aligned}\right),\left(\begin{aligned}
&\left[\widetilde{F}\left(\Phi z_{x}+w,\mu\right),\beta_{n_{2}}^{(1)}\right] \\
&\left[\widetilde{F}\left(\Phi z_{x}+w,\mu\right),\beta_{n_{2}}^{(2)}\right]
\end{aligned}\right)\right),
\end{aligned}\end{equation*}
then the system (4.5) is decomposed as a system of abstract ODE on $\mathbb{R}^{2} \times \operatorname{Ker}\pi$, with finite and infinite dimensional variables are separated in the linear term, i.e.,
\begin{equation}
\left\{\begin{aligned}
\dot{z}&=Bz+\Psi\left(\begin{aligned}
&\left[\widetilde{F}(\Phi z_{x}+w,\mu),\beta_{\nu}^{(1)}\right] \\
&\left[\widetilde{F}(\Phi z_{x}+w,\mu),\beta_{\nu}^{(2)}\right]
\end{aligned}\right)_{\nu=n_{1}}^{\nu=n_{2}}, \\
\dot{w}&=\mathcal{L}_{n}w+(I_{2}-\pi)\widetilde{F}(\Phi z_{x}+w,\mu),
\end{aligned}\right.
\end{equation}
where $z=(z_{1},z_{2})^{T}$, $B=\operatorname{diag}\left\{0,0\right\}$ and $\Psi=\operatorname{diag}\left\{\Psi_{n_{1}},\Psi_{n_{2}}\right\}$ are the diagonal matrices, and $\mathcal{L}_{n}$ is defined by
\begin{equation}
\mathcal{L}_{n}w=D_{0}w_{xx}+A_{1}w.
\end{equation}
Consider the formal Taylor expansions
\begin{equation}\begin{aligned}
&\widetilde{F}(\varphi,\mu)=\sum_{j \geq 2}\frac{1}{j!}\widetilde{F}_{j}(\varphi,\mu),~F^{d}(\varphi,\mu)=\sum_{j \geq 2}\frac{1}{j!}F_{j}^{d}(\varphi,\mu), \\
&F(\varphi)=\sum_{j \geq 2}\frac{1}{j!}F_{j}(\varphi),
\end{aligned}\end{equation}
and according to (4.3), (4.4), (4.6), we have
\begin{equation}\begin{aligned}
&\widetilde{F}_{2}(\varphi,\mu)=F_{2}^{d}(\varphi,\mu)+F_{2}(\varphi), \\
&\widetilde{F}_{3}(\varphi,\mu)=F_{3}^{d}(\varphi,\mu)+F_{3}(\varphi), \\
\end{aligned}\end{equation}
and
\begin{equation}
\widetilde{F}_{j}(\varphi,\mu)=F_{j}^{d}(\varphi,\mu)+F_{j}(\varphi)=F_{j}(\varphi),~j=4,\cdots,
\end{equation}
where $F_{j}^{d}(\varphi,\mu)=(0,0)^{T},~j=4,\cdots$. By combining with (4.17), the system (4.15) can be rewritten as
\begin{equation}
\left\{\begin{aligned}
&\dot{z}=Bz+\sum_{j \geq 2}\frac{1}{j!}f_{j}^{1}(z,w,\mu), \\
&\dot{w}=\mathcal{L}_{n}w+\sum_{j \geq 2}\frac{1}{j!}f_{j}^{2}(z,w,\mu),
\end{aligned}\right.
\end{equation}
where
\begin{equation}\begin{aligned}
f_{j}^{1}(z,w,\mu)&=\Psi\left(\begin{aligned}
&\left[\widetilde{F}_{j}(\Phi z_{x}+w,\mu),\beta_{\nu}^{(1)}\right] \\
&\left[\widetilde{F}_{j}(\Phi z_{x}+w,\mu),\beta_{\nu}^{(2)}\right]
\end{aligned}\right)_{\nu=n_{1}}^{\nu=n_{2}}, \\
f_{j}^{2}(z,w,\mu)&=(I_{2}-\pi)\widetilde{F}_{j}(\Phi z_{x}+w,\mu).
\end{aligned}\end{equation}

\begin{remark}
By combining with (4.18), (4.19) and (4.21), we can obtain that
\begin{equation}\begin{aligned}
f_{2}^{1}(z,w,\mu)&=\Psi\left(\begin{aligned}
&\left[\widetilde{F}_{2}(\Phi z_{x}+w,\mu),\beta_{\nu}^{(1)}\right] \\
&\left[\widetilde{F}_{2}(\Phi z_{x}+w,\mu),\beta_{\nu}^{(2)}\right]
\end{aligned}\right)_{\nu=n_{1}}^{\nu=n_{2}} \\
&=\Psi\left(\begin{aligned}
&\left[F_{2}^{d}(\Phi z_{x}+w,\mu)+F_{2}(\Phi z_{x}+w),\beta_{\nu}^{(1)}\right] \\
&\left[F_{2}^{d}(\Phi z_{x}+w,\mu)+F_{2}(\Phi z_{x}+w),\beta_{\nu}^{(2)}\right]
\end{aligned}\right)_{\nu=n_{1}}^{\nu=n_{2}}, \\
f_{3}^{1}(z,w,\mu)&=\Psi\left(\begin{aligned}
&\left[\widetilde{F}_{3}(\Phi z_{x}+w,\mu),\beta_{\nu}^{(1)}\right] \\
&\left[\widetilde{F}_{3}(\Phi z_{x}+w,\mu),\beta_{\nu}^{(2)}\right]
\end{aligned}\right)_{\nu=n_{1}}^{\nu=n_{2}} \\
&=\Psi\left(\begin{aligned}
&\left[F_{3}^{d}(\Phi z_{x}+w,\mu)+F_{3}(\Phi z_{x}+w),\beta_{\nu}^{(1)}\right] \\
&\left[F_{3}^{d}(\Phi z_{x}+w,\mu)+F_{3}(\Phi z_{x}+w),\beta_{\nu}^{(2)}\right]
\end{aligned}\right)_{\nu=n_{1}}^{\nu=n_{2}}, \\
f_{j}^{1}(z,w,\mu)&=\Psi\left(\begin{aligned}
&\left[\widetilde{F}_{j}(\Phi z_{x}+w,\mu),\beta_{\nu}^{(1)}\right] \\
&\left[\widetilde{F}_{j}(\Phi z_{x}+w,\mu),\beta_{\nu}^{(2)}\right]
\end{aligned}\right)_{\nu=n_{1}}^{\nu=n_{2}}=\Psi\left(\begin{aligned}
&\left[F_{j}(\Phi z_{x}+w),\beta_{\nu}^{(1)}\right] \\
&\left[F_{j}(\Phi z_{x}+w),\beta_{\nu}^{(2)}\right]
\end{aligned}\right)_{\nu=n_{1}}^{\nu=n_{2}},~j \geq 4, \\
f_{2}^{2}(z,w,\mu)&=(I_{2}-\pi)\widetilde{F}_{2}(\Phi z_{x}+w,\mu)=(I_{2}-\pi)\left(F_{2}^{d}(\Phi z_{x}+w,\mu)+F_{2}(\Phi z_{x}+w)\right), \\
f_{3}^{2}(z,w,\mu)&=(I_{2}-\pi)\widetilde{F}_{3}(\Phi z_{x}+w,\mu)=(I_{2}-\pi)\left(F_{3}^{d}(\Phi z_{x}+w,\mu)+F_{3}(\Phi z_{x}+w)\right), \\
f_{j}^{2}(z,w,\mu)&=(I_{2}-\pi)\widetilde{F}_{2}(\Phi z_{x}+w,\mu)=(I_{2}-\pi)F_{j}(\Phi z_{x}+w),~j \geq 4.
\end{aligned}\end{equation}
\end{remark}

After a recursive transformation of variables of the form
\begin{equation}
(z,w)=(\widetilde{z},\widetilde{w})+\frac{1}{j!}\left(U_{j}^{1}(\widetilde{z},\mu),U_{j}^{2}(\widetilde{z},\mu)\right),~j \geq 2,
\end{equation}
where $z, \widetilde{z} \in \mathbb{R}^{2}$, $w, \widetilde{w} \in \mathcal{Q}$ and $U_{j}^{1}: \mathbb{R}^{4} \rightarrow \mathbb{R}^{2}$, $U_{j}^{2}: \mathbb{R}^{4} \rightarrow \mathcal{Q}$ are homogeneous polynomials of degree $j$ in $\widetilde{z}$ and $\mu$. From (4.23), and by dropping the tilde for simplicity of notation, the system (4.20) is transformed into the normal form
\begin{equation}
\left\{\begin{aligned}
&\dot{z}=Bz+\sum_{j \geq 2}\frac{1}{j!}g_{j}^{1}(z,w,\mu), \\
&\dot{w}=\mathcal{L}_{n}w+\sum_{j \geq 2}\frac{1}{j!}g_{j}^{2}(z,w,\mu),
\end{aligned}\right.
\end{equation}
where $g_{j}^{1}$ and $g_{j}^{2}$ are given by
\begin{equation}
g_{j}^{1}=\widetilde{f}_{j}^{1}-M_{j}^{1}U_{j}^{1},~g_{j}^{2}=\widetilde{f}_{j}^{2}-M_{j}^{2}U_{j}^{2}
\end{equation}
with $\widetilde{f}_{j}^{1}$ and $\widetilde{f}_{j}^{2}$ are the terms of order $j$ obtained after the changes of variables in previous step, and
\begin{equation}
M_{j}^{1}U_{j}^{1}=D_{z}U_{j}^{1}(z,\mu)Bz-BU_{j}^{1}(z,\mu),~M_{j}^{2}U_{j}^{2}=D_{z}U_{j}^{2}(z,\mu)Bz-\mathcal{L}_{n}U_{j}^{2}(z,\mu).
\end{equation}
By referring to \cite{lv29}, a locally center manifold for (4.24) satisfies $w=0$, and the flow on it is given by the two-dimensional ODE
\begin{equation}
\dot{z}=Bz+\sum_{j \geq 2}\frac{1}{j!}g_{j}^{1}(z,0,\mu),
\end{equation}
which is the normal form as in the usual sense for ODE. Furthermore, from (4.25), we have
\begin{equation}
g_{2}^{1}(z,0,\mu)=\operatorname{Proj}_{\operatorname{Ker}(M_{2}^{1})}f_{2}^{1}(z,0,\mu)
\end{equation}
and
\begin{equation}\begin{aligned}
g_{3}^{1}(z,0,\mu)&=\operatorname{Proj}_{\operatorname{Ker}(M_{3}^{1})}\widetilde{f}_{3}^{1}(z,0,\mu) \\
&=\operatorname{Proj}_{S_{1}}\widetilde{f}_{3}^{1}(z,0,0)+O(|z|^{2}|\mu|)+O(|z||\mu|^{2}),
\end{aligned}\end{equation}
where $\operatorname{Proj}_{p}(q)$ represents the projection of $q$ on $p$, $\widetilde{f}_{3}^{1}(z,0,\mu)$ is vector and its element is the cubic polynomial of $(z,\mu)$ after the variable transformation of (4.23). Furthermore, we have
\begin{equation}\begin{aligned}
\operatorname{Ker}(M_{2}^{1})&=\operatorname{span}\left\{z_{1}^{2}e_{1},z_{1}z_{2}e_{1},z_{1}\mu_{1}e_{1},z_{1}\mu_{2}e_{1},z_{2}^{2}e_{1},z_{2}\mu_{1}e_{1},z_{2}\mu_{2}e_{1},\mu_{1}^{2}e_{1},\mu_{1}\mu_{2}e_{1},\mu_{2}^{2}e_{1},z_{1}^{2}e_{2},\right. \\
&\left.z_{1}z_{2}e_{2},z_{1}\mu_{1}e_{2},z_{1}\mu_{2}e_{2},z_{2}^{2}e_{2},z_{2}\mu_{1}e_{2}, z_{2}\mu_{2}e_{2},\mu_{1}^{2}e_{2},\mu_{1}\mu_{2}e_{2},\mu_{2}^{2}e_{2}\right\}, \\
\operatorname{Ker}(M_{3}^{1})&=\operatorname{span}\left\{z_{1}^{3}e_{1},z_{1}^{2}z_{2}e_{1},z_{1}^{2}\mu_{1}e_{1},z_{1}^{2}\mu_{2}e_{1},z_{1}z_{2}^{2} e_{1},z_{1}z_{2}\mu_{1}e_{1},z_{1}z_{2}\mu_{2}e_{1},z_{1}\mu_{1}^{2}e_{1},z_{1}\mu_{1}\mu_{2}e_{1},\right. \\
&\left.z_{1}\mu_{2}^{2}e_{1},z_{2}^{3}e_{1},z_{2}^{2}\mu_{1}e_{1},z_{2}^{2}\mu_{2}e_{1},z_{2}\mu_{1}^{2}e_{1},z_{2}\mu_{1}\mu_{2}e_{1}, z_{2}\mu_{2}^{2}e_{1},\mu_{1}^{3}e_{1},\mu_{1}^{2}\mu_{2}e_{1},\mu_{1}\mu_{2}^{2}e_{1},\mu_{2}^{3}e_{1},\right. \\
&\left.z_{1}^{3}e_{2},z_{1}^{2}z_{2}e_{2},z_{1}^{2}\mu_{1}e_{2},z_{1}^{2}\mu_{2}e_{2},z_{1}z_{2}^{2} e_{2},z_{1}z_{2}\mu_{1}e_{2},z_{1}z_{2}\mu_{2}e_{2},z_{1}\mu_{1}^{2}e_{2},z_{1}\mu_{1}\mu_{2}e_{2},z_{1}\mu_{2}^{2}e_{2},\right. \\
&\left.z_{2}^{3}e_{2},z_{2}^{2}\mu_{1}e_{2},z_{2}^{2}\mu_{2}e_{2},z_{2}\mu_{1}^{2}e_{2},z_{2}\mu_{1}\mu_{2}e_{2}, z_{2}\mu_{2}^{2}e_{2},\mu_{1}^{3}e_{2},\mu_{1}^{2}\mu_{2}e_{2},\mu_{1}\mu_{2}^{2}e_{2},\mu_{2}^{3}e_{2}\right\}
\end{aligned}\end{equation}
and
\begin{equation}
S_{1}=\operatorname{span}\left\{z_{1}^{3}e_{1},z_{1}^{2}z_{2}e_{1},z_{1}z_{2}^{2}e_{1},z_{2}^{3}e_{1},z_{1}^{3}e_{2},z_{1}^{2}z_{2}e_{2},z_{1}z_{2}^{2}e_{2},z_{2}^{3}e_{2}\right\}.
\end{equation}

\begin{remark}
Since $B=\operatorname{diag}\left\{0,0\right\}$, then from (4.26), we have
\begin{equation*}\begin{aligned}
&M_{j}^{1}U_{j}^{1}=D_{z}U_{j}^{1}(z,\mu)Bz-BU_{j}^{1}(z,\mu)=\left(\begin{array}{c}
0 \\
0
\end{array}\right), \\
&M_{j}^{2}U_{j}^{2}=D_{z}U_{j}^{2}(z,\mu)Bz-\mathcal{L}_{n}U_{j}^{2}(z,\mu)=-\mathcal{L}_{n}U_{j}^{2}(z,\mu).
\end{aligned}\end{equation*}
Thus we can obtain that $\operatorname{Im}(M_{j}^{1})=\left\{0\right\}$ for $j \geq 2$ and $j \in \mathbb{N}$. Furthermore, consider that
\begin{equation*}
V_{j}^{2+2}(\mathbb{R}^{2})=\operatorname{span}\left\{\left(\begin{array}{c}
z^{q}\mu^{l} \\
0
\end{array}\right),\left(\begin{array}{c}
0 \\
z^{q}\mu^{l}
\end{array}\right): |q|+|l|=j \in \mathbb{N}, q, l \in \mathbb{N}_{0}^{2}\right\},
\end{equation*}
and from the decomposition
\begin{equation*}
V_{j}^{2+2}(\mathbb{R}^{2})=\operatorname{Im}(M_{j}^{1}) \oplus \operatorname{Ker}(M_{j}^{1}),~j \geq 2,~j \in \mathbb{N},
\end{equation*}
we have
\begin{equation*}
\operatorname{Ker}(M_{j}^{1})=V_{j}^{2+2}(\mathbb{R}^{2}),~j \geq 2,~j \in \mathbb{N}.
\end{equation*}
\end{remark}

By combining with (4.28) and (4.29), we can calculate $g_{j}^{1}(z,0,\mu),~j=2,3$ step by step, and the detailed derivation of the normal form for the Turing-Turing bifurcation in model (1.1), which includes all symbolic computations, has been deposited on arXiv at [arXiv:2506.09360].

\subsection{The normal form of Turing-Turing bifurcation for model (1.1)}

By combining with (4.27), (4.28), (4.29), and [arXiv:2506.09360], the third-order truncated normal form of Turing-Turing bifurcation for the model (1.1) can be written as
\begin{equation}\begin{aligned}
\dot{z}&=Bz+\left(\begin{array}{c}
B_{2000}^{(1)}z_{1}^{2}+B_{1100}^{(1)}z_{1}z_{2}+(B_{1010}^{(1)}\mu_{1}+B_{1001}^{(1)}\mu_{2})z_{1}+B_{0200}^{(1)}z_{2}^{2}+(B_{0110}^{(1)}\mu_{1}+B_{0101}^{(1)}\mu_{2})z_{2} \\
B_{2000}^{(2)}z_{1}^{2}+B_{1100}^{(2)}z_{1}z_{2}+(B_{1010}^{(2)}\mu_{1}+B_{1001}^{(2)}\mu_{2})z_{1}+B_{0200}^{(2)}z_{2}^{2}+(B_{0110}^{(2)}\mu_{1}+B_{0101}^{(2)}\mu_{2})z_{2}
\end{array}\right) \\
&+\left(\begin{array}{c}
B_{3000}^{(1)}z_{1}^{3}+B_{2100}^{(1)}z_{1}^{2}z_{2}+B_{1200}^{(1)}z_{1}z_{2}^{2}+B_{0300}^{(1)}z_{2}^{3} \\
B_{3000}^{(2)}z_{1}^{3}+B_{2100}^{(2)}z_{1}^{2}z_{2}+B_{1200}^{(2)}z_{1}z_{2}^{2}+B_{0300}^{(2)}z_{2}^{3}
\end{array}\right) \\
&+O(|z|^{2}|\mu|)+O(|z||\mu|^{2}),
\end{aligned}\end{equation}
where
\begin{equation}\left\{\begin{aligned}
B_{3000}^{(1)}&=C_{3000}^{(1)}+\frac{3}{2}(D_{3000}^{(1)}+E_{3000}^{(1)}+F_{3000}^{(1)}+G_{3000}^{(1)}), \\
B_{2100}^{(1)}&=C_{2100}^{(1)}+\frac{3}{2}(D_{2100}^{(1)}+E_{2100}^{(1)}+F_{2100}^{(1)}+G_{2100}^{(1)}), \\
B_{1200}^{(1)}&=C_{1200}^{(1)}+\frac{3}{2}(D_{1200}^{(1)}+E_{1200}^{(1)}+F_{1200}^{(1)}+G_{1200}^{(1)}), \\
B_{0300}^{(1)}&=C_{0300}^{(1)}+\frac{3}{2}(D_{0300}^{(1)}+E_{0300}^{(1)}+F_{0300}^{(1)}+G_{0300}^{(1)}), \\
B_{3000}^{(2)}&=C_{3000}^{(2)}+\frac{3}{2}(D_{3000}^{(2)}+E_{3000}^{(2)}+F_{3000}^{(2)}+G_{3000}^{(2)}), \\
B_{2100}^{(2)}&=C_{2100}^{(2)}+\frac{3}{2}(D_{2100}^{(2)}+E_{2100}^{(2)}+F_{2100}^{(2)}+G_{2100}^{(2)}), \\
B_{1200}^{(2)}&=C_{1200}^{(2)}+\frac{3}{2}(D_{1200}^{(2)}+E_{1200}^{(2)}+F_{1200}^{(2)}+G_{1200}^{(2)}), \\
B_{0300}^{(2)}&=C_{0300}^{(2)}+\frac{3}{2}(D_{0300}^{(2)}+E_{0300}^{(2)}+F_{0300}^{(2)}+G_{0300}^{(2)}).
\end{aligned}\right.\end{equation}

\section{Numerical simulations}
\label{sec:5}

For fixed $\ell=3$, $a=0.4$, $r_{0}=0.5$, $c=1$, $b_{1}=1$, $b_{2}=0.98$, $m_{1}=0.6$, $d_{v}=0.6$, $\beta=0.05$, it follows from (3.13) that $\overline{n}=3$, and (3.7) and (3.8) are satisfied. Thus, when $0<d_{u}<d_{u}^{\overline{n}}\doteq 0.0539$, by choosing $n_{1}=3$, according to Theorem \ref{thm:3.5}, the model (1.1) undergoes mode-$(3,4)$ Turing-Turing bifurcation when $d_{u}=d_{u}^{3,4}\doteq 0.0253$ and $\alpha=\alpha_{*}(0.0253)\doteq 0.5527$. Furthermore, according to Assumption \ref{asm:4.1}, we can let $d_{u}^{*}=d_{u}^{3,4}\doteq 0.0253$, $\alpha^{*}=\alpha_{*}(0.0253)\doteq 0.5527$, and $n_{1}=3$, $n_{2}=4$, then by combining with (4.32) and (4.33), and by using the matlab software for auxiliary calculation, we obtain that
\begin{equation}\begin{aligned}
&B_{2000}^{(1)}=0,~B_{1100}^{(1)}=0,~B_{1010}^{(1)}\doteq -0.5638,~B_{1001}^{(1)}\doteq -0.1041, \\
&B_{0200}^{(1)}=0,~B_{0110}^{(1)}=0,~B_{0101}^{(1)}=0, \\
&B_{2000}^{(2)}=0,~B_{1100}^{(2)}=0,~B_{1010}^{(2)}=0,~B_{1001}^{(2)}=0,~B_{0200}^{(2)}=0, \\
&B_{0110}^{(2)}\doteq -0.9433,~B_{0101}^{(2)}\doteq -0.1119, \\
&B_{3000}^{(1)}\doteq -0.0027,~B_{2100}^{(1)}=0,~B_{1200}^{(1)}\doteq -0.1098,~B_{0300}^{(1)}=0, \\
&B_{3000}^{(2)}=0,~B_{2100}^{(2)}\doteq 0.0490,~B_{1200}^{(2)}=0,~B_{0300}^{(2)}\doteq -0.1276.
\end{aligned}\end{equation}
From (5.1), we can see that $B_{2000}^{(1)}=0$, $B_{1100}^{(1)}=0$, $B_{0200}^{(1)}=0$, $B_{0110}^{(1)}=0$, $B_{0101}^{(1)}=0$, $B_{2000}^{(2)}=0$, $B_{1100}^{(2)}=0$, $B_{1010}^{(2)}=0$, $B_{1001}^{(2)}=0$, $B_{0200}^{(2)}=0$, $B_{2100}^{(1)}=0$, $B_{0300}^{(1)}=0$, $B_{3000}^{(2)}=0$, $B_{1200}^{(2)}=0$, thus the normal form (4.32) turns into
\begin{equation}\begin{aligned}
\dot{z}&=Bz+\left(\begin{array}{c}
(B_{1010}^{(1)}\mu_{1}+B_{1001}^{(1)}\mu_{2})z_{1} \\
(B_{0110}^{(2)}\mu_{1}+B_{0101}^{(2)}\mu_{2})z_{2}
\end{array}\right)+\left(\begin{array}{c}
B_{3000}^{(1)}z_{1}^{3}+B_{1200}^{(1)}z_{1}z_{2}^{2} \\
B_{2100}^{(2)}z_{1}^{2}z_{2}+B_{0300}^{(2)}z_{2}^{3}
\end{array}\right) \\
&+O(|z|^{2}|\mu|)+O(|z||\mu|^{2}).
\end{aligned}\end{equation}
By noticing that $z=(z_{1},z_{2})^{T}$ and $B=\operatorname{diag}\left\{0,0\right\}$, then (5.2) can be reformulated as
\begin{equation}\left\{\begin{aligned}
\frac{\mathrm{d}z_{1}}{\mathrm{d}t}&=\alpha_{1}(\mu)z_{1}+\kappa_{11}z_{1}^{3}+\kappa_{12}z_{1}z_{2}^{2}, \\
\frac{\mathrm{d}z_{2}}{\mathrm{d}t}&=\alpha_{2}(\mu)z_{2}+\kappa_{21}z_{1}^{2}z_{2}+\kappa_{22}z_{2}^{3},
\end{aligned}\right.\end{equation}
where
\begin{equation}\begin{aligned}
&\alpha_{1}(\mu):=B_{1010}^{(1)}\mu_{1}+B_{1001}^{(1)}\mu_{2},~\alpha_{2}(\mu):=B_{0110}^{(2)}\mu_{1}+B_{0101}^{(2)}\mu_{2}, \\
&\kappa_{11}:=B_{3000}^{(1)},~\kappa_{12}:=B_{1200}^{(1)},~\kappa_{21}:=B_{2100}^{(2)},~\kappa_{22}:=B_{0300}^{(2)}.
\end{aligned}\end{equation}

\begin{remark}
In this paper, we mainly study the simple case of $\kappa_{11}\kappa_{22}>0$ for the third-order truncated normal form (5.3). For the difficult case of $\kappa_{11}\kappa_{22}<0$, the higher order normal form is needed to be calculated.
\end{remark}

By re-scaling the variables with
\begin{equation}
\widetilde{z_{1}}=\sqrt{\left|\kappa_{11}\right|}z_{1},~\widetilde{z_{2}}=\sqrt{\left|\kappa_{22}\right|}z_{2},~\widetilde{t}=\frac{t}{\widetilde{\varepsilon}},~\widetilde{\varepsilon}=\operatorname{sign}(\kappa_{11}),
\end{equation}
the normal form (5.3) can be rewritten as the equivalent planar system
\begin{equation}\left\{\begin{aligned}
&\frac{\mathrm{d}\widetilde{z_{1}}}{\mathrm{d}\widetilde{t}}=\widetilde{z_{1}}\left(\widetilde{\varepsilon}\alpha_{1}(\mu)+\widetilde{z_{1}}^{2}+\widetilde{b}\widetilde{z_{2}}^{2}\right), \\
&\frac{\mathrm{d}\widetilde{z_{2}}}{\mathrm{d}\widetilde{t}}=\widetilde{z_{2}}\left(\widetilde{\varepsilon}\alpha_{2}(\mu)+\widetilde{c}\widetilde{z_{1}}^{2}+\widetilde{d}\widetilde{z_{2}}^{2}\right),
\end{aligned}\right.\end{equation}
where $\operatorname{sign}(.)$ represents the sign function, and
\begin{equation}\begin{aligned}
&\widetilde{b}=\widetilde{\varepsilon}\frac{\kappa_{12}}{\left|\kappa_{22}\right|},~\widetilde{c}=\widetilde{\varepsilon}\frac{\kappa_{21}}{\left|\kappa_{11}\right|},~\widetilde{d}=\widetilde{\varepsilon}\frac{\kappa_{22}}{\left|\kappa_{22}\right|}=\pm 1.
\end{aligned}\end{equation}
For the planar system (5.6), there are equilibrium points
\begin{equation}\begin{aligned}
&E_{0}=(0,0), \text { for all } \mu_{1}, \mu_{2}, \\
&E_{1}^{\pm}=\left(\pm\sqrt{-\widetilde{\varepsilon}\alpha_{1}(\mu)},0\right), \text { for } \widetilde{\varepsilon}\alpha_{1}(\mu)<0, \\
&E_{2}^{\pm}=\left(0,\pm\sqrt{-\frac{\widetilde{\varepsilon}\alpha_{2}(\mu)}{\widetilde{d}}}\right), \text { for } \frac{\widetilde{\varepsilon}\alpha_{2}(\mu)}{\widetilde{d}}<0, \\
&E_{3}^{\pm\pm}=\left(\pm\sqrt{\frac{\widetilde{b}\widetilde{\varepsilon}\alpha_{2}(\mu)-\widetilde{d}\widetilde{\varepsilon}\alpha_{1}(\mu)}{\widetilde{d}-\widetilde{b}\widetilde{c}}},\pm\sqrt{\frac{\widetilde{c}\widetilde{\varepsilon}\alpha_{1}(\mu)-\widetilde{\varepsilon}\alpha_{2}(\mu)}{\widetilde{d}-\widetilde{b}\widetilde{c}}}\right), \\
&\text { for } \frac{\widetilde{b}\widetilde{\varepsilon}\alpha_{2}(\mu)-\widetilde{d}\widetilde{\varepsilon}\alpha_{1}(\mu)}{\widetilde{d}-\widetilde{b}\widetilde{c}}>0,~\frac{\widetilde{c}\widetilde{\varepsilon}\alpha_{1}(\mu)-\widetilde{\varepsilon}\alpha_{2}(\mu)}{\widetilde{d}-\widetilde{b}\widetilde{c}}>0.
\end{aligned}\end{equation}
In what follows, we denote
\begin{equation*}\begin{aligned}
&r_{1}:=\sqrt{-\frac{\widetilde{\varepsilon}\alpha_{1}(\mu)}{|\kappa_{11}|}},~r_{2}:=\sqrt{-\frac{\widetilde{\varepsilon}\alpha_{2}(\mu)}{\widetilde{d}|\kappa_{22}|}}, \\
&r_{3}:=\sqrt{\frac{\widetilde{b}\widetilde{\varepsilon}\alpha_{2}(\mu)-\widetilde{d}\widetilde{\varepsilon}\alpha_{1}(\mu)}{(\widetilde{d}-\widetilde{b}\widetilde{c})|\kappa_{11}|}},~r_{4}:=\sqrt{\frac{\widetilde{c}\widetilde{\varepsilon}\alpha_{1}(\mu)-\widetilde{\varepsilon}\alpha_{2}(\mu)}{(\widetilde{d}-\widetilde{b}\widetilde{c})|\kappa_{22}|}}.
\end{aligned}\end{equation*}

Depending on the different signs of $\widetilde{b}$, $\widetilde{c}$, $\widetilde{d}$ and $\widetilde{d}-\widetilde{b}\widetilde{c}$, there are 12 distinct types of unfolding for the planar system (5.6), see Chapter 7.5 in \cite{lv50} for detail. Here, we claim that the solutions of the original model (1.1) restrict on the center manifold is homeomorphic to $W(t)=z_{1}(t)\phi_{n_{1}}\gamma_{n_{1}}(x)+z_{2}(t)\phi_{n_{2}}\gamma_{n_{2}}(x)$. Based on the center manifold theorem given by in Chapter 6 in \cite{lv51} and in Chapter 5 in \cite{lv52}, the solutions of the original model (1.1), which the initial functions are in a neighborhood of the positive constant steady state $E_{*}(u_{*},v_{*})$ in $X$, are exponentially convergent to the homeomorphism of the attractors of the solutions restrict on the center manifold (5.6), that is
\begin{equation*}
U(t)\approx \mathcal{H}(W(t))+O(\mathrm{e}^{-\vartheta t}), \text { as } t \rightarrow \infty,
\end{equation*}
where $\mathcal{H}(\cdot)$ is an isomorphic mapping and $\vartheta>0$. By further analysis, the corresponding relationships between the equilibrium points of planar system (5.6) and the solutions of original model (1.1) are given in Table 1.

\begin{table}[!htbp]
	\centering
	{Table 1: Corresponding relationships between the equilibrium points of planar system (5.6) and the solutions of original model (1.1)}
	\label{table:1}
\begin{tabular}{cc}
\hline \text{Equilibrium points} & \text{Solutions of original model (1.1)} \\
 of planar system (5.6) & \\
\hline $E_{0}$ & \text{Positive constant steady state} \\
\hline
$E_{1}^{\pm}$ & \text{Non-constant steady states with} \\
 &  $r_{1}\phi_{n_{1}}\cos(2x/3)$-type spatial distribution \\
\hline
$E_{2}^{\pm}$ & \text{Non-constant steady states with} \\
 &  $r_{2}\phi_{n_{2}}\cos(x)$-type spatial distribution \\
\hline
$E_{3}^{\pm\pm}$ & \text{Non-constant steady states with} \\
 & $r_{3}\phi_{n_{1}}\cos(2x/3)+r_{4}\phi_{n_{2}}\cos(x)$-type spatial distribution \\
\hline
\end{tabular}
\end{table}

More precisely, by combining with (5.1), (5.4), (5.5), (5.6) and (5.7), we have
\begin{equation}\left\{\begin{aligned}
&\frac{\mathrm{d}\widetilde{z_{1}}}{\mathrm{d}\widetilde{t}}=\widetilde{z_{1}}\left(\widetilde{\varepsilon}\alpha_{1}(\mu)+\widetilde{z_{1}}^{2}+\widetilde{b}\widetilde{z_{2}}^{2}\right), \\
&\frac{\mathrm{d}\widetilde{z_{2}}}{\mathrm{d}\widetilde{t}}=\widetilde{z_{2}}\left(\widetilde{\varepsilon}\alpha_{2}(\mu)+\widetilde{c}\widetilde{z_{1}}^{2}+\widetilde{d}\widetilde{z_{2}}^{2}\right),
\end{aligned}\right.\end{equation}
where
\begin{equation*}\begin{aligned}
&\alpha_{1}(\mu)\doteq -0.5638\mu_{1}-0.1041\mu_{2}, \\
&\alpha_{2}(\mu)\doteq -0.9433\mu_{1}-0.1119\mu_{2}, \\
&\widetilde{\varepsilon}=-1,~\widetilde{b}\doteq 0.8599,~\widetilde{c}\doteq -18.2561,~\widetilde{d}=1, \\
&\widetilde{b}\widetilde{\varepsilon}\alpha_{2}(\mu)-\widetilde{d}\widetilde{\varepsilon}\alpha_{1}(\mu)\doteq 0.2474\mu_{1}-0.0078\mu_{2}, \\
&\widetilde{c}\widetilde{\varepsilon}\alpha_{1}(\mu)-\widetilde{\varepsilon}\alpha_{2}(\mu)\doteq -11.2368\mu_{1}-2.0121\mu_{2}, \\
&\widetilde{d}-\widetilde{b}\widetilde{c}\doteq 16.6986.
\end{aligned}\end{equation*}
Moreover, by combining with (5.8), the system (5.9) has zero equilibrium point $E_{0}=(0,0)$ for any $\mu_{1}, \mu_{2} \in \mathbb{R}$, boundary equilibrium points
\begin{equation*}\begin{aligned}
&E_{1}^{\pm}=\left(\pm\sqrt{-0.5638\mu_{1}-0.1041\mu_{2}},0\right), \text{ for } \mu_{2}<-5.4159\mu_{1}, \\
&E_{2}^{\pm}=\left(0,\pm\sqrt{-0.9433\mu_{1}-0.1119\mu_{2}}\right), \text{ for } \mu_{2}<-8.4298\mu_{1},
\end{aligned}\end{equation*}
and interior equilibrium points
\begin{equation*}
E_{3}^{\pm\pm}=\left(\pm\sqrt{\frac{0.2474\mu_{1}-0.0078\mu_{2}}{16.6986}},\pm\sqrt{\frac{-11.2368\mu_{1}-2.0121\mu_{2}}{16.6986}}\right)
\end{equation*}
for $\mu_{2}<31.7179\mu_{1}$ and $\mu_{2}<-5.5846\mu_{1}$. According to the above four inequalities which are used to ensure the existence of the equilibrium points $E_{1}^{\pm}$, $E_{2}^{\pm}$ and $E_{3}^{\pm\pm}$, we can define the critical bifurcation lines in the $\mu_{1}-\mu_{2}$ plane as follows, i.e.,
\begin{equation*}\begin{aligned}
&CL_{1}: \mu_{2}=-5.4159\mu_{1}, \\
&CL_{2}: \mu_{2}=-8.4298\mu_{1}, \\
&CL_{3}: \mu_{2}=31.7179\mu_{1},~\mu_{1}<0, \\
&CL_{4}: \mu_{2}=-5.5846\mu_{1},~\mu_{1}>0.
\end{aligned}\end{equation*}

\begin{figure}[!htbp]
\centering
\includegraphics[width=3.5in]{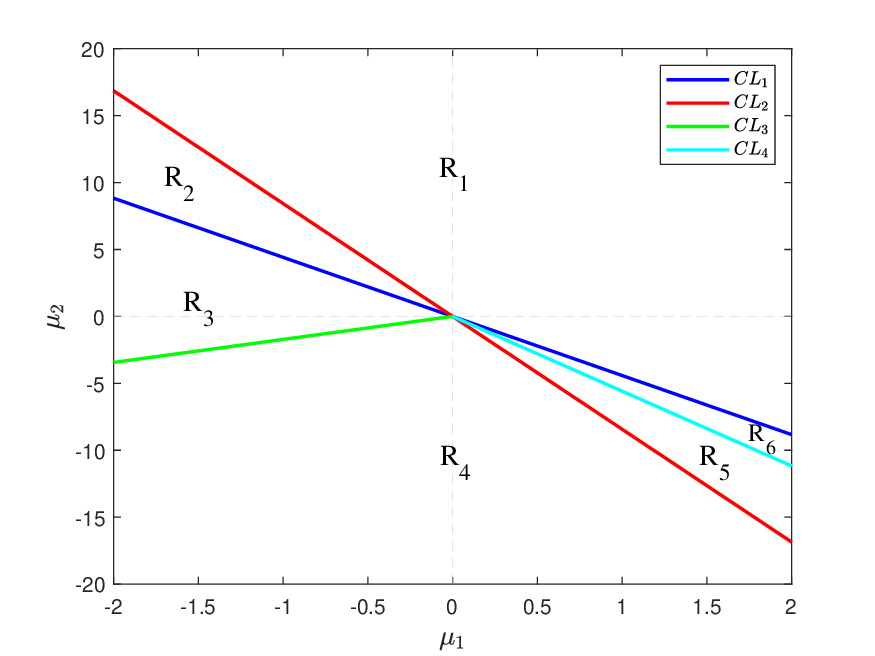}
\caption{Bifurcation set of (5.9) near the Turing-Turing bifurcation point $(d_{u}^{*},\alpha^{*})\doteq (0.0253,0.5527)$ in $\mu_{1}-\mu_{2}$ plane.}
\label{fig:2}
\end{figure}

From Fig. \ref{fig:2}, we can see that the $\mu_{1}-\mu_{2}$ plane is divided into six different regions by the above four critical bifurcation lines, and the six different regions are marked as $R_{j}(j=1,2,3,4,5,6)$. Furthermore, it is easy to obtain that the normal form (5.9) has equilibrium point $E_{0}$ in region $R_{1}$, the normal form (5.9) has equilibrium points $E_{0}$ and $E_{2}^{\pm}$ in region $R_{2}$, the normal form (5.9) has equilibrium points $E_{0}$, $E_{1}^{\pm}$ and $E_{2}^{\pm}$ in region $R_{3}$, the normal form (5.9) has equilibrium points $E_{0}$, $E_{1}^{\pm}$, $E_{2}^{\pm}$ and $E_{3}^{\pm\pm}$ in region $R_{4}$, the normal form (5.9) has equilibrium points $E_{0}$, $E_{1}^{\pm}$ and $E_{3}^{\pm}$ in region $R_{5}$, and the normal form (5.9) has equilibrium points $E_{0}$ and $E_{1}^{\pm}$ in region $R_{6}$.

\begin{figure}[!htbp]
\centering
\includegraphics[width=3in]{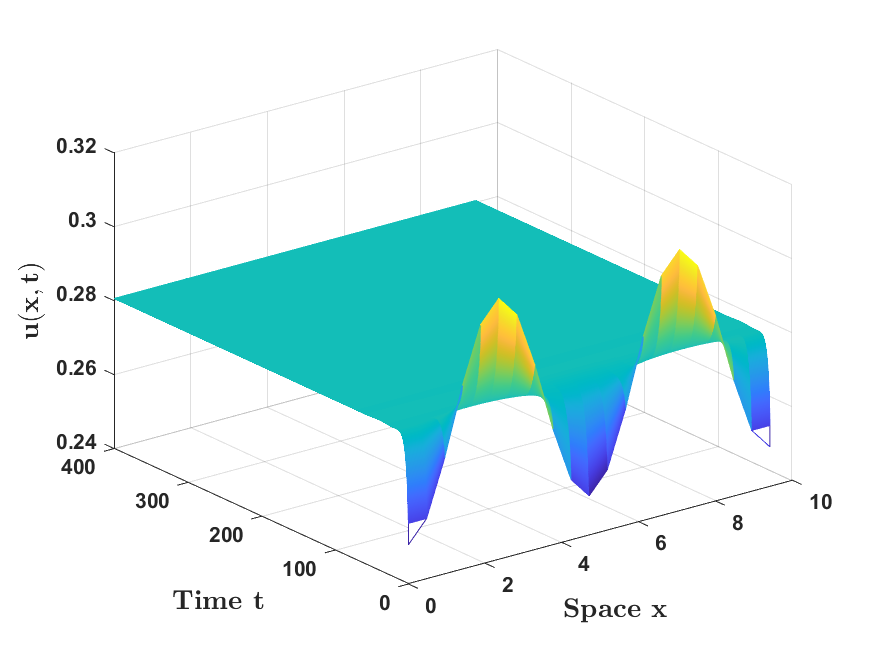}
\includegraphics[width=3in]{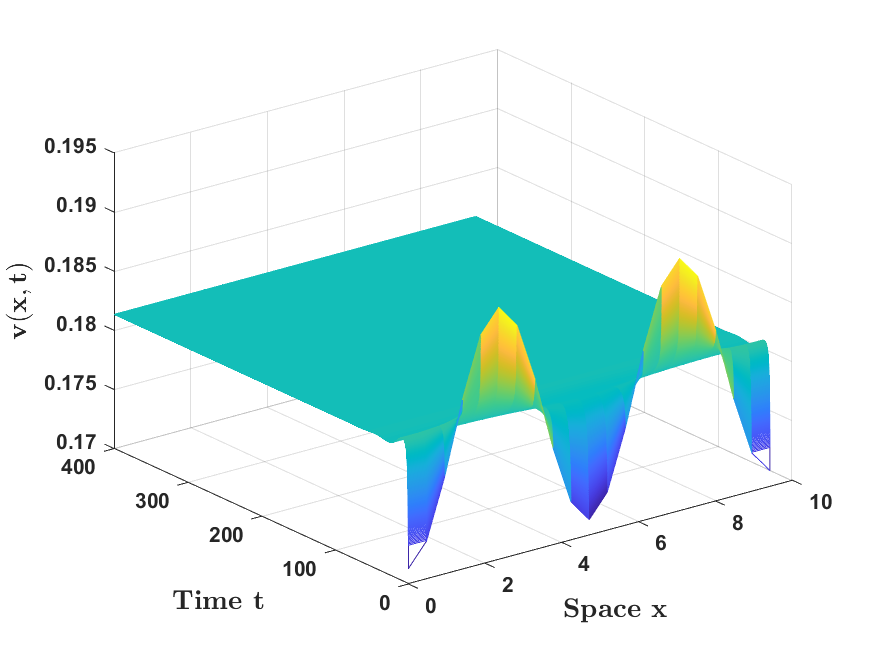} \\
\text{(a)} \hspace{7.8cm} \text{(b)} \\
\caption{For $(\mu_{1},\mu_{2})=(0.1,0.1) \in R_{1}$, the positive constant steady state $E_{*}(u_{*},v_{*})$ of the model (1.1) is locally asymptotically stable. (a) and (b) are the whole evolutionary process for $u(x,t)$ and $v(x,t)$ of the model (1.1), respectively. The initial functions are $u_{0}(x)=0.2806-0.03\cos(4x/3)$ and $v_{0}(x)=0.1813-0.01\cos(4x/3)$.}
\label{fig:3}
\end{figure}

\begin{figure}[!htbp]
\centering
\includegraphics[width=2in]{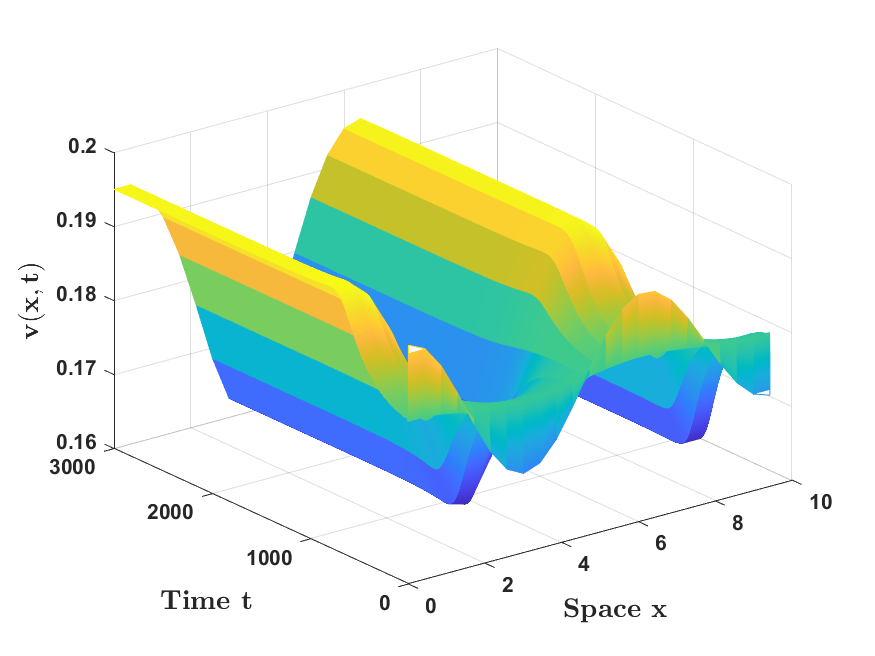}
\includegraphics[width=2in]{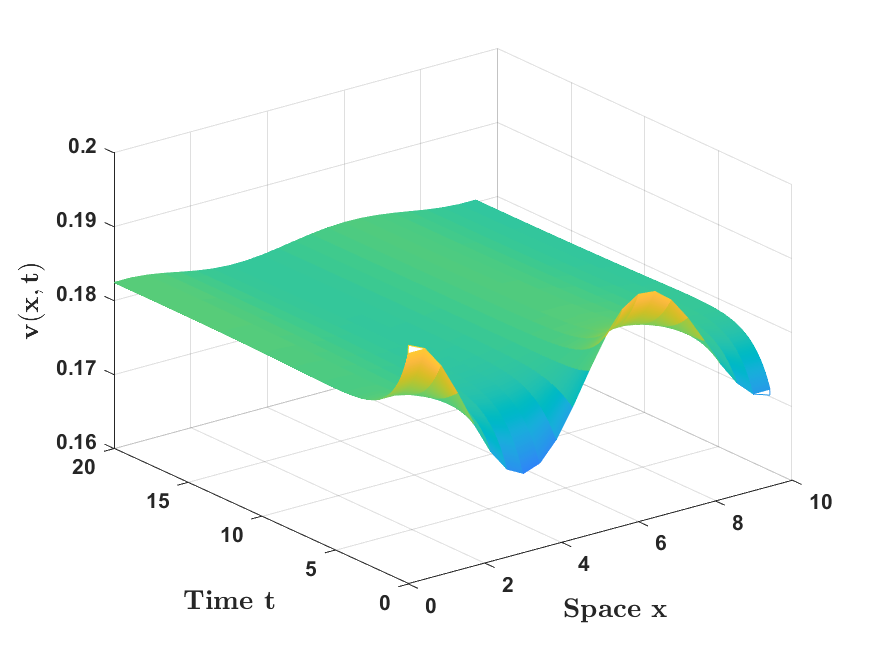}
\includegraphics[width=2in]{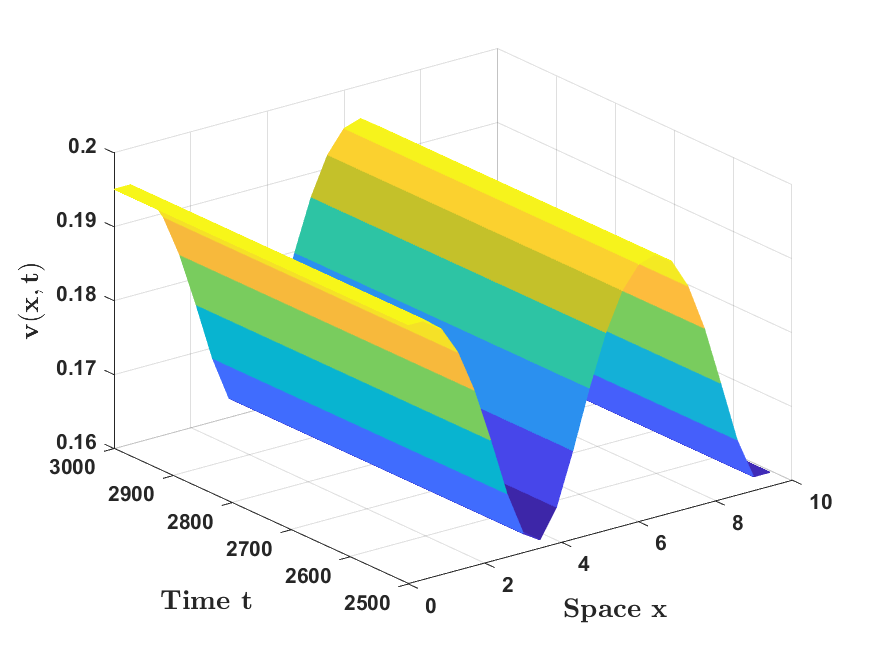} \\
\text{(a)} \hspace{4.6cm} \text{(b)} \hspace{4.6cm} \text{(c)} \\
\includegraphics[width=2in]{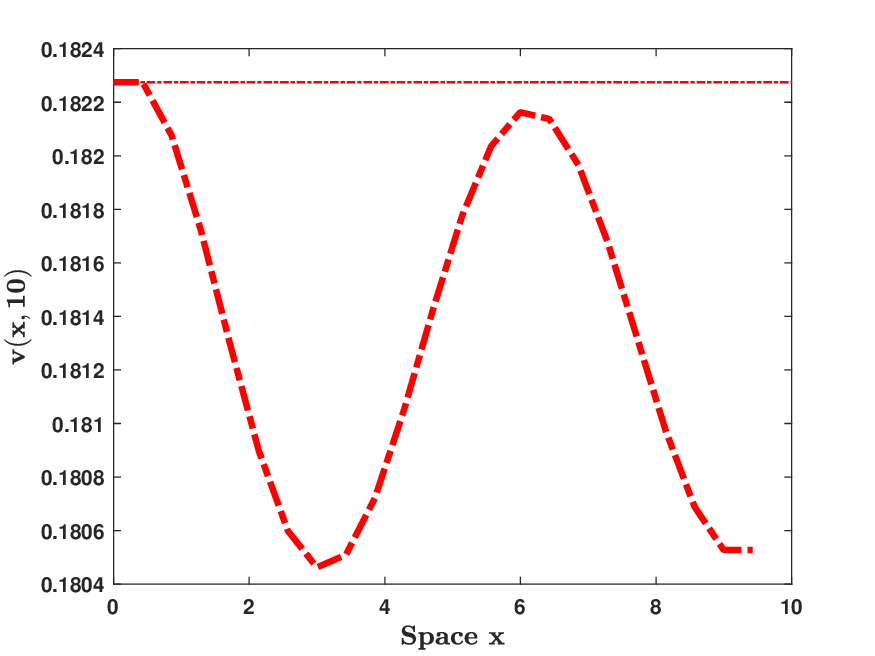}
\includegraphics[width=2in]{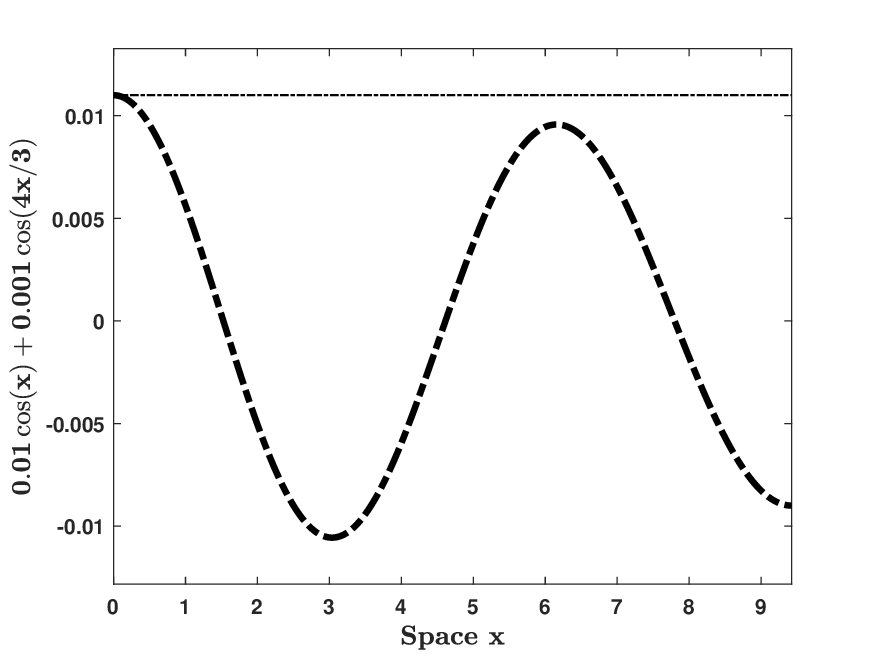}
\includegraphics[width=2in]{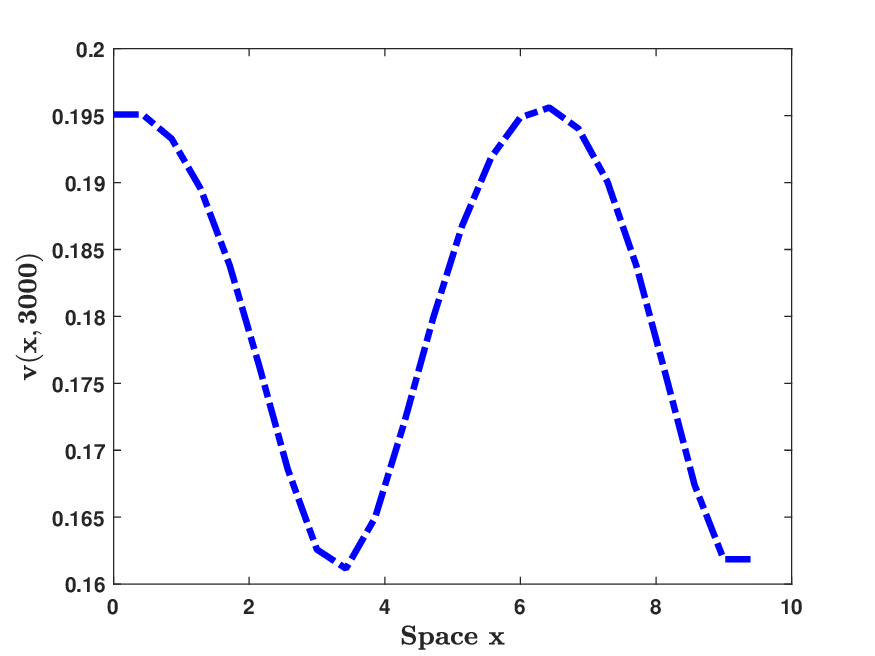} \\
\text{(d)} \hspace{4.6cm} \text{(e)} \hspace{4.6cm} \text{(f)} \\
\caption{For $(\mu_{1},\mu_{2})=(0.002,-0.1) \in R_{4}$, two stable spatially inhomogeneous steady states with $r_{1}\phi_{n_{1}}\cos(x)$-type spatial distribution exist. (a), (b) and (c) are the long-term, transient and final behaviors of $v(x,t)$, respectively, where the shape of the final behavior of $v(x,t)$ is $\cos(x)$. (d) is the concentration profile for $v(x,t)$ at $t=10$, (e) is the image of the function $0.01\cos(x)+0.001\cos(4x/3)$, (f) is the concentration profile for $v(x,t)$ at $t=3000$. The initial functions are $u_{0}(x)=0.2806+0.01\cos(x)+0.001\cos(4x/3)$ and $v_{0}(x)=0.1813+0.01\cos(x)+0.001\cos(4x/3)$.}
\label{fig:4}
\end{figure}

\begin{figure}[!htbp]
\centering
\includegraphics[width=2in]{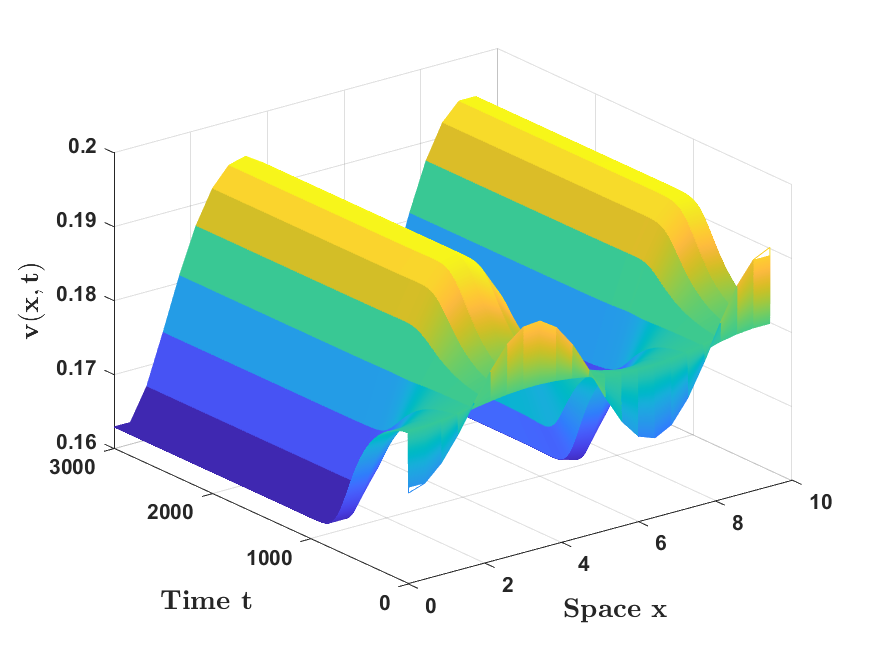}
\includegraphics[width=2in]{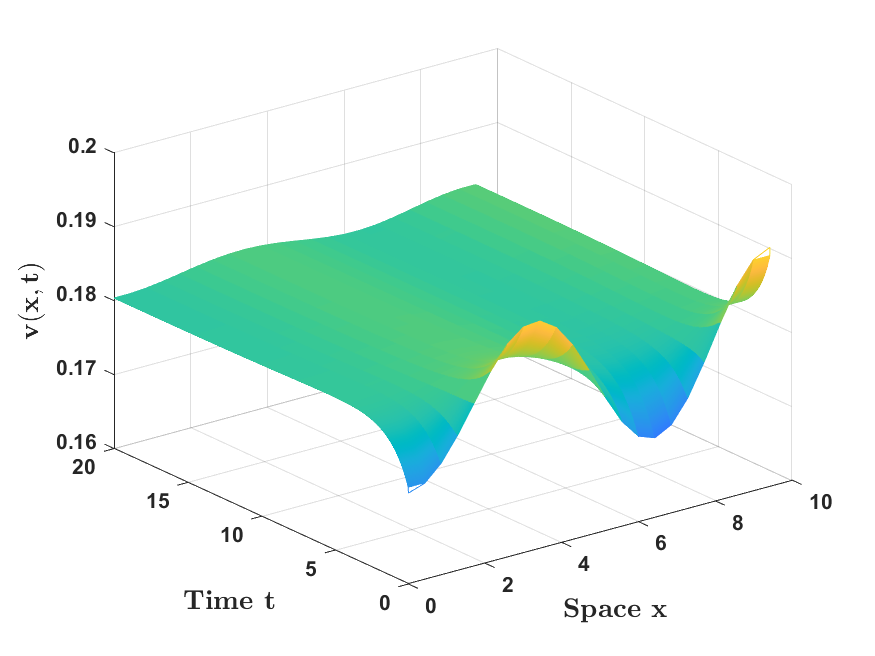}
\includegraphics[width=2in]{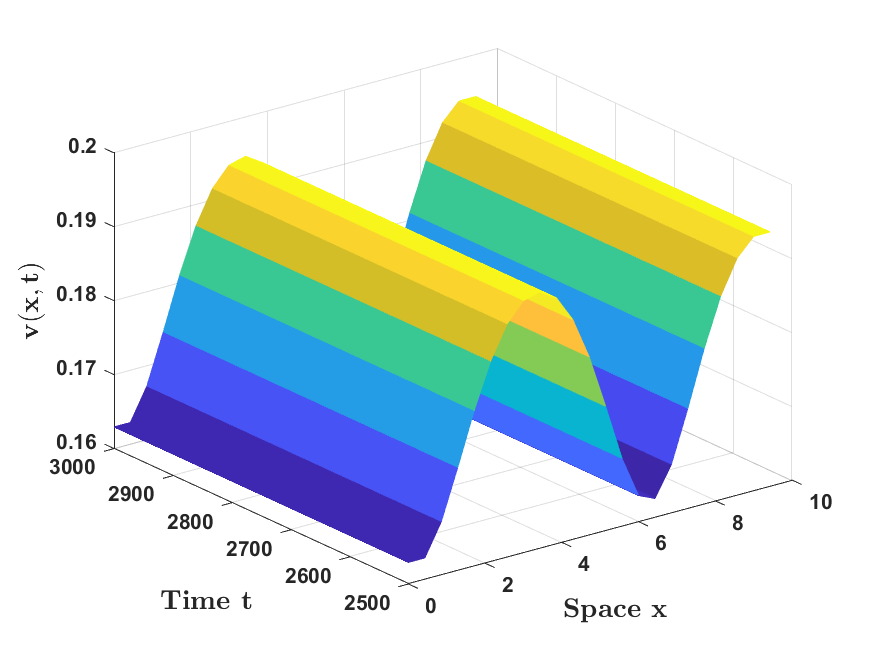} \\
\text{(a)} \hspace{4.6cm} \text{(b)} \hspace{4.6cm} \text{(c)} \\
\includegraphics[width=2in]{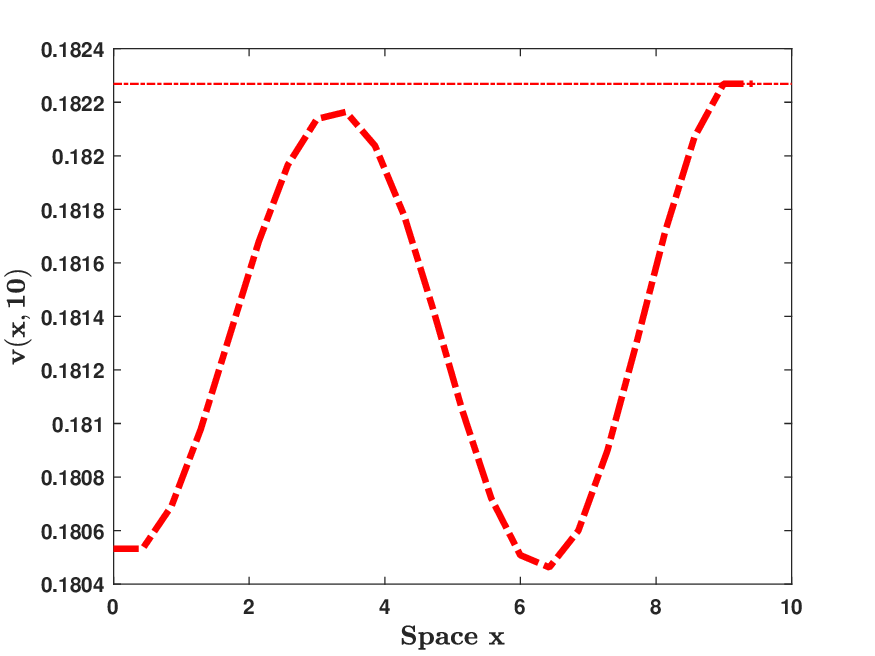}
\includegraphics[width=2in]{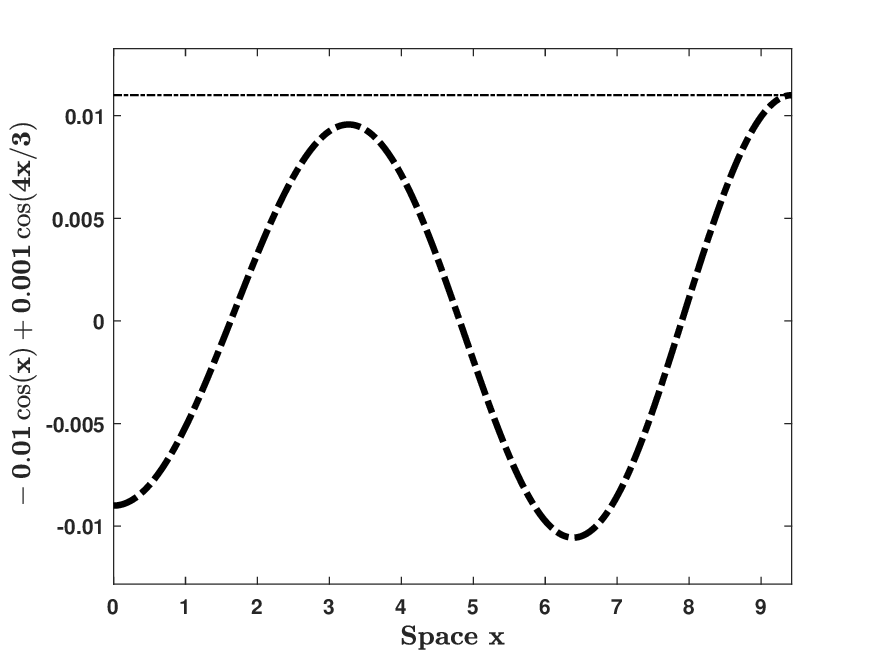}
\includegraphics[width=2in]{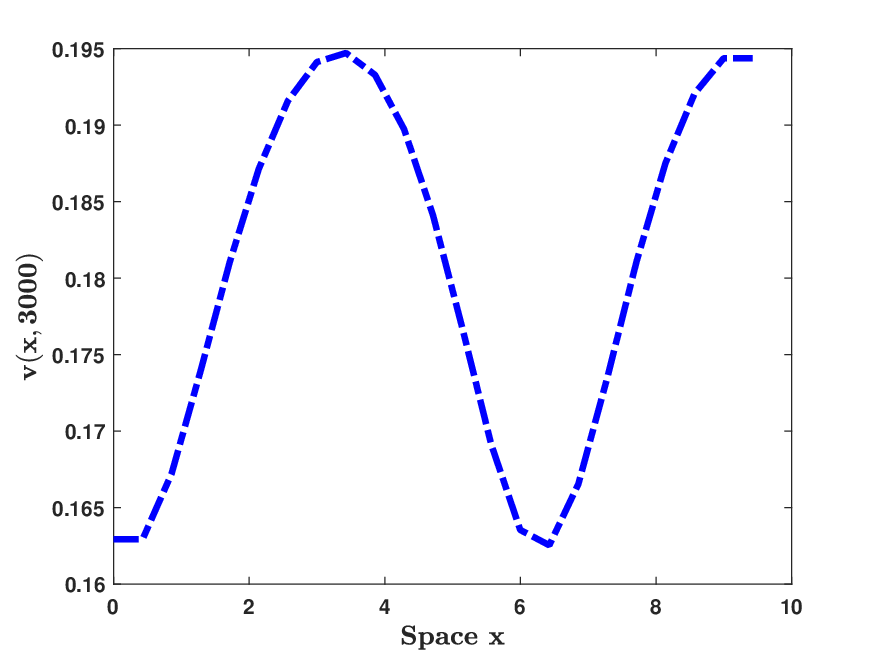} \\
\text{(d)} \hspace{4.6cm} \text{(e)} \hspace{4.6cm} \text{(f)} \\
\caption{For $(\mu_{1},\mu_{2})=(0.002,-0.1) \in R_{4}$, two stable spatially inhomogeneous steady states with $r_{1}\phi_{n_{1}}\cos(x)$-type spatial distribution exist. (a), (b) and (c) are the long-term, transient and final behaviors of $v(x,t)$, respectively, where the shape of the final behavior of $v(x,t)$ is $-\cos(x)$. (d) is the concentration profile for $v(x,t)$ at $t=10$, (e) is the image of the function $-0.01\cos(x)+0.001\cos(4x/3)$, (f) is the concentration profile for $v(x,t)$ at $t=3000$. The initial functions are $u_{0}(x)=0.2806-0.01\cos(x)+0.001\cos(4x/3)$ and $v_{0}(x)=0.1813-0.01\cos(x)+0.001\cos(4x/3)$.}
\label{fig:5}
\end{figure}

\begin{figure}[!htbp]
\centering
\includegraphics[width=2in]{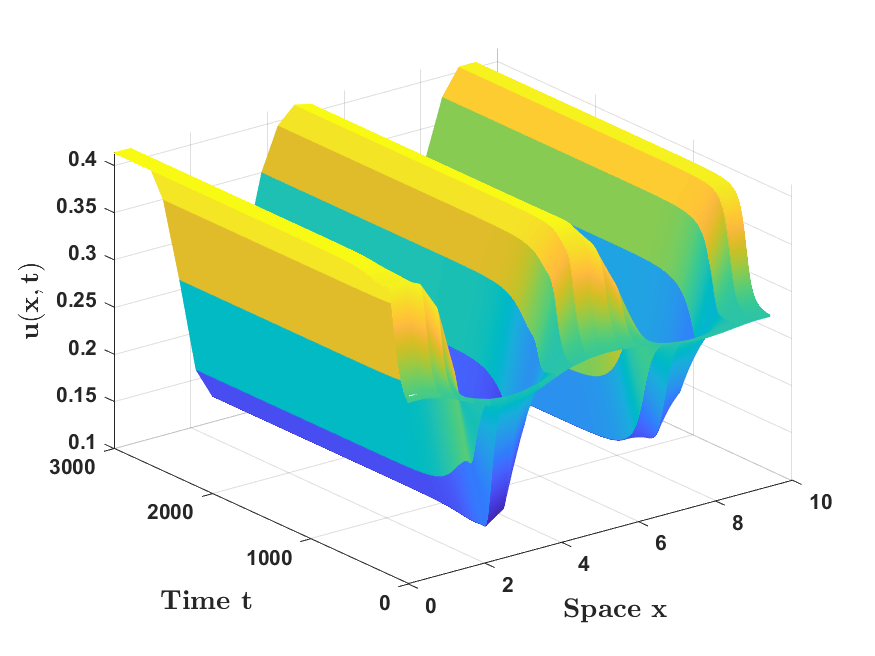}
\includegraphics[width=2in]{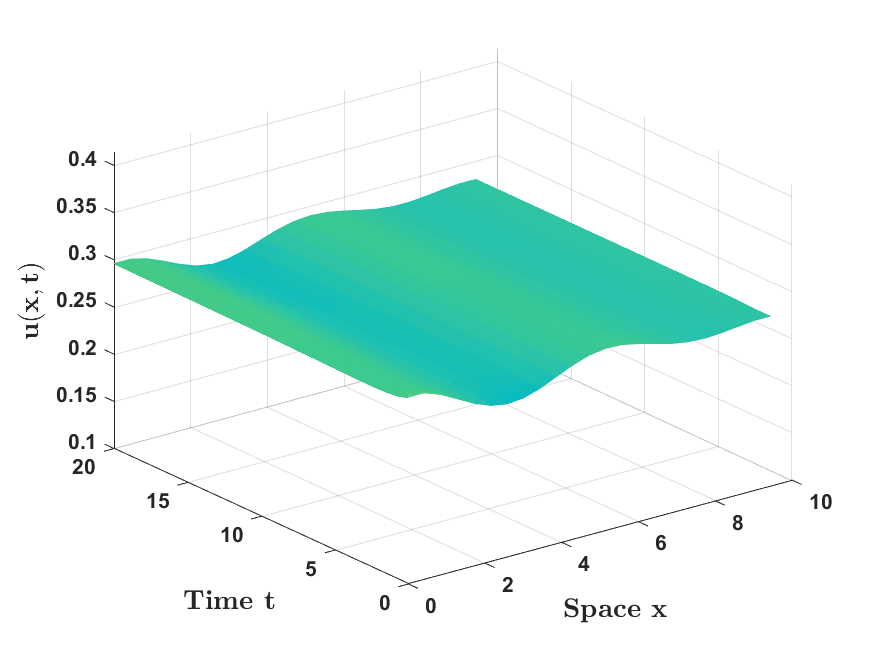}
\includegraphics[width=2in]{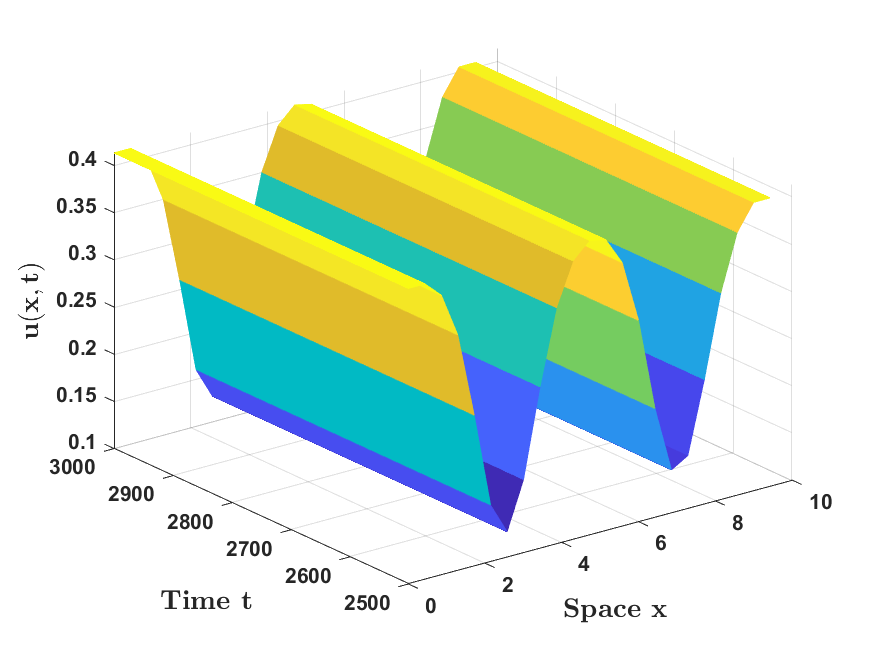} \\
\text{(a)} \hspace{4.6cm} \text{(b)} \hspace{4.6cm} \text{(c)} \\
\includegraphics[width=2in]{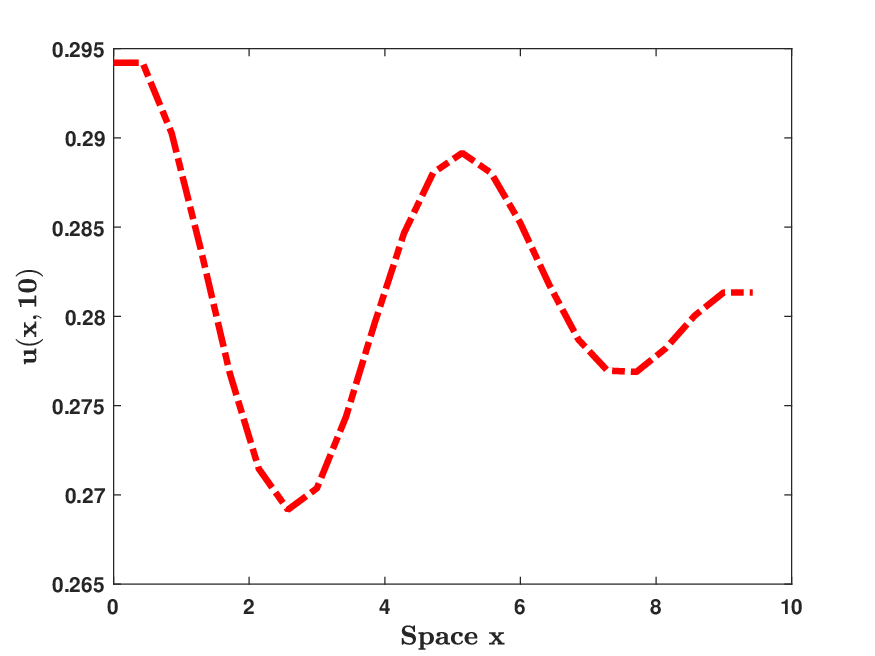}
\includegraphics[width=2in]{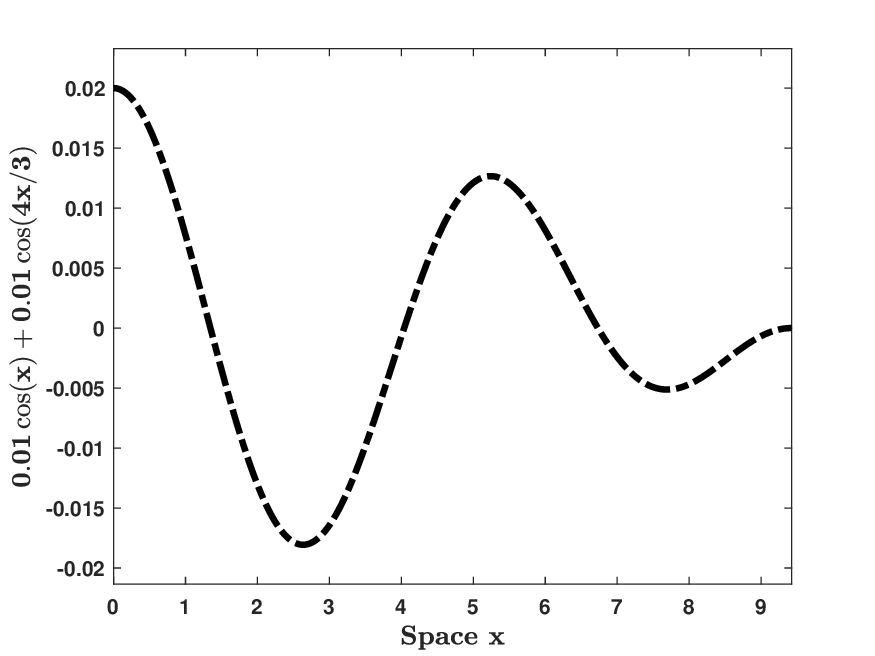}
\includegraphics[width=2in]{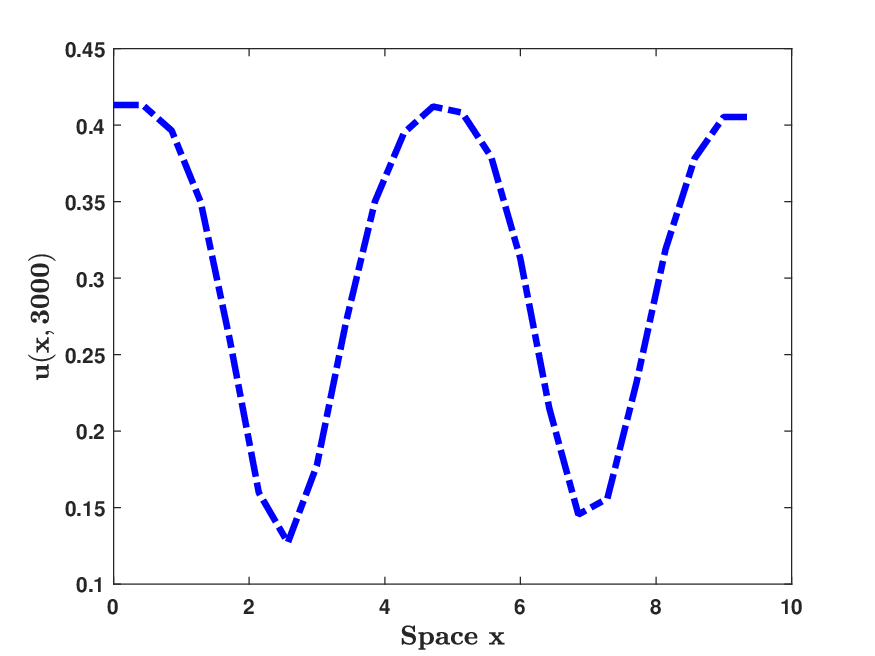} \\
\text{(d)} \hspace{4.6cm} \text{(e)} \hspace{4.6cm} \text{(f)} \\
\caption{For $(\mu_{1},\mu_{2})=(-0.002,-0.2) \in R_{4}$, two stable spatially inhomogeneous steady states with $r_{2}\phi_{n_{2}}\cos(4x/3)$-type spatial distribution exist. (a), (b) and (c) are the long-term, transient and final behaviors of $u(x,t)$, respectively, where the shape of the final behavior of $u(x,t)$ is $\cos(4x/3)$. (d) is the concentration profile for $u(x,t)$ at $t=10$, (e) is the image of the function $0.01\cos(x)+0.01\cos(4x/3)$, (f) is the concentration profile for $u(x,t)$ at $t=3000$. The initial functions are $u_{0}(x)=0.2806+0.01\cos(x)+0.01\cos(4x/3)$ and $v_{0}(x)=0.1813+0.01\cos(x)+0.01\cos(4x/3)$.}
\label{fig:6}
\end{figure}

\begin{figure}[!htbp]
\centering
\includegraphics[width=2in]{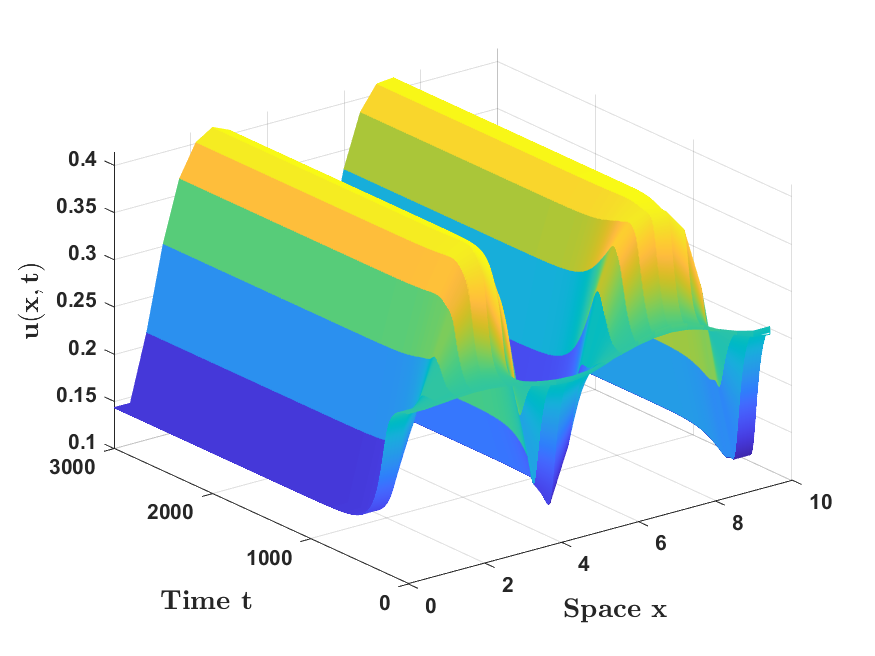}
\includegraphics[width=2in]{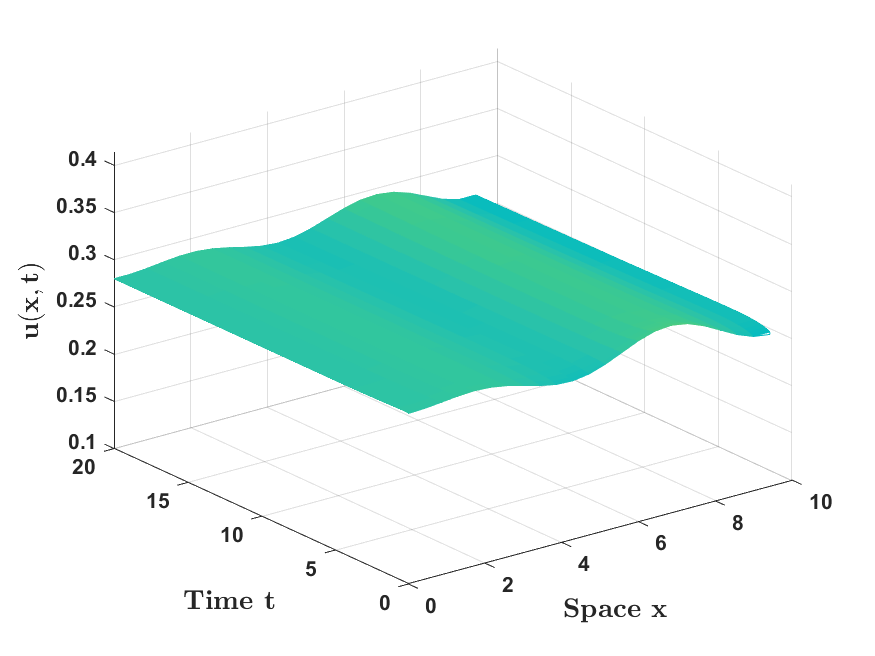}
\includegraphics[width=2in]{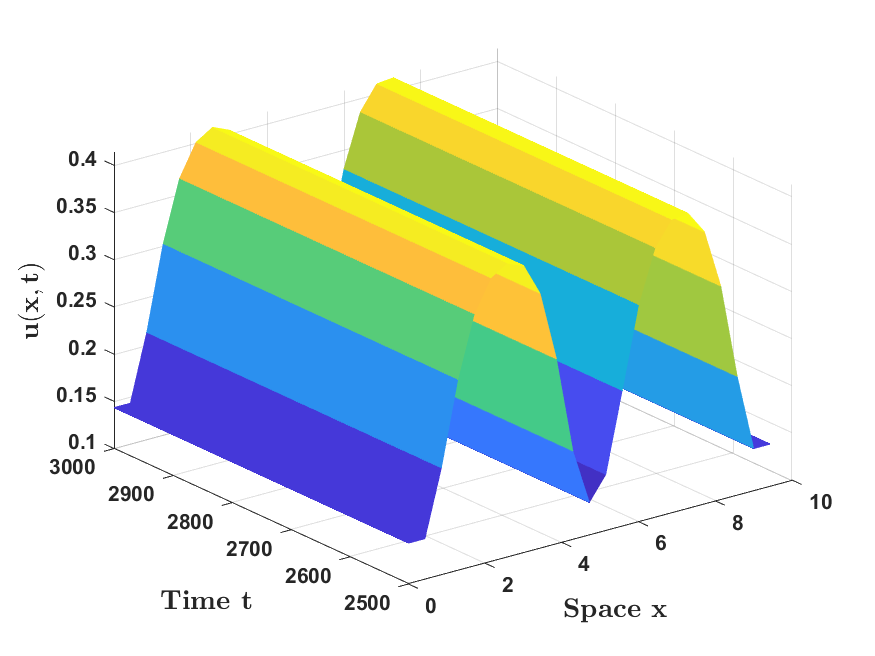} \\
\text{(a)} \hspace{4.6cm} \text{(b)} \hspace{4.6cm} \text{(c)} \\
\includegraphics[width=2in]{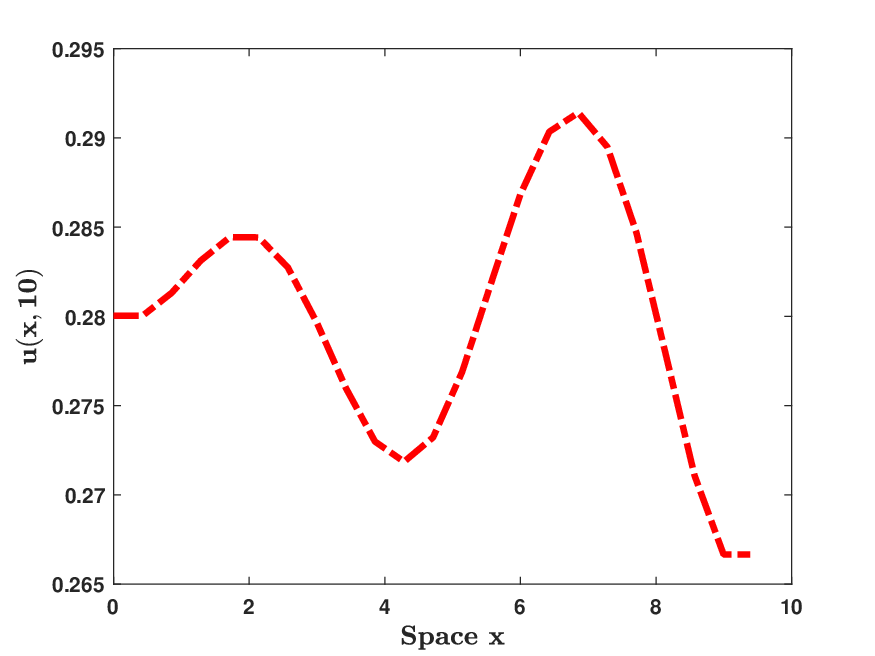}
\includegraphics[width=2in]{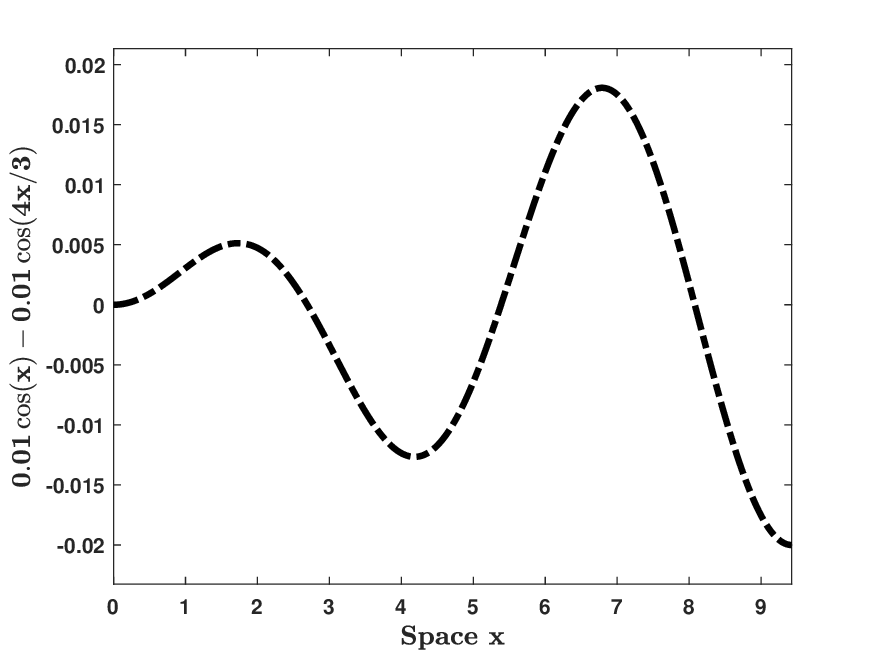}
\includegraphics[width=2in]{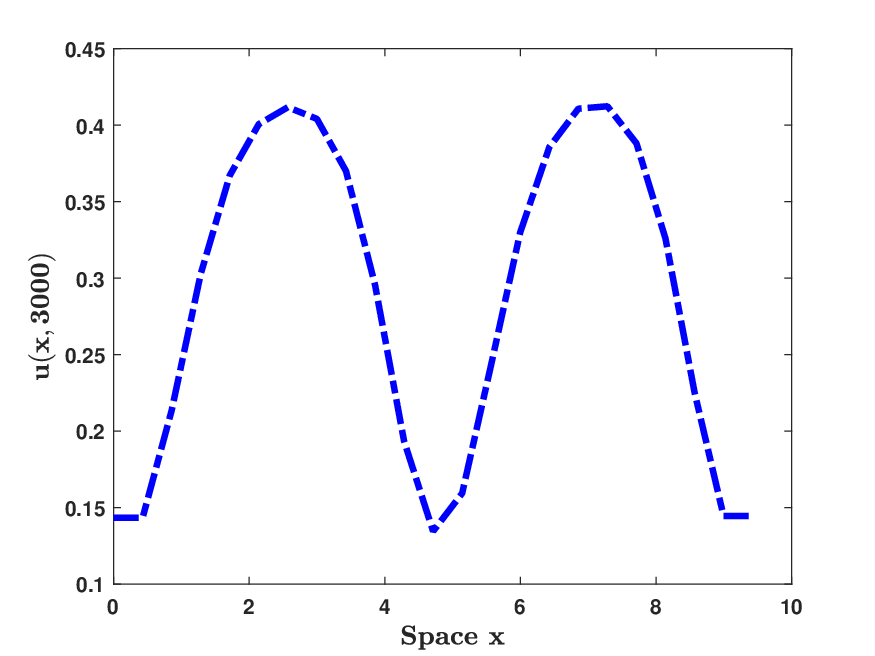} \\
\text{(d)} \hspace{4.6cm} \text{(e)} \hspace{4.6cm} \text{(f)} \\
\caption{For $(\mu_{1},\mu_{2})=(-0.002,-0.2) \in R_{4}$, two stable spatially inhomogeneous steady states with $r_{2}\phi_{n_{2}}\cos(4x/3)$-type spatial distribution exist. (a), (b) and (c) are the long-term, transient and final behaviors of $u(x,t)$, respectively, where the shape of the final behavior of $u(x,t)$ is $-\cos(4x/3)$. (d) is the concentration profile for $u(x,t)$ at $t=10$, (e) is the image of the function $0.01\cos(x)-0.01\cos(4x/3)$, (f) is the concentration profile for $u(x,t)$ at $t=3000$. The initial functions are $u_{0}(x)=0.2806+0.01\cos(x)-0.01\cos(4x/3)$ and $v_{0}(x)=0.1813+0.01\cos(x)-0.01\cos(4x/3)$.}
\label{fig:7}
\end{figure}

\begin{figure}[!htbp]
\centering
\includegraphics[width=2in]{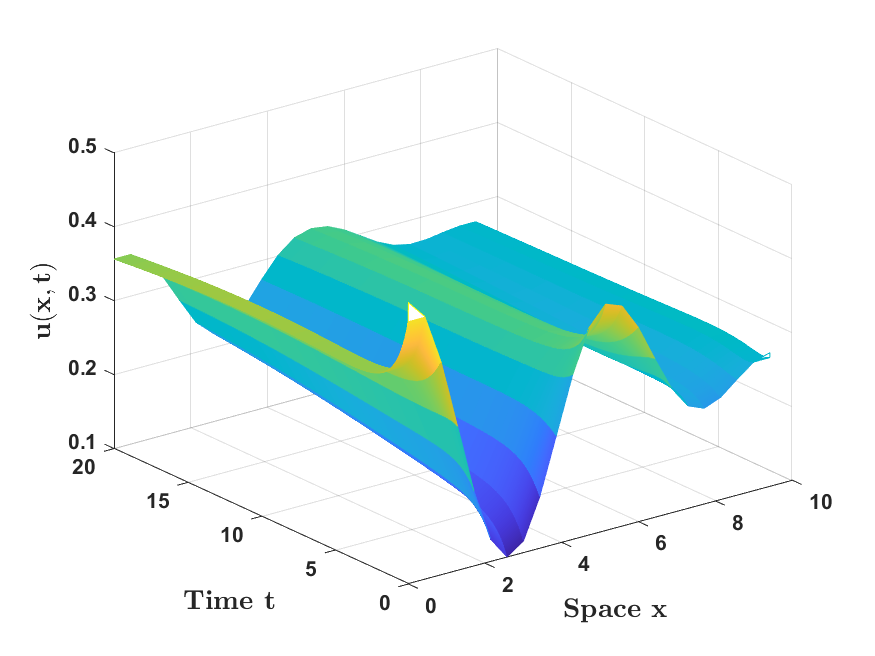}
\includegraphics[width=2in]{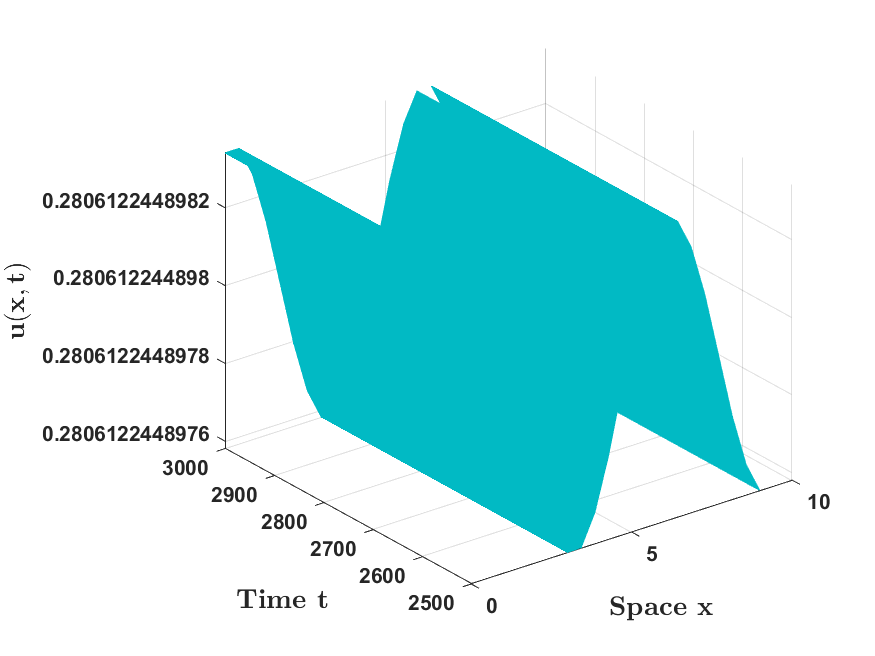} \\
\text{(a)} \hspace{4.6cm} \text{(b)} \\
\includegraphics[width=2in]{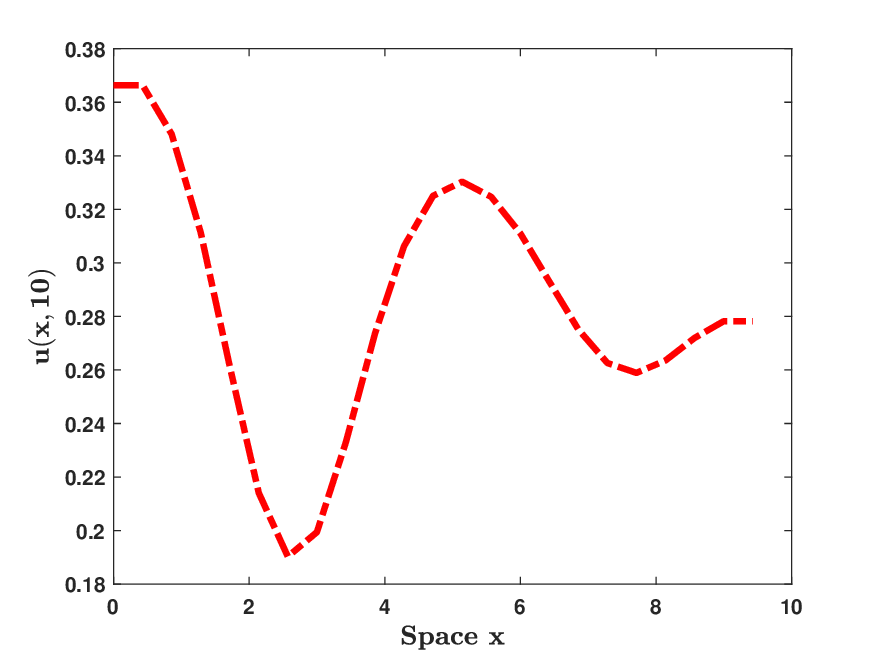}
\includegraphics[width=2in]{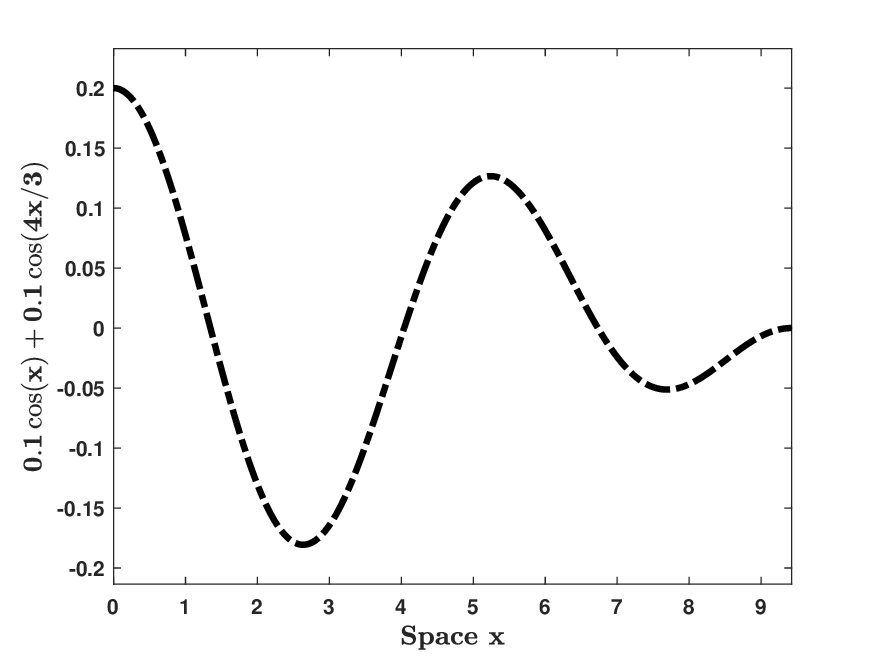}
\includegraphics[width=2in]{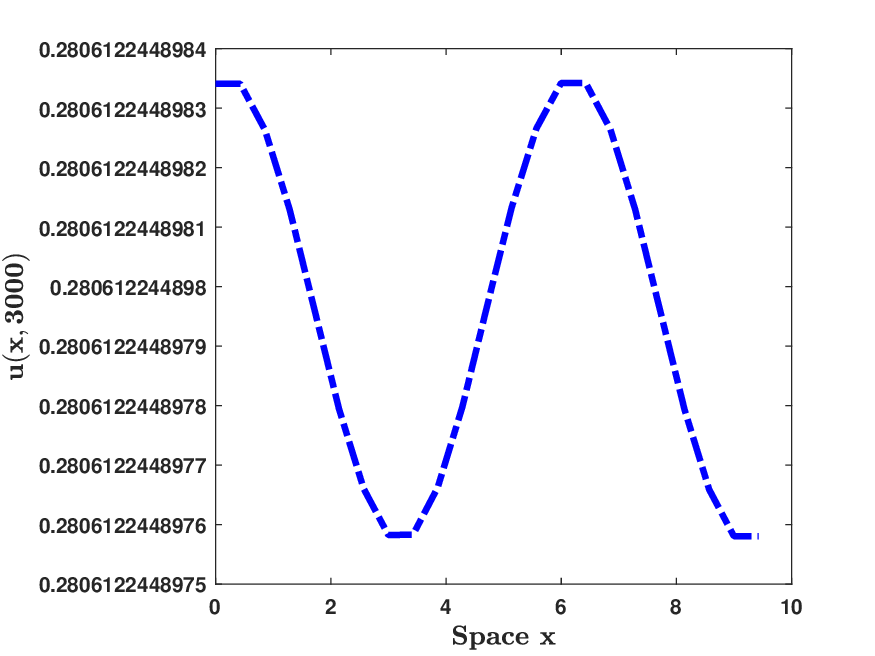} \\
\text{(c)} \hspace{4.6cm} \text{(d)} \hspace{4.6cm} \text{(e)} \\
\caption{For $(\mu_{1},\mu_{2})=(0.02,-0.14) \in R_{5}$, two stable spatially inhomogeneous steady states with $r_{1}\phi_{n_{1}}\cos(x)$-type spatial distribution exist. (a) and (b) are the transient and final behaviors of $u(x,t)$, respectively, where the shape of the final behavior of $u(x,t)$ is $\cos(x)$. (c) is the concentration profile for $u(x,t)$ at $t=10$, (d) is the image of the function $0.1\cos(x)+0.1\cos(4x/3)$, (e) is the concentration profile for $u(x,t)$ at $t=3000$. The initial functions are $u_{0}(x)=0.2806+0.1\cos(x)+0.1\cos(4x/3)$ and $v_{0}(x)=0.1813+0.1\cos(x)+0.1\cos(4x/3)$.}
\label{fig:8}
\end{figure}

\begin{figure}[!htbp]
\centering
\includegraphics[width=2in]{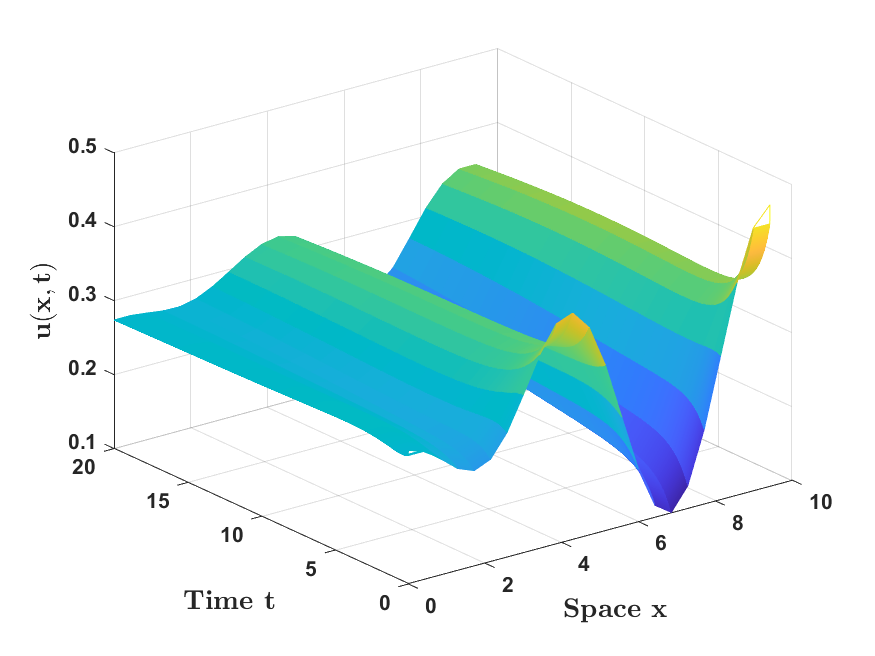}
\includegraphics[width=2in]{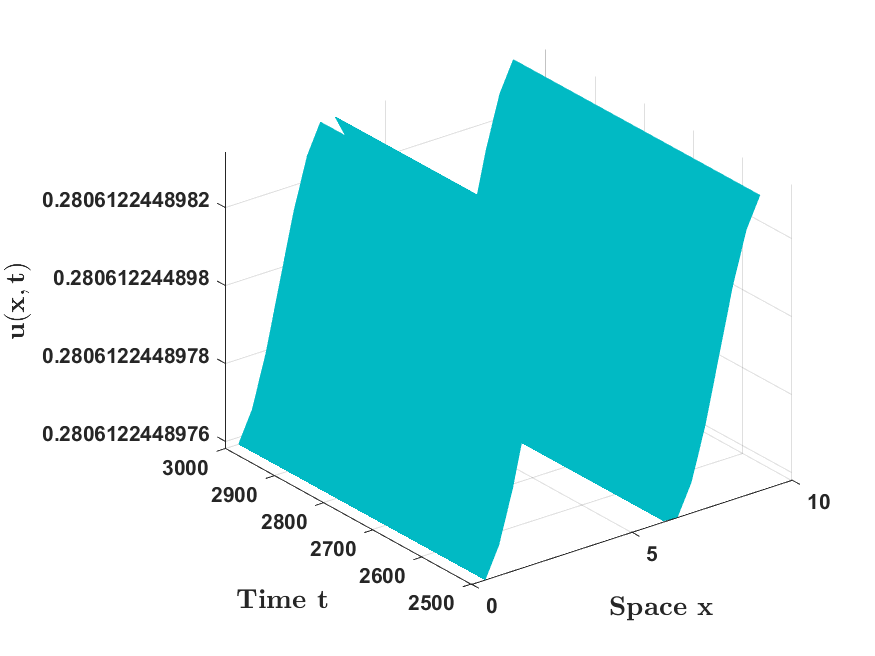} \\
\text{(a)} \hspace{4.6cm} \text{(b)} \\
\includegraphics[width=2in]{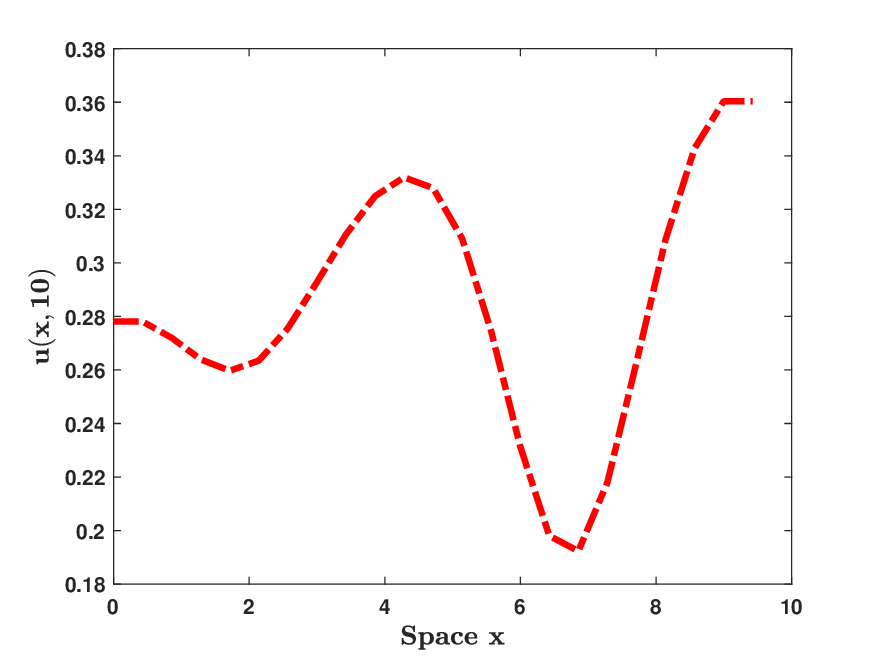}
\includegraphics[width=2in]{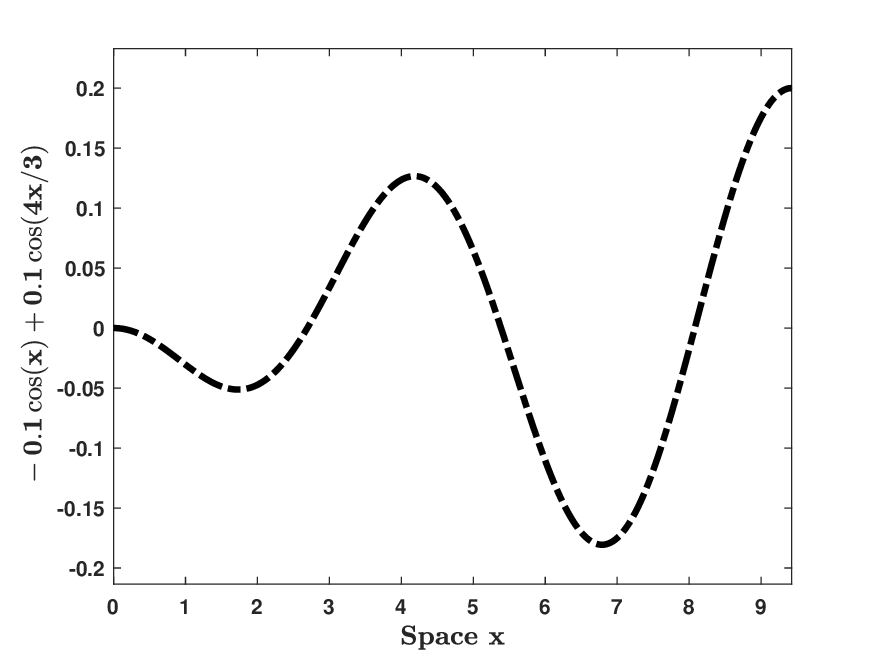}
\includegraphics[width=2in]{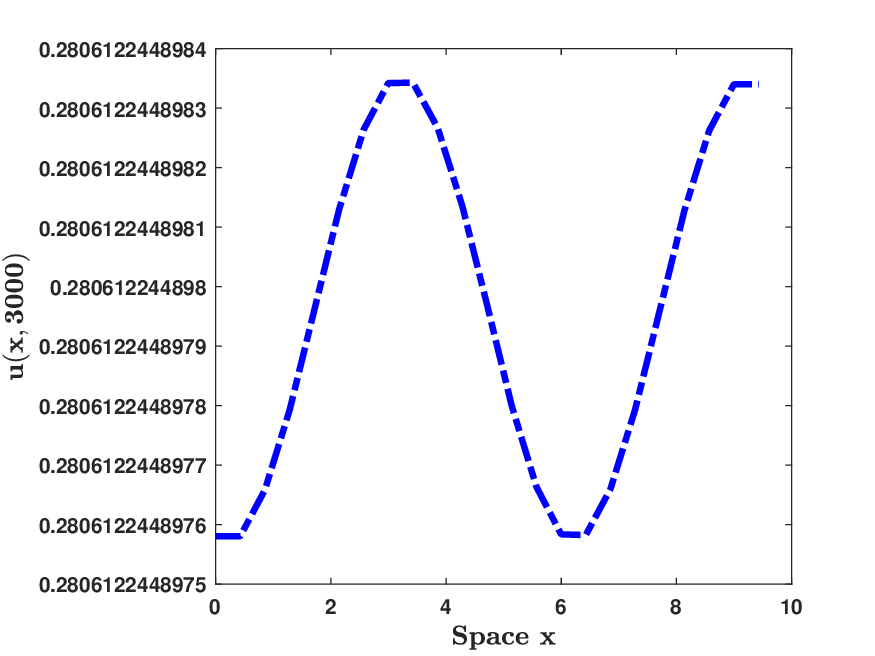} \\
\text{(c)} \hspace{4.6cm} \text{(d)} \hspace{4.6cm} \text{(e)} \\
\caption{For $(\mu_{1},\mu_{2})=(0.02,-0.14) \in R_{5}$, two stable spatially inhomogeneous steady states with $r_{1}\phi_{n_{1}}\cos(x)$-type spatial distribution exist. (a) and (b) are the transient and final behaviors of $u(x,t)$, respectively, where the shape of the final behavior of $u(x,t)$ is $-\cos(x)$. (c) is the concentration profile for $u(x,t)$ at $t=10$, (d) is the image of the function $-0.1\cos(x)+0.1\cos(4x/3)$, (e) is the concentration profile for $u(x,t)$ at $t=3000$. The initial functions are $u_{0}(x)=0.2806-0.1\cos(x)+0.1\cos(4x/3)$ and $v_{0}(x)=0.1813-0.1\cos(x)+0.1\cos(4x/3)$.}
\label{fig:9}
\end{figure}

More precisely, when $(\mu_{1},\mu_{2})$ is chosen in these six different regions, that is, when the bifurcation values $(d_{u},\alpha)=(d_{u}^{*}+\mu_{1},\alpha^{*}+\mu_{2})$ are chosen close to the $(3,4)$-mode Turing-Turing bifurcation point $(d_{u}^{*},\alpha^{*})\doteq (0.0253,0.5527)$, the model (1.1) shows different dynamics. That is,

(1) when $(\mu_{1},\mu_{2}) \in R_{1}$, the positive constant steady state $E_{*}(u_{*},v_{*})$ of the model (1.1) is locally asymptotically stable, and when $(\mu_{1},\mu_{2}) \notin R_{1}$, the positive constant steady state $E_{*}(u_{*},v_{*})$ of the model (1.1) is unstable. By choosing $(\mu_{1},\mu_{2})=(0.1,0.1) \in R_{1}$, i.e., $(d_{u},\alpha)\doteq (0.1253,0.6527)$, and by further analysis, we can see that the positive constant steady state $E_{*}(u_{*},v_{*})$ of the model (1.1) is locally asymptotically stable, see Fig. \ref{fig:3} for detail, and the initial functions are $u_{0}(x)=0.2806-0.03\cos(4x/3)$ and $v_{0}(x)=0.1813-0.01\cos(4x/3)$.

(2) when $(\mu_{1},\mu_{2}) \in R_{2}$, there are two stable spatially inhomogeneous steady states with $r_{2}\phi_{n_{2}}\cos(4x/3)$-type spatial distribution bifurcating from the positive constant steady state $E_{*}(u_{*},v_{*})$ as $(\mu_{1},\mu_{2})$ crosses $CL_{2}$.

(3) when $(\mu_{1},\mu_{2}) \in R_{3}$, the two stable spatially inhomogeneous steady states with $r_{2}\phi_{n_{2}}\cos(4x/3)$-type spatial distribution still remain, and the model (1.1) presents two unstable spatially inhomogeneous steady states with $r_{1}\phi_{n_{1}}\cos(x)$-type spatial distribution bifurcating from the positive constant steady state $E_{*}(u_{*},v_{*})$ as $(\mu_{1},\mu_{2})$ crosses $CL_{1}$.

(4) when $(\mu_{1},\mu_{2}) \in R_{4}$, two spatially inhomogeneous steady states with $r_{1}\phi_{n_{1}}\cos(x)$-type spatial distribution are stable as well as four superposed steady states possessing the saddle type and $r_{3}\phi_{n_{1}}\cos(x)+r_{4}\phi_{n_{2}}\cos(4x/3)$-type spatial distribution appear as $(\mu_{1},\mu_{2})$ crosses $CL_{3}$. Thus the model (1.1) exhibits the quad-stability of the coexistence of four spatially inhomogeneous steady states, accompanying that two stable spatially inhomogeneous steady states with $r_{2}\phi_{n_{2}}\cos(4x/3)$-type spatial distribution still persist. By choosing $(\mu_{1},\mu_{2})=(0.002,-0.1) \in R_{4}$, i.e., $(d_{u},\alpha)\doteq (0.0273,0.4527)$, from Fig. \ref{fig:4}, which corresponds to the initial functions $u_{0}(x)=0.2806+0.01\cos(x)+0.001\cos(4x/3)$ and $v_{0}(x)=0.1813+0.01\cos(x)+0.001\cos(4x/3)$, and from Fig. \ref{fig:5}, which corresponds to the initial functions $u_{0}(x)=0.2806-0.01\cos(x)+0.001\cos(4x/3)$ and $v_{0}(x)=0.1813-0.01\cos(x)+0.001\cos(4x/3)$, we can see that two stable spatially inhomogeneous steady states possessing $r_{1}\phi_{n_{1}}\cos(x)$-type spatial distribution exist. Furthermore, by choosing $(\mu_{1},\mu_{2})=(-0.002,-0.2) \in R_{4}$, i.e., $(d_{u},\alpha)\doteq (0.0233,0.3527)$, from Fig. \ref{fig:6}, which corresponds to the initial functions $u_{0}(x)=0.2806+0.01\cos(x)+0.01\cos(4x/3)$ and $v_{0}(x)=0.1813+0.01\cos(x)+0.01\cos(4x/3)$, and from Fig. \ref{fig:7}, which corresponds to the initial functions $u_{0}(x)=0.2806+0.01\cos(x)-0.01\cos(4x/3)$ and $v_{0}(x)=0.1813+0.01\cos(x)-0.01\cos(4x/3)$, we can see that two stable spatially inhomogeneous steady states possessing $r_{2}\phi_{n_{2}}\cos(4x/3)$-type spatial distribution exist.

(5) when $(\mu_{1},\mu_{2}) \in R_{5}$, the four superposed steady states possessing the saddle type and $r_{3}\phi_{n_{1}}\cos(x)+r_{4}\phi_{n_{2}}\cos(4x/3)$-type spatial distribution appear, and the two stable spatially inhomogeneous steady states with $r_{1}\phi_{n_{1}}\cos(x)$-type spatial distribution still remain. By choosing $(\mu_{1},\mu_{2})=(0.02,-0.14) \in R_{5}$, i.e., $(d_{u},\alpha)\doteq (0.0453,0.4127)$, from Fig. \ref{fig:8}, which corresponds to the initial functions $u_{0}(x)=0.2806+0.1\cos(x)+0.1\cos(4x/3)$ and $v_{0}(x)=0.1813+0.1\cos(x)+0.1\cos(4x/3)$, and from Fig. \ref{fig:9}, which corresponds to the initial functions $u_{0}(x)=0.2806-0.1\cos(x)+0.1\cos(4x/3)$ and $v_{0}(x)=0.1813-0.1\cos(x)+0.1\cos(4x/3)$, we can see that two stable spatially inhomogeneous steady states possessing $r_{1}\phi_{n_{1}}\cos(x)$-type spatial distribution exist.

(6) when $(\mu_{1},\mu_{2}) \in R_{6}$, the four superposed steady states possessing the saddle type and $r_{3}\phi_{n_{1}}\cos(x)+r_{4}\phi_{n_{2}}\cos(4x/3)$-type spatial distribution disappear because of the emergence of $CL_{4}$, and the two spatially inhomogeneous steady states with $r_{1}\phi_{n_{1}}\cos(x)$-type spatial distribution are stable.

\begin{remark}
Notice that the patterns corresponding to the Turing-Turing bifurcation are mainly embodied in the region $R_{4}$ and region $R_{5}$, thus in this paper, we mainly investigate the spatiotemporal dynamics of the model (1.1) for $(\mu_{1},\mu_{2}) \in R_{4}$ and $(\mu_{1},\mu_{2}) \in R_{5}$ with different initial functions.
\end{remark}

\begin{remark}
In order to better understand the function composition of the transient profiles, we draw the combined function images of $\cos(x)$ and $\cos(4x/3)$ in Fig. \ref{fig:4}(e), Fig. \ref{fig:5}(e), Fig. \ref{fig:6}(e), Fig. \ref{fig:7}(e), Fig. \ref{fig:8}(d), and Fig. \ref{fig:9}(d). From these figures, we can see that the curves of the transient profiles and the combined function are similar, which once again verify that the transient pattern is the superposition of two spatial patterns in the case of $3:4$ spatial response, which corresponds to the $(3,4)$-mode Turing-Turing bifurcation.
\end{remark}

\section{Conclusion and discussion}
\label{sec:6}

This paper has investigated a predator-prey reaction-diffusion model incorporating predator-taxis (representing prey avoidance behavior) and prey refuge, features of significant biological relevance. Selecting the prey's random diffusion coefficient and the predator-taxis coefficient as bifurcation parameters, we established the precise conditions under which the model undergoes a codimension-two Turing-Turing bifurcation. Utilizing center manifold theorem and normal form theory, we developed a method for calculating the normal form of this Turing-Turing bifurcation for the model and derived explicit formulas for its coefficients. This normal form enabled a detailed classification and description of the spatiotemporal dynamical behaviors exhibited by the model in the vicinity of the bifurcation point. The derived formulas are readily implementable computationally (e.g., via MATLAB). Numerical simulations, performed with appropriate parameter choices, successfully verified our theoretical findings and illustrated the emergence of spatially inhomogeneous steady-state solutions exhibiting diverse patterns.

Notably, the predator-taxis mechanism in model (1.1) is repulsive, reflecting the biologically meaningful scenario where prey individuals sense and actively avoid regions of high predator density to reduce predation risk. The inclusion of this nonlinear taxis term introduces strong coupling into the diffusion structure of model (1.1), consequently increasing the complexity of analyzing its linearized system and computing the cubic truncation of the normal form for the codimension-two Turing-Turing bifurcation.

In Section 2, we rigorously proved the local existence and uniqueness of classical solutions for model (1.1). However, the questions of global existence and uniform boundedness of these classical solutions remain open and warrant further investigation. Furthermore, our analysis did not account for time delays. Temporal delays are inherently present in intra-species interactions (within predator or prey populations) and inter-species interactions (between predators and prey). Therefore, extending this research to incorporate time delays alongside predator-taxis and prey refuge represents a highly relevant and challenging direction for future work. Investigating Turing-Turing bifurcations or other higher-codimension bifurcations in such delayed predator-prey taxis-diffusion models would be particularly valuable.

\section*{Acknowledgments}

\par\noindent The author is grateful to the anonymous referees for their useful suggestions which improve the contents of this paper. This paper is supported by the Natural Science Foundation of Shandong Province, China (No. ZR2024QA119), and the Excellent Doctor Project Fund from the Newly Introduced Talents Scientific Research Foundation at University of Jinan, China (No. XBS2448).

\section*{Author Contributions}

\par\noindent Yehu Lv: Methodology, formal analysis and investigation, writing and original draft preparation, writing, review and editing.

\section*{Declarations}

\subsection*{Ethical Approval}

\par\noindent This research did not involve human participants or animals.

\subsection*{Availability of Data and Materials}

\par\noindent The numerical simulations in this paper are carried out using MATLAB software.

\subsection*{Competing Interests}

\par\noindent The author declares that there is no conflict of interest, whether financial or non-financial.

\section*{Appendix A}
\setcounter{equation}{0}
\renewcommand\theequation{A.\arabic{equation}}

Here, we give the the detailed derivation processes of the normal form of Turing-Turing bifurcation for model (1.1).
\par~~~
\par\noindent {\bf{Calculation of $g_{2}^{1}(z,0,\mu)$}}
\par~~~

By noticing that $\varphi=\Phi z_{x}$ when $w=0$, then denote
\begin{equation}\begin{aligned}
&F_{20}^{d}(\varphi)=F_{20}^{d}(\Phi z_{x}):=\left(\begin{array}{c}
\alpha^{*}\left(\varphi^{(1)}_{x}\varphi^{(2)}_{x}+\varphi^{(1)}\varphi^{(2)}_{xx}\right) \\
0
\end{array}\right), \\
&F_{21}^{d}(\varphi,\mu)=F_{21}^{d}(\Phi z_{x},\mu):=\mu_{1}\left(\begin{array}{c}
\varphi_{xx}^{(1)} \\
0
\end{array}\right)+\mu_{2}\left(\begin{array}{c}
u_{*}\varphi_{xx}^{(2)} \\
0
\end{array}\right),
\end{aligned}\end{equation}
and from (4.22), we have
\begin{equation}\begin{aligned}
&f_{2}^{1}(z,0,\mu)=\Psi\left(\begin{aligned}
&\left[\widetilde{F}_{2}(\Phi z_{x},\mu),\beta_{\nu}^{(1)}\right] \\
&\left[\widetilde{F}_{2}(\Phi z_{x},\mu),\beta_{\nu}^{(2)}\right]
\end{aligned}\right)_{\nu=n_{1}}^{\nu=n_{2}},
\end{aligned}\end{equation}
where
\begin{equation}
\widetilde{F}_{2}(\Phi z_{x},\mu)=F_{2}^{d}(\Phi z_{x},\mu)+F_{2}(\Phi z_{x})=F_{20}^{d}(\Phi z_{x})+F_{21}^{d}(\Phi z_{x},\mu)+F_{2}(\Phi z_{x}).
\end{equation}
From (4.3), we can write $F_{2}(\Phi z_{x}+w)$ as
\begin{equation}\begin{aligned}
F_{2}(\Phi z_{x}+w)&=\sum_{q_{1}+q_{2}=2}A_{q_{1}q_{2}}z_{1}^{q_{1}}z_{2}^{q_{2}}\gamma_{n_{1}}^{q_{1}}(x)\gamma_{n_{2}}^{q_{2}}(x)+S_{2}(\Phi z_{x},w)+O\left(|w|^{2}\right) \\
&=A_{20}z_{1}^{2}\gamma_{n_{1}}^{2}(x)+A_{02}z_{2}^{2}\gamma_{n_{2}}^{2}(x)+A_{11}z_{1}z_{2}\gamma_{n_{1}}(x)\gamma_{n_{2}}(x) \\
&+S_{2}(\Phi z_{x},w)+O\left(|w|^{2}\right),
\end{aligned}\end{equation}
where $S_{2}(\Phi z_{x},w)$ is the product term of $\Phi z_{x}$ and $w$, and
\begin{equation*}
A_{q_{1}q_{2}}=\left(A_{q_{1}q_{2}}^{(1)},A_{q_{1}q_{2}}^{(2)}\right)^{T},~q_{1}, q_{2} \in \mathbb{N}_{0}.
\end{equation*}
Therefore, from (A.4), we have
\begin{equation}
F_{2}(\Phi z_{x})=A_{20}z_{1}^{2}\gamma_{n_{1}}^{2}(x)+A_{02}z_{2}^{2}\gamma_{n_{2}}^{2}(x)+A_{11}z_{1}z_{2}\gamma_{n_{1}}(x)\gamma_{n_{2}}(x).
\end{equation}
By combining with (4.1), (4.13) and (4.14), when $w=0$, we have
\begin{equation*}\begin{aligned}
\varphi&=\Phi z_{x}=\phi_{n_{1}}z_{1}\gamma_{n_{1}}(x)+\phi_{n_{2}}z_{2}\gamma_{n_{2}}(x) \\
&=\left(\begin{array}{c}
\phi_{n_{1}}^{(1)}z_{1}\frac{\sqrt{2}}{\sqrt{\ell\pi}}\cos\left(\frac{n_{1}x}{\ell}\right)+\phi_{n_{2}}^{(1)}z_{2}\frac{\sqrt{2}}{\sqrt{\ell\pi}}\cos\left(\frac{n_{2}x}{\ell}\right) \\
\phi_{n_{1}}^{(2)}z_{1}\frac{\sqrt{2}}{\sqrt{\ell\pi}}\cos\left(\frac{n_{1}x}{\ell}\right)+\phi_{n_{2}}^{(2)}z_{2}\frac{\sqrt{2}}{\sqrt{\ell\pi}}\cos\left(\frac{n_{2}x}{\ell}\right)
\end{array}\right),
\end{aligned}\end{equation*}
then we have
\begin{equation}\begin{aligned}
&\varphi^{(1)}_{x}=-\frac{n_{1}}{\ell}\phi_{n_{1}}^{(1)}z_{1}\frac{\sqrt{2}}{\sqrt{\ell\pi}}\sin\left(\frac{n_{1}x}{\ell}\right)-\frac{n_{2}}{\ell}\phi_{n_{2}}^{(1)}z_{2}\frac{\sqrt{2}}{\sqrt{\ell\pi}}\sin\left(\frac{n_{2}x}{\ell}\right), \\
&\varphi^{(2)}_{x}=-\frac{n_{1}}{\ell}\phi_{n_{1}}^{(2)}z_{1}\frac{\sqrt{2}}{\sqrt{\ell\pi}}\sin\left(\frac{n_{1}x}{\ell}\right)-\frac{n_{2}}{\ell}\phi_{n_{2}}^{(2)}z_{2}\frac{\sqrt{2}}{\sqrt{\ell\pi}}\sin\left(\frac{n_{2}x}{\ell}\right), \\
&\varphi^{(1)}_{xx}=-\frac{n_{1}^{2}}{\ell^{2}}\phi_{n_{1}}^{(1)}z_{1}\frac{\sqrt{2}}{\sqrt{\ell\pi}}\cos\left(\frac{n_{1}x}{\ell}\right)-\frac{n_{2}^{2}}{\ell^{2}}\phi_{n_{2}}^{(1)}z_{2}\frac{\sqrt{2}}{\sqrt{\ell\pi}}\cos\left(\frac{n_{2}x}{\ell}\right), \\
&\varphi^{(1)}=\phi_{n_{1}}^{(1)}z_{1}\frac{\sqrt{2}}{\sqrt{\ell\pi}}\cos\left(\frac{n_{1}x}{\ell}\right)+\phi_{n_{2}}^{(1)}z_{2}\frac{\sqrt{2}}{\sqrt{\ell\pi}}\cos\left(\frac{n_{2}x}{\ell}\right), \\
&\varphi^{(2)}_{xx}=-\frac{n_{1}^{2}}{\ell^{2}}\phi_{n_{1}}^{(2)}z_{1}\frac{\sqrt{2}}{\sqrt{\ell\pi}}\cos\left(\frac{n_{1}x}{\ell}\right)-\frac{n_{2}^{2}}{\ell^{2}}\phi_{n_{2}}^{(2)}z_{2}\frac{\sqrt{2}}{\sqrt{\ell\pi}}\cos\left(\frac{n_{2}x}{\ell}\right).
\end{aligned}\end{equation}
Therefore, from (A.6), we have
\begin{equation}\begin{aligned}
\varphi^{(1)}_{x}\varphi^{(2)}_{x}&=\frac{n_{1}^{2}}{\ell^{2}}\phi_{n_{1}}^{(1)}\phi_{n_{1}}^{(2)}z_{1}^{2}\frac{2}{\ell\pi}\sin^{2}\left(\frac{n_{1}x}{\ell}\right) \\
&+\frac{n_{2}^{2}}{\ell^{2}}\phi_{n_{2}}^{(1)}\phi_{n_{2}}^{(2)}z_{2}^{2}\frac{2}{\ell\pi}\sin^{2}\left(\frac{n_{2}x}{\ell}\right)\\
&+\frac{n_{1}n_{2}}{\ell^{2}}\left(\phi_{n_{1}}^{(1)}\phi_{n_{2}}^{(2)}+\phi_{n_{2}}^{(1)}\phi_{n_{1}}^{(2)}\right)z_{1}z_{2}\frac{2}{\ell\pi}\sin\left(\frac{n_{1}x}{\ell}\right)\sin\left(\frac{n_{2}x}{\ell}\right), \\
\varphi^{(1)}\varphi^{(2)}_{xx}&=-\frac{n_{1}^{2}}{\ell^{2}}\phi_{n_{1}}^{(1)}\phi_{n_{1}}^{(2)}z_{1}^{2}\frac{2}{\ell\pi}\cos^{2}\left(\frac{n_{1}x}{\ell}\right) \\
&-\frac{n_{2}^{2}}{\ell^{2}}\phi_{n_{1}}^{(1)}\phi_{n_{2}}^{(2)}z_{1}z_{2}\frac{2}{\ell\pi}\cos\left(\frac{n_{1}x}{\ell}\right)\cos\left(\frac{n_{2}x}{\ell}\right) \\
&-\frac{n_{1}^{2}}{\ell^{2}}\phi_{n_{2}}^{(1)}\phi_{n_{1}}^{(2)}z_{1}z_{2}\frac{2}{\ell\pi}\cos\left(\frac{n_{1}x}{\ell}\right)\cos\left(\frac{n_{2}x}{\ell}\right) \\
&-\frac{n_{2}^{2}}{\ell^{2}}\phi_{n_{2}}^{(1)}\phi_{n_{2}}^{(2)}z_{2}^{2}\frac{2}{\ell\pi}\cos^{2}\left(\frac{n_{2}x}{\ell}\right).
\end{aligned}\end{equation}
By combining with (A.1), (A.3), (A.5), (A.6) and (A.7), $\widetilde{F}_{2}(\Phi z_{x},\mu)$ can be rewritten as
\begin{equation}\begin{aligned}
\widetilde{F}_{2}(\Phi z_{x},\mu)&=F_{2}^{d}(\Phi z_{x},\mu)+F_{2}(\Phi z_{x}) \\
&=F_{20}^{d}(\Phi z_{x})+F_{21}^{d}(\Phi z_{x},\mu)+F_{2}(\Phi z_{x}) \\
&=\alpha^{*}\left(\begin{array}{c}
\varphi^{(1)}_{x}\varphi^{(2)}_{x} \\
0
\end{array}\right)+\alpha^{*}\left(\begin{array}{c}
\varphi^{(1)}\varphi^{(2)}_{xx} \\
0
\end{array}\right)+\mu_{1}\left(\begin{array}{c}
\varphi_{xx}^{(1)} \\
0
\end{array}\right)+\mu_{2}u_{*}\left(\begin{array}{c}
\varphi_{xx}^{(2)} \\
0
\end{array}\right) \\
&+A_{20}z_{1}^{2}\gamma_{n_{1}}^{2}(x)+A_{02}z_{2}^{2}\gamma_{n_{2}}^{2}(x)+A_{11}z_{1}z_{2}\gamma_{n_{1}}(x)\gamma_{n_{2}}(x) \\
&=\alpha^{*}\frac{n_{1}^{2}}{\ell^{2}}A_{20}^{(d,1)}z_{1}^{2}\widetilde{\gamma}_{n_{1}}^{2}(x)+\alpha^{*}\frac{n_{1}n_{2}}{\ell^{2}}(A_{11}^{(d,1)}+A_{11}^{(d,2)})z_{1}z_{2}\widetilde{\gamma}_{n_{1}}(x)\widetilde{\gamma}_{n_{2}}(x) \\
&+\alpha^{*}\frac{n_{2}^{2}}{\ell^{2}}A_{02}^{(d,1)}z_{2}^{2}\widetilde{\gamma}_{n_{2}}^{2}(x)-\alpha^{*}\frac{n_{1}^{2}}{\ell^{2}}A_{20}^{(d,1)}z_{1}^{2}\gamma_{n_{1}}^{2}(x)-\alpha^{*}\frac{n_{2}^{2}}{\ell^{2}}A_{11}^{(d,1)}z_{1}z_{2}\gamma_{n_{1}}(x)\gamma_{n_{2}}(x) \\
&-\alpha^{*}\frac{n_{1}^{2}}{\ell^{2}}A_{11}^{(d,2)}z_{1}z_{2}\gamma_{n_{1}}(x)\gamma_{n_{2}}(x)-\alpha^{*}\frac{n_{2}^{2}}{\ell^{2}}A_{02}^{(d,1)}z_{2}^{2}\gamma_{n_{2}}^{2}(x)-\mu_{1}\frac{n_{1}^{2}}{\ell^{2}}A_{1010}^{(d,1)}z_{1}\gamma_{n_{1}}(x) \\
&-\mu_{1}\frac{n_{2}^{2}}{\ell^{2}}A_{0110}^{(d,1)}z_{2}\gamma_{n_{2}}(x)-\mu_{2}u_{*}\frac{n_{1}^{2}}{\ell^{2}}A_{1001}^{(d,1)}z_{1}\gamma_{n_{1}}(x)-\mu_{2}u_{*}\frac{n_{2}^{2}}{\ell^{2}}A_{0101}^{(d,1)}z_{2}\gamma_{n_{2}}(x) \\
&+A_{20}z_{1}^{2}\gamma_{n_{1}}^{2}(x)+A_{02}z_{2}^{2}\gamma_{n_{2}}^{2}(x)+A_{11}z_{1}z_{2}\gamma_{n_{1}}(x)\gamma_{n_{2}}(x),
\end{aligned}\end{equation}
where
\begin{equation}
\widetilde{\gamma}_{n}(x)=\frac{\sqrt{2}}{\sqrt{\ell\pi}}\sin\left(\frac{nx}{\ell}\right),~n=n_{1}, n_{2}.
\end{equation}
By combining with (4.30), (A.2), (A.8) and (A.9), and by noticing that
\begin{equation*}\begin{aligned}
\alpha_{1}&:=\int_{0}^{\ell\pi}\widetilde{\gamma}_{n_{1}}^{2}(x)\gamma_{n_{1}}(x)\mathrm{d}x=0, \\
\alpha_{2}&:=\int_{0}^{\ell\pi}\widetilde{\gamma}_{n_{1}}(x)\widetilde{\gamma}_{n_{2}}(x)\gamma_{n_{1}}(x)\mathrm{d}x=\begin{cases}
\frac{1}{\sqrt{2\ell\pi}}, & n_{2}=2n_{1}, \\
0, & n_{2}\neq 2n_{1},
\end{cases} \\
\alpha_{3}&:=\int_{0}^{\ell\pi}\widetilde{\gamma}_{n_{2}}^{2}(x)\gamma_{n_{1}}(x)\mathrm{d}x=\begin{cases}
-\frac{1}{\sqrt{2\ell\pi}}, & n_{1}=2n_{2}, \\
0, & n_{1}\neq 2n_{2},
\end{cases} \\
\alpha_{4}&:=\int_{0}^{\ell\pi}\gamma_{n_{1}}^{2}(x)\gamma_{n_{1}}(x)\mathrm{d}x=0, \\
\alpha_{5}&:=\int_{0}^{\ell\pi}\gamma_{n_{1}}(x)\gamma_{n_{2}}(x)\gamma_{n_{1}}(x)\mathrm{d}x=\begin{cases}
\frac{1}{\sqrt{2\ell\pi}}, & n_{2}=2n_{1}, \\
0, & n_{2}\neq 2n_{1},
\end{cases} \\
\alpha_{6}&:=\int_{0}^{\ell\pi}\gamma_{n_{2}}^{2}(x)\gamma_{n_{1}}(x)\mathrm{d}x=\begin{cases}
\frac{1}{\sqrt{2\ell\pi}}, & n_{1}=2n_{2}, \\
0, & n_{1}\neq 2n_{2},
\end{cases} \\
\alpha_{7}&:=\int_{0}^{\ell\pi}\gamma_{n_{1}}(x)\gamma_{n_{1}}(x)\mathrm{d}x=1, \\
\alpha_{8}&:=\int_{0}^{\ell\pi}\gamma_{n_{2}}(x)\gamma_{n_{1}}(x)\mathrm{d}x=0
\end{aligned}\end{equation*}
and
\begin{equation*}\begin{aligned}
&\beta_{1}:=\int_{0}^{\ell\pi}\widetilde{\gamma}_{n_{1}}^{2}(x)\gamma_{n_{2}}(x)\mathrm{d}x=\begin{cases}
-\frac{1}{\sqrt{2\ell\pi}}, & n_{2}=2n_{1}, \\
0, & n_{2}\neq 2n_{1},
\end{cases} \\
&\beta_{2}:=\int_{0}^{\ell\pi}\widetilde{\gamma}_{n_{1}}(x)\widetilde{\gamma}_{n_{2}}(x)\gamma_{n_{2}}(x)\mathrm{d}x=\begin{cases}
\frac{1}{\sqrt{2\ell\pi}}, & n_{1}=2n_{2}, \\
0, & n_{1}\neq 2n_{2},
\end{cases} \\
&\beta_{3}:=\int_{0}^{\ell\pi}\widetilde{\gamma}_{n_{2}}^{2}(x)\gamma_{n_{2}}(x)\mathrm{d}x=0, \\
&\beta_{4}:=\int_{0}^{\ell\pi}\gamma_{n_{1}}^{2}(x)\gamma_{n_{2}}(x)\mathrm{d}x=\begin{cases}
\frac{1}{\sqrt{2\ell\pi}}, & n_{2}=2n_{1}, \\
0, & n_{2}\neq 2n_{1},
\end{cases} \\
&\beta_{5}:=\int_{0}^{\ell\pi}\gamma_{n_{1}}(x)\gamma_{n_{2}}(x)\gamma_{n_{2}}(x)\mathrm{d}x=\begin{cases}
\frac{1}{\sqrt{2\ell\pi}}, & n_{1}=2n_{2}, \\
0, & n_{1}\neq 2n_{2},
\end{cases} \\
&\beta_{6}:=\int_{0}^{\ell\pi}\gamma_{n_{2}}^{2}(x)\gamma_{n_{2}}(x)\mathrm{d}x=0,~\beta_{7}:=\int_{0}^{\ell\pi}\gamma_{n_{1}}(x)\gamma_{n_{2}}(x)\mathrm{d}x=0, \\
&\beta_{8}:=\int_{0}^{\ell\pi}\gamma_{n_{2}}(x)\gamma_{n_{2}}(x)\mathrm{d}x=1,
\end{aligned}\end{equation*}
we have
\begin{equation}\begin{aligned}
&\frac{1}{2!}g_{2}^{1}(z,0,\mu)=\frac{1}{2!}\operatorname{Proj}_{\operatorname{Ker}(M_{2}^{1})}f_{2}^{1}(z,0,\mu) \\
&=\left(\begin{array}{c}
B_{2000}^{(1)}z_{1}^{2}+B_{1100}^{(1)}z_{1}z_{2}+(B_{1010}^{(1)}\mu_{1}+B_{1001}^{(1)}\mu_{2})z_{1}+B_{0200}^{(1)}z_{2}^{2}+(B_{0110}^{(1)}\mu_{1}+B_{0101}^{(1)}\mu_{2})z_{2} \\
B_{2000}^{(2)}z_{1}^{2}+B_{1100}^{(2)}z_{1}z_{2}+(B_{1010}^{(2)}\mu_{1}+B_{1001}^{(2)}\mu_{2})z_{1}+B_{0200}^{(2)}z_{2}^{2}+(B_{0110}^{(2)}\mu_{1}+B_{0101}^{(2)}\mu_{2})z_{2}
\end{array}\right),
\end{aligned}\end{equation}
where
\begin{equation*}\left\{\begin{aligned}
&B_{2000}^{(1)}=\frac{1}{2}\psi_{n_{1}}^{T}\left(\alpha^{*}\frac{n_{1}^{2}}{\ell^{2}}A_{20}^{(d,1)}\alpha_{1}-\alpha^{*}\frac{n_{1}^{2}}{\ell^{2}}A_{20}^{(d,1)}\alpha_{4}+A_{20}\alpha_{4}\right), \\
&B_{1100}^{(1)}=\frac{1}{2}\psi_{n_{1}}^{T}\left(\alpha^{*}\frac{n_{1}n_{2}}{\ell^{2}}A_{11}^{(d,1)}\alpha_{2}+\alpha^{*}\frac{n_{1}n_{2}}{\ell^{2}}A_{11}^{(d,2)}\alpha_{2}-\alpha^{*}\frac{n_{2}^{2}}{\ell^{2}}A_{11}^{(d,1)}\alpha_{5}-\alpha^{*}\frac{n_{1}^{2}}{\ell^{2}}A_{11}^{(d,2)}\alpha_{5}+A_{11}\alpha_{5}\right), \\
&B_{1010}^{(1)}=\frac{1}{2}\psi_{n_{1}}^{T}\left(-\frac{n_{1}^{2}}{\ell^{2}}A_{1010}^{(d,1)}\alpha_{7}\right),~B_{1001}^{(1)}=\frac{1}{2}\psi_{n_{1}}^{T}\left(-u_{*}\frac{n_{1}^{2}}{\ell^{2}}A_{1001}^{(d,1)}\alpha_{7}\right), \\
&B_{0200}^{(1)}=\frac{1}{2}\psi_{n_{1}}^{T}\left(\alpha^{*}\frac{n_{2}^{2}}{\ell^{2}}A_{02}^{(d,1)}\alpha_{3}-\alpha^{*}\frac{n_{2}^{2}}{\ell^{2}}A_{02}^{(d,1)}\alpha_{6}+A_{02}\alpha_{6}\right), \\
&B_{0110}^{(1)}=\frac{1}{2}\psi_{n_{1}}^{T}\left(-\frac{n_{2}^{2}}{\ell^{2}}A_{0110}^{(d,1)}\alpha_{8}\right),~B_{0101}^{(1)}=\frac{1}{2}\psi_{n_{1}}^{T}\left(-u_{*}\frac{n_{2}^{2}}{\ell^{2}}A_{0101}^{(d,1)}\alpha_{8}\right), \\
&B_{2000}^{(2)}=\frac{1}{2}\psi_{n_{2}}^{T}\left(\alpha^{*}\frac{n_{1}^{2}}{\ell^{2}}A_{20}^{(d,1)}\beta_{1}-\alpha^{*}\frac{n_{1}^{2}}{\ell^{2}}A_{20}^{(d,1)}\beta_{4}+A_{20}\beta_{4}\right), \\
&B_{1100}^{(2)}=\frac{1}{2}\psi_{n_{2}}^{T}\left(\alpha^{*}\frac{n_{1}n_{2}}{\ell^{2}}A_{11}^{(d,1)}\beta_{2}+\alpha^{*}\frac{n_{1}n_{2}}{\ell^{2}}A_{11}^{(d,2)}\beta_{2}-\alpha^{*}\frac{n_{2}^{2}}{\ell^{2}}A_{11}^{(d,1)}\beta_{5}-\alpha^{*}\frac{n_{1}^{2}}{\ell^{2}}A_{11}^{(d,2)}\beta_{5}+A_{11}\beta_{5}\right), \\
&B_{1010}^{(2)}=\frac{1}{2}\psi_{n_{2}}^{T}\left(-\frac{n_{1}^{2}}{\ell^{2}}A_{1010}^{(d,1)}\beta_{7}\right),~B_{1001}^{(2)}=\frac{1}{2}\psi_{n_{2}}^{T}\left(-u_{*}\frac{n_{1}^{2}}{\ell^{2}}A_{1001}^{(d,1)}\beta_{7}\right), \\
&B_{0200}^{(2)}=\frac{1}{2}\psi_{n_{2}}^{T}\left(\alpha^{*}\frac{n_{2}^{2}}{\ell^{2}}A_{02}^{(d,1)}\beta_{3}-\alpha^{*}\frac{n_{2}^{2}}{\ell^{2}}A_{02}^{(d,1)}\beta_{6}+A_{02}\beta_{6}\right), \\
&B_{0110}^{(2)}=\frac{1}{2}\psi_{n_{2}}^{T}\left(-\frac{n_{2}^{2}}{\ell^{2}}A_{0110}^{(d,1)}\beta_{8}\right),~B_{0101}^{(2)}=\frac{1}{2}\psi_{n_{2}}^{T}\left(-u_{*}\frac{n_{2}^{2}}{\ell^{2}}A_{0101}^{(d,1)}\beta_{8}\right).
\end{aligned}\right.\end{equation*}
\par~~~
\par\noindent {\bf{Calculation of $g_{3}^{1}(z,0,\mu)$}}
\par~~~

Denote
\begin{equation}\begin{aligned}
&f_{2}^{(1,1)}(z,w):=\Psi\left(\begin{aligned}
&\left[F_{2}(\Phi z_{x}+w),\beta_{\nu}^{(1)}\right] \\
&\left[F_{2}(\Phi z_{x}+w),\beta_{\nu}^{(2)}\right]
\end{aligned}\right)_{\nu=n_{1}}^{\nu=n_{2}}, \\
&f_{2}^{(1,2)}(z,w,\mu):=\Psi\left(\begin{aligned}
&\left[F_{2}^{d}(\Phi z_{x}+w,\mu),\beta_{\nu}^{(1)}\right] \\
&\left[F_{2}^{d}(\Phi z_{x}+w,\mu),\beta_{\nu}^{(2)}\right]
\end{aligned}\right)_{\nu=n_{1}}^{\nu=n_{2}}.
\end{aligned}\end{equation}
In this subsection, we calculate the third-order term $g_{3}^{1}(z,0,\mu)$ in terms of (4.29). Notice that $\widetilde{f}_{3}^{1}(z,0,\mu)$ is the term of order 3 obtained after the changes of variables in previous step, which is determined by
\begin{equation}\begin{aligned}
\widetilde{f}_{3}^{1}(z,0,\mu)&=f_{3}^{1}(z,0,\mu)+\frac{3}{2}\left((D_{z}f_{2}^{1}(z,0,\mu))U_{2}^{1}(z,\mu)+(D_{w}f_{2}^{(1,1)}(z,0))U_{2}^{2}(z,\mu)\right. \\
&\left.+(D_{w,w_{x},w_{xx}}f_{2}^{(1,2)}(z,0,\mu))U_{2}^{(2,d)}(z,\mu)-(D_{z}U_{2}^{1}(z,\mu))g_{2}^{1}(z,0,\mu)\right),
\end{aligned}\end{equation}
where
\begin{equation}\begin{aligned}
&f_{2}^{1}(z,0,\mu)=f_{2}^{(1,1)}(z,0)+f_{2}^{(1,2)}(z,0,\mu), \\
&D_{w,w_{x},w_{xx}}f_{2}^{(1,2)}(z,0,\mu)=\left(D_{w}f_{2}^{(1,2)}(z,0,\mu),D_{w_{x}}f_{2}^{(1,2)}(z,0,\mu),D_{w_{x x}}f_{2}^{(1,2)}(z,0,\mu)\right),
\end{aligned}\end{equation}
and
\begin{equation}\begin{aligned}
&U_{2}^{1}(z,\mu)=(M_{2}^{1})^{-1}\operatorname{Proj}_{\operatorname{Im}(M_{2}^{1})}f_{2}^{1}(z,0,\mu),~U_{2}^{2}(z,\mu)=(M_{2}^{2})^{-1}f_{2}^{2}(z,0,\mu), \\
&U_{2}^{(2,d)}(z,\mu)=\operatorname{col}\left(U_{2}^{2}(z,\mu),U_{2,x}^{2}(z,\mu),U_{2,xx}^{2}(z,\mu)\right).
\end{aligned}\end{equation}
Notice that in (A.14),
\begin{equation}\left\{\begin{aligned}
&U_{2}^{2}(z,\mu)\triangleq h(z,\mu)=\sum_{n \in \mathbb{N}_{0}}h_{n}(z,\mu)\gamma_{n}(x), \\
&U_{2,x}^{2}(z,\mu)=h_{x}(z,\mu)=-\sum_{n \in \mathbb{N}_{0}}(n/\ell)h_{n}(z,\mu)\widetilde{\gamma}_{n}(x), \\
&U_{2,xx}^{2}(z,\mu)=h_{xx}(z,\mu)=-\sum_{n \in \mathbb{N}_{0}}(n/\ell)^{2}h_{n}(z,\mu)\gamma_{n}(x),
\end{aligned}\right.\end{equation}
where $\widetilde{\gamma}_{n}(x)$ is given by (A.9). From (A.12), we have
\begin{equation}\begin{aligned}
\widetilde{f}_{3}^{1}(z,0,0)&=f_{3}^{1}(z,0,0)+\frac{3}{2}\left((D_{z}f_{2}^{1}(z,0,0))U_{2}^{1}(z,0)+(D_{w}f_{2}^{(1,1)}(z,0))U_{2}^{2}(z,0)\right. \\
&\left.+(D_{w,w_{x},w_{xx}}f_{2}^{(1,2)}(z,0,0))U_{2}^{(2,d)}(z,0)-(D_{z}U_{2}^{1}(z,0))g_{2}^{1}(z,0,0)\right),
\end{aligned}\end{equation}
which implies that it is sufficient for obtaining $g_{3}^{1}(z,0,0)$ to compute
\begin{equation}\begin{aligned}
&\operatorname{Proj}_{S_{1}}f_{3}^{1}(z,0,0),~\operatorname{Proj}_{S_{1}}((D_{z}f_{2}^{1}(z,0,0))U_{2}^{1}(z,0)),~\operatorname{Proj}_{S_{1}}((D_{w}f_{2}^{(1,1)}(z,0))U_{2}^{2}(z,0)), \\
&\operatorname{Proj}_{S_{1}}((D_{w,w_{x},w_{xx}}f_{2}^{(1,2)}(z,0,0))U_{2}^{(2,d)}(z,0)),~\operatorname{Proj}_{S_{1}}((D_{z}U_{2}^{1}(z,0))g_{2}^{1}(z,0,0)).
\end{aligned}\end{equation}

In what follows, by combining with (4.31), (A.16) and (A.17), we calculate $\operatorname{Proj}_{S_{1}}\widetilde{f}_{3}^{1}(z,0,0)$ step by step.

\noindent {\bf{Step 1}}\quad The calculation of $\operatorname{Proj}_{S_{1}}f_{3}^{1}(z,0,0)$

From (4.22), we have
\begin{equation}
f_{3}^{1}(z,0,\mu)=\Psi\left(\begin{aligned}
&\left[F_{3}^{d}(\Phi z_{x},\mu)+F_{3}(\Phi z_{x}),\beta_{\nu}^{(1)}\right] \\
&\left[F_{3}^{d}(\Phi z_{x},\mu)+F_{3}(\Phi z_{x}),\beta_{\nu}^{(2)}\right]
\end{aligned}\right)_{\nu=n_{1}}^{\nu=n_{2}},
\end{equation}
and similar to (A.5), we can write $F_{3}(\Phi z_{x})$ as
\begin{equation}
F_{3}(\Phi z_{x})=\sum_{q_{1}+q_{2}=3}A_{q_{1}q_{2}}z_{1}^{q_{1}}z_{2}^{q_{2}}\gamma_{n_{1}}^{q_{1}}(x)\gamma_{n_{2}}^{q_{2}}(x).
\end{equation}
Then by combining with (A.18) and (A.19), we have
\begin{equation}\begin{aligned}
f_{3}^{1}(z,0,0)&=\Psi\left(\begin{aligned}
&\left[F_{3}(\Phi z_{x}),\beta_{\nu}^{(1)}\right] \\
&\left[F_{3}(\Phi z_{x}),\beta_{\nu}^{(2)}\right]
\end{aligned}\right)_{\nu=n_{1}}^{\nu=n_{2}} \\
&=\Psi\left(\begin{aligned}
&\sum_{q_{1}+q_{2}=3}A_{q_{1}q_{2}}z_{1}^{q_{1}}z_{2}^{q_{2}}\int_{0}^{\ell\pi}\gamma_{n_{1}}^{q_{1}+1}(x)\gamma_{n_{2}}^{q_{2}}(x)\mathrm{d}x  \\
&\sum_{q_{1}+q_{2}=3}A_{q_{1}q_{2}}z_{1}^{q_{1}}z_{2}^{q_{2}}\int_{0}^{\ell\pi}\gamma_{n_{1}}^{q_{1}}(x)\gamma_{n_{2}}^{q_{2}+1}(x)\mathrm{d}x
\end{aligned}\right).
\end{aligned}\end{equation}
Thus by combining with (4.31), (A.20) and
\begin{equation*}\begin{aligned}
&\alpha_{9}:=\int_{0}^{\ell\pi}\gamma_{n_{1}}^{4}(x)\mathrm{d}x=\frac{3}{2\ell\pi},~\alpha_{10}:=\int_{0}^{\ell\pi}\gamma_{n_{1}}^{3}(x)\gamma_{n_{2}}(x)\mathrm{d}x=\begin{cases}
\frac{1}{2\ell\pi}, & \text { for } n_{2}=3n_{1}, \\
0, & \text { for } n_{2} \neq 3n_{1},
\end{cases} \\
&\alpha_{11}:=\int_{0}^{\ell\pi}\gamma_{n_{1}}^{2}(x)\gamma_{n_{2}}^{2}(x)\mathrm{d}x=\frac{1}{\ell\pi},~\alpha_{12}:=\int_{0}^{\ell\pi}\gamma_{n_{1}}(x)\gamma_{n_{2}}^{3}(x)\mathrm{d}x=\begin{cases}
\frac{1}{2\ell\pi}, & \text { for } n_{1}=3n_{2}, \\
0, & \text { for } n_{1} \neq 3n_{2},
\end{cases} \\
&\alpha_{13}:=\int_{0}^{\ell\pi}\gamma_{n_{2}}^{4}(x)\mathrm{d}x=\frac{3}{2\ell\pi},
\end{aligned}\end{equation*}
we have
\begin{equation}
\frac{1}{3!}\operatorname{Proj}_{S_{1}}f_{3}^{1}(z,0,0)=\left(\begin{aligned}
&C_{3000}^{(1)}z_{1}^{3}+C_{2100}^{(1)}z_{1}^{2}z_{2}+C_{1200}^{(1)}z_{1}z_{2}^{2}+C_{0300}^{(1)}z_{2}^{3} \\
&C_{3000}^{(2)}z_{1}^{3}+C_{2100}^{(2)}z_{1}^{2}z_{2}+C_{1200}^{(2)}z_{1}z_{2}^{2}+C_{0300}^{(2)}z_{2}^{3}
\end{aligned}\right),
\end{equation}
where
\begin{equation*}\left\{\begin{aligned}
&C_{3000}^{(1)}=\frac{1}{6}\psi_{n_{1}}^{T}A_{30}\alpha_{9},~C_{2100}^{(1)}=\frac{1}{6}\psi_{n_{1}}^{T}A_{21}\alpha_{10},~C_{1200}^{(1)}=\frac{1}{6}\psi_{n_{1}}^{T}A_{12}\alpha_{11}, \\
&C_{0300}^{(1)}=\frac{1}{6}\psi_{n_{1}}^{T}A_{03}\alpha_{12},~C_{3000}^{(2)}=\frac{1}{6}\psi_{n_{2}}^{T}A_{30}\alpha_{10},~C_{2100}^{(2)}=\frac{1}{6}\psi_{n_{2}}^{T}A_{21}\alpha_{11}, \\
&C_{1200}^{(2)}=\frac{1}{6}\psi_{n_{2}}^{T}A_{12}\alpha_{12},~C_{0300}^{(2)}=\frac{1}{6}\psi_{n_{2}}^{T}A_{03}\alpha_{13}.
\end{aligned}\right.\end{equation*}

\noindent {\bf{Step 2}}\quad The calculation of $\operatorname{Proj}_{S_{1}}\left((D_{z}f_{2}^{1}(z,0,0))U_{2}^{1}(z,0)\right)$

From (4.22), we have
\begin{equation*}\begin{aligned}
f_{2}^{1}(z,0,0)&=\Psi\left(\begin{aligned}
&\left[F_{2}^{d}(\Phi z_{x},0)+F_{2}(\Phi z_{x}),\beta_{\nu}^{(1)}\right] \\
&\left[F_{2}^{d}(\Phi z_{x},0)+F_{2}(\Phi z_{x}),\beta_{\nu}^{(2)}\right]
\end{aligned}\right)_{\nu=n_{1}}^{\nu=n_{2}},
\end{aligned}\end{equation*}
where
\begin{equation*}\begin{aligned}
F_{2}^{d}(\Phi z_{x},0)+F_{2}(\Phi z_{x})&=F_{20}^{d}(\Phi z_{x})+F_{2}(\Phi z_{x}) \\
&=\alpha^{*}\frac{n_{1}^{2}}{\ell^{2}}A_{20}^{(d,1)}z_{1}^{2}\widetilde{\gamma}_{n_{1}}^{2}(x)+\alpha^{*}\frac{n_{1}n_{2}}{\ell^{2}}A_{11}^{(d,1)}z_{1}z_{2}\widetilde{\gamma}_{n_{1}}(x)\widetilde{\gamma}_{n_{2}}(x) \\
&+\alpha^{*}\frac{n_{1}n_{2}}{\ell^{2}}A_{11}^{(d,2)}z_{1}z_{2}\widetilde{\gamma}_{n_{1}}(x)\widetilde{\gamma}_{n_{2}}(x)+\alpha^{*}\frac{n_{2}^{2}}{\ell^{2}}A_{02}^{(d,1)} z_{2}^{2}\widetilde{\gamma}_{n_{2}}^{2}(x) \\
&-\alpha^{*}\frac{n_{1}^{2}}{\ell^{2}}A_{20}^{(d,1)}z_{1}^{2}\gamma_{n_{1}}^{2}(x)-\alpha^{*}\frac{n_{2}^{2}}{\ell^{2}}A_{11}^{(d,1)}z_{1}z_{2}\gamma_{n_{1}}(x)\gamma_{n_{2}}(x) \\
&-\alpha^{*}\frac{n_{1}^{2}}{\ell^{2}}A_{11}^{(d,2)}z_{1}z_{2}\gamma_{n_{1}}(x)\gamma_{n_{2}}(x)-\alpha^{*}\frac{n_{2}^{2}}{\ell^{2}}A_{02}^{(d,1)}z_{2}^{2}\gamma_{n_{2}}^{2}(x) \\
&+A_{20}z_{1}^{2}\gamma_{n_{1}}^{2}(x)+A_{02}z_{2}^{2}\gamma_{n_{2}}^{2}(x)+A_{11}z_{1}z_{2}\gamma_{n_{1}}(x)\gamma_{n_{2}}(x).
\end{aligned}\end{equation*}
Furthermore, notice that
\begin{equation*}\begin{aligned}
&\Psi\left(\begin{aligned}
&\left[F_{2}(\Phi z_{x}),\beta_{\nu}^{(1)}\right] \\
&\left[F_{2}(\Phi z_{x}),\beta_{\nu}^{(2)}\right]
\end{aligned}\right)_{\nu=n_{1}}^{\nu=n_{2}}=\Psi\left(\begin{array}{c}
\sum_{q_{1}+q_{2}=2}A_{q_{1}q_{2}}z_{1}^{q_{1}}z_{2}^{q_{2}}\int_{0}^{\ell\pi}\gamma_{n_{1}}^{q_{1}+1}(x)\gamma_{n_{2}}^{q_{2}}(x)dx  \\
\sum_{q_{1}+q_{2}=2}A_{q_{1}q_{2}}z_{1}^{q_{1}}z_{2}^{q_{2}}\int_{0}^{\ell\pi}\gamma_{n_{1}}^{q_{1}}(x)\gamma_{n_{2}}^{q_{2}+1}(x)dx
\end{array}\right) \\
&=\Psi\left(\begin{array}{c}
A_{11}z_{1}z_{2}\alpha_{5}+A_{02}z_{2}^{2}\alpha_{6} \\
A_{20}z_{1}^{2}\alpha_{5}+A_{11}z_{1}z_{2}\alpha_{6}
\end{array}\right), \\
&\Psi\left(\begin{aligned}
&\left[F_{20}^{d}(\Phi z_{x},0),\beta_{\nu}^{(1)}\right] \\
&\left[F_{20}^{d}(\Phi z_{x},0),\beta_{\nu}^{(2)}\right]
\end{aligned}\right)_{\nu=n_{1}}^{\nu=n_{2}} \\
&=\Psi\left(\begin{array}{c}
\left(-\alpha^{*}\frac{n_{1}^{2}}{\ell^{2}}A_{20}^{(d,1)}\alpha_{4}+\alpha^{*}\frac{n_{1}^{2}}{\ell^{2}}A_{20}^{(d,1)}\alpha_{1}\right)z_{1}^{2}+\left(-\alpha^{*}\frac{n_{2}^{2}}{\ell^{2}}A_{11}^{(d,1)}\alpha_{5}+\alpha^{*}\frac{n_{1}n_{2}}{\ell^{2}}A_{11}^{(d,1)}\alpha_{2}\right)z_{1}z_{2} \\
+\left(-\alpha^{*}\frac{n_{1}^{2}}{\ell^{2}}A_{11}^{(d,2)}\alpha_{5}+\alpha^{*}\frac{n_{1}n_{2}}{\ell^{2}}A_{11}^{(d,2)}\alpha_{2}\right)z_{1}z_{2}+\left(-\alpha^{*}\frac{n_{2}^{2}}{\ell^{2}}A_{02}^{(d,1)}\alpha_{6}+\alpha^{*}\frac{n_{2}^{2}}{\ell^{2}}A_{02}^{(d,1)}\alpha_{3}\right)z_{2}^{2} \\
\left(-\alpha^{*}\frac{n_{1}^{2}}{\ell^{2}}A_{20}^{(d,1)}\beta_{4}+\alpha^{*}\frac{n_{1}^{2}}{\ell^{2}}A_{20}^{(d,1)}\beta_{1}\right)z_{1}^{2}+\left(-\alpha^{*}\frac{n_{2}^{2}}{\ell^{2}}A_{11}^{(d,1)}\beta_{5}+\alpha^{*}\frac{n_{1}n_{2}}{\ell^{2}}A_{11}^{(d,1)}\beta_{2}\right)z_{1}z_{2} \\
+\left(-\alpha^{*}\frac{n_{1}^{2}}{\ell^{2}}A_{11}^{(d,2)}\beta_{5}+\alpha^{*}\frac{n_{1}n_{2}}{\ell^{2}}A_{11}^{(d,2)}\beta_{2}\right)z_{1}z_{2}+\left(-\alpha^{*}\frac{n_{2}^{2}}{\ell^{2}}A_{02}^{(d,1)}\beta_{6}+\alpha^{*}\frac{n_{2}^{2}}{\ell^{2}}A_{02}^{(d,1)}\beta_{3}\right)z_{2}^{2}
\end{array}\right),
\end{aligned}\end{equation*}
thus we have
\begin{equation}\begin{aligned}
&f_{2}^{1}(z,0,0) \\
&=\Psi\left(\begin{array}{c}
\left(-\alpha^{*}\frac{n_{1}^{2}}{\ell^{2}}A_{20}^{(d,1)}\alpha_{4}+\alpha^{*}\frac{n_{1}^{2}}{\ell^{2}}A_{20}^{(d,1)}\alpha_{1}\right)z_{1}^{2}+\left(-\alpha^{*}\frac{n_{2}^{2}}{\ell^{2}}A_{11}^{(d,1)}\alpha_{5}+\alpha^{*}\frac{n_{1}n_{2}}{\ell^{2}}A_{11}^{(d,1)}\alpha_{2}\right)z_{1}z_{2} \\
+\left(-\alpha^{*}\frac{n_{1}^{2}}{\ell^{2}}A_{11}^{(d,2)}\alpha_{5}+\alpha^{*}\frac{n_{1}n_{2}}{\ell^{2}}A_{11}^{(d,2)}\alpha_{2}\right)z_{1}z_{2}+\left(-\alpha^{*}\frac{n_{2}^{2}}{\ell^{2}}A_{02}^{(d,1)}\alpha_{6}+\alpha^{*}\frac{n_{2}^{2}}{\ell^{2}}A_{02}^{(d,1)}\alpha_{3}\right)z_{2}^{2} \\
+A_{11}z_{1}z_{2}\alpha_{5}+A_{02}z_{2}^{2}\alpha_{6} \\
\left(-\alpha^{*}\frac{n_{1}^{2}}{\ell^{2}}A_{20}^{(d,1)}\beta_{4}+\alpha^{*}\frac{n_{1}^{2}}{\ell^{2}}A_{20}^{(d,1)}\beta_{1}\right)z_{1}^{2}+\left(-\alpha^{*}\frac{n_{2}^{2}}{\ell^{2}}A_{11}^{(d,1)}\beta_{5}+\alpha^{*}\frac{n_{1}n_{2}}{\ell^{2}}A_{11}^{(d,1)}\beta_{2}\right)z_{1}z_{2} \\
+\left(-\alpha^{*}\frac{n_{1}^{2}}{\ell^{2}}A_{11}^{(d,2)}\beta_{5}+\alpha^{*}\frac{n_{1}n_{2}}{\ell^{2}}A_{11}^{(d,2)}\beta_{2}\right)z_{1}z_{2}+\left(-\alpha^{*}\frac{n_{2}^{2}}{\ell^{2}}A_{02}^{(d,1)}\beta_{6}+\alpha^{*}\frac{n_{2}^{2}}{\ell^{2}}A_{02}^{(d,1)}\beta_{3}\right)z_{2}^{2} \\
+A_{20}z_{1}^{2}\alpha_{5}+A_{11}z_{1}z_{2}\alpha_{6}
\end{array}\right).
\end{aligned}\end{equation}
Moreover, from (4.25), (A.22) and $M_{j}^{1}(z^{q}\mu^{l}e_{\xi})=0$ with $\xi=1,2,~j=2,3,~q=q_{1}+q_{2}=2,3,~l=l_{1}+l_{2}=2$, and by a direct calculation, we have
\begin{equation}
U_{2}^{1}(z,0)=\left(M_{2}^{1}\right)^{-1}\operatorname{Proj}_{\operatorname{Im}\left(M_{2}^{1}\right)}f_{2}^{1}(z,0,0)=(0,0)^{T}.
\end{equation}
Therefore, by combining with (4.31), (A.22) and (A.23), we have
\begin{equation}\begin{aligned}
&\frac{1}{3!}\operatorname{Proj}_{S_{1}}\left((D_{z}f_{2}^{1}(z,0,0))U_{2}^{1}(z,0)\right) \\
&=\left(\begin{array}{c}
D_{3000}^{(1)}z_{1}^{3}+D_{2100}^{(1)}z_{1}^{2}z_{2}+D_{1200}^{(1)}z_{1}z_{2}^{2}+D_{0300}^{(1)}z_{2}^{3} \\
D_{3000}^{(2)}z_{1}^{3}+D_{2100}^{(2)}z_{1}^{2}z_{2}+D_{1200}^{(2)}z_{1}z_{2}^{2}+D_{0300}^{(2)}z_{2}^{3}
\end{array}\right),
\end{aligned}\end{equation}
where
\begin{equation*}\left\{\begin{aligned}
&D_{3000}^{(1)}=0,~D_{2100}^{(1)}=0,~D_{1200}^{(1)}=0,~D_{0300}^{(1)}=0, \\
&D_{3000}^{(2)}=0,~D_{2100}^{(2)}=0,~D_{1200}^{(2)}=0,~D_{0300}^{(2)}=0.
\end{aligned}\right.\end{equation*}

\noindent {\bf{Step 3}}\quad The calculation of $\operatorname{Proj}_{S_{1}}((D_{w}f_{2}^{(1,1)}(z,0))U_{2}^{2}(z,0))$

From (A.15), we can define
\begin{equation}\begin{aligned}
&U_{2}^{2}(z,0)\triangleq h(z)=\sum_{n \in \mathbb{N}_{0}}h_{n}(z)\gamma_{n}(x), \\
&U_{2,x}^{2}(z,0)\triangleq h_{x}(z)=-\sum_{n \in \mathbb{N}_{0}}(n/\ell)h_{n}(z)\widetilde{\gamma}_{n}(x), \\
&U_{2,xx}^{2}(z,0)\triangleq h_{xx}(z)=-\sum_{n \in \mathbb{N}_{0}}(n/\ell)^{2}h_{n}(z)\gamma_{n}(x),
\end{aligned}\end{equation}
where $h_{n}(z)=\sum_{q_{1}+q_{2}=2}h_{n,q_{1}q_{2}}z_{1}^{q_{1}}z_{2}^{q_{2}}$ with $h_{n,q_{1}q_{2}}=(h_{n,q_{1}q_{2}}^{(1)},h_{n,q_{1}q_{2}}^{(2)})^{T}$. Furthermore, from (A.11), we have
\begin{equation}
f_{2}^{(1,1)}(z,w)=\Psi\left(\begin{aligned}
&\left[F_{2}(\Phi z_{x}+w),\beta_{\nu}^{(1)}\right] \\
&\left[F_{2}(\Phi z_{x}+w),\beta_{\nu}^{(2)}\right]
\end{aligned}\right)_{\nu=n_{1}}^{\nu=n_{2}}.
\end{equation}
Then by combining with (A.25) and (A.26), we have
\begin{equation}\begin{aligned}
&(D_{w}f_{2}^{(1,1)}(z,0))U_{2}^{2}(z,0) \\
&=\Psi\left(\begin{aligned}
&\left[\left.D_{w}F_{2}(\Phi z_{x}+w)\right|_{w=0}\left(\sum_{n \in \mathbb{N}_{0}}h_{n}(z)\gamma_{n}(x)\right),\beta_{\nu}^{(1)}\right] \\
&\left[\left.D_{w}F_{2}(\Phi z_{x}+w)\right|_{w=0}\left(\sum_{n \in \mathbb{N}_{0}}h_{n}(z)\gamma_{n}(x)\right),\beta_{\nu}^{(2)}\right]
\end{aligned}\right)_{\nu=n_{1}}^{\nu=n_{2}}.
\end{aligned}\end{equation}
Furthermore, from (A.4), we have
\begin{equation}
\left.D_{w}F_{2}(\Phi z_{x}+w)\right|_{w=0}\left(\sum_{n \in \mathbb{N}_{0}}h_{n}(z)\gamma_{n}(x)\right)=S_{2}\left(\Phi z_{x},\sum_{n \in \mathbb{N}_{0}}h_{n}(z)\gamma_{n}(x)\right).
\end{equation}
Moreover, notice that $\Phi z_{x}=\phi_{n_{1}}z_{1}\gamma_{n_{1}}(x)+\phi_{n_{2}}z_{2}\gamma_{n_{2}}(x)$, then we have
\begin{equation}\begin{aligned}
&\left(\begin{aligned}
\left[S_{2}\left(\Phi z_{x},\sum_{n \in \mathbb{N}_{0}}h_{n}(z)\gamma_{n}(x)\right),\beta_{\nu}^{(1)}\right] \\
\left[S_{2}\left(\Phi z_{x},\sum_{n \in \mathbb{N}_{0}}h_{n}(z)\gamma_{n}(x)\right),\beta_{\nu}^{(2)}\right]
\end{aligned}\right)_{\nu=n_{1}}^{\nu=n_{2}} \\
&=\sum_{n \in \mathbb{N}_{0}}b_{n_{1},n,\nu}S_{2}(\phi_{n_{1}}z_{1},h_{n}(z))+\sum_{n \in \mathbb{N}_{0}}b_{n_{2},n,\nu}S_{2}(\phi_{n_{2}}z_{2},h_{n}(z)),
\end{aligned}\end{equation}
where $n=0,1,2, \cdots$, and $\nu=n_{1}, n_{2}$,
\begin{equation}\begin{aligned}
&b_{n_{1},n,\nu}=\int_{0}^{\ell\pi}\gamma_{n_{1}}(x)\gamma_{n}(x)\gamma_{\nu}(x)\mathrm{d}x=\begin{cases}
\frac{1}{\sqrt{\ell\pi}}, & \operatorname { for } n=0, \nu=n_{1}, \\
\frac{1}{\sqrt{2\ell\pi}}, & \operatorname { for }\begin{cases}
n=2n_{1}, & \nu=n_{1}, \\
n=n_{1}+n_{2}\left(\operatorname { or } n=|n_{2}-n_{1}|\right), & \nu=n_{2},
\end{cases} \\
0, & \operatorname{otherwise},
\end{cases} \\
&b_{n_{2},n,\nu}=\int_{0}^{\ell\pi}\gamma_{n_{2}}(x)\gamma_{n}(x)\gamma_{\nu}(x)\mathrm{d}x=\begin{cases}
\frac{1}{\sqrt{\ell\pi}}, & \operatorname { for } n=0, \nu=n_{2}, \\
\frac{1}{\sqrt{2\ell\pi}}, & \operatorname { for }\begin{cases}
n=n_{1}+n_{2}\left(\operatorname { or } n=|n_{2}-n_{1}|\right), & \nu=n_{1}, \\
n=2n_{2}, & \nu=n_{2},
\end{cases} \\
0, & \operatorname{otherwise.}\end{cases}
\end{aligned}\end{equation}
Then by combining with (A.27), (A.28), (A.29) and (A.30), we have
\begin{equation}\begin{aligned}
&(D_{w}f_{2}^{(1,1)}(z,0))U_{2}^{2}(z,0) \\
&=\Psi\left(\begin{array}{c}
b_{n_{1},0,n_{1}}S_{2}(\phi_{n_{1}}z_{1},h_{0}(z))+b_{n_{1},2n_{1},n_{1}}S_{2}(\phi_{n_{1}}z_{1},h_{2n_{1}}(z)) \\
+b_{n_{2},n_{1}+n_{2},n_{1}}S_{2}(\phi_{n_{2}}z_{2},h_{n_{1}+n_{2}}(z))+b_{n_{2},|n_{2}-n_{1}|,n_{1}}S_{2}(\phi_{n_{2}}z_{2},h_{|n_{2}-n_{1}|}(z)) \\
b_{n_{1},n_{1}+n_{2},n_{2}}S_{2}(\phi_{n_{1}}z_{1},h_{n_{1}+n_{2}}(z))+b_{n_{1},|n_{2}-n_{1}|,n_{2}}S_{2}(\phi_{n_{1}}z_{1},h_{|n_{2}-n_{1}|}(z)) \\
+b_{n_{2},0,n_{2}}S_{2}(\phi_{n_{2}}z_{2},h_{0}(z))+b_{n_{2},2n_{2},n_{2}}S_{2}(\phi_{n_{2}}z_{2},h_{2n_{2}}(z))
\end{array}\right).\end{aligned}\end{equation}
Therefore, by combining with (4.31) and (A.31), we have
\begin{equation}\begin{aligned}
&\frac{1}{3!}\operatorname{Proj}_{S_{1}}((D_{w}f_{2}^{(1,1)}(z,0))U_{2}^{2}(z,0)) \\
&=\left(\begin{array}{c}
E_{3000}^{(1)}z_{1}^{3}+E_{2100}^{(1)}z_{1}^{2}z_{2}+E_{1200}^{(1)}z_{1}z_{2}^{2}+E_{0300}^{(1)}z_{2}^{3} \\
E_{3000}^{(2)}z_{1}^{3}+E_{2100}^{(2)}z_{1}^{2}z_{2}+E_{1200}^{(2)}z_{1}z_{2}^{2}+E_{0300}^{(2)}z_{2}^{3}
\end{array}\right),
\end{aligned}\end{equation}
where
\begin{equation*}\left\{\begin{aligned}
E_{3000}^{(1)}&=\frac{1}{6\sqrt{\ell\pi}}\psi_{n_{1}}^{T}S_{2}(\phi_{n_{1}},h_{0,20})+\frac{1}{6\sqrt{2\ell\pi}}\psi_{n_{1}}^{T}S_{2}(\phi_{n_{1}},h_{2n_{1},20}), \\
E_{2100}^{(1)}&=\frac{1}{6\sqrt{\ell\pi}}\psi_{n_{1}}^{T}S_{2}(\phi_{n_{1}},h_{0,11})+\frac{1}{6\sqrt{2\ell\pi}}\psi_{n_{1}}^{T}S_{2}(\phi_{n_{1}},h_{2n_{1},11}) \\
&+\frac{1}{6\sqrt{2\ell\pi}}\psi_{n_{1}}^{T}S_{2}(\phi_{n_{2}},h_{n_{1}+n_{2},20})+\frac{1}{6\sqrt{2\ell\pi}}\psi_{n_{1}}^{T}S_{2}(\phi_{n_{2}},h_{|n_{2}-n_{1}|,20}), \\
E_{1200}^{(1)}&=\frac{1}{6\sqrt{\ell\pi}}\psi_{n_{1}}^{T}S_{2}(\phi_{n_{1}},h_{0,02})+\frac{1}{6\sqrt{2\ell\pi}}\psi_{n_{1}}^{T}S_{2}(\phi_{n_{1}},h_{2n_{1},02}) \\
&+\frac{1}{6\sqrt{2\ell\pi}}\psi_{n_{1}}^{T}S_{2}(\phi_{n_{2}},h_{n_{1}+n_{2},11})+\frac{1}{6\sqrt{2\ell\pi}}\psi_{n_{1}}^{T}S_{2}(\phi_{n_{2}},h_{|n_{2}-n_{1}|,11}), \\
E_{0300}^{(1)}&=\frac{1}{6\sqrt{2\ell\pi}}\psi_{n_{1}}^{T}S_{2}(\phi_{n_{2}},h_{n_{1}+n_{2},02})+\frac{1}{6\sqrt{2\ell\pi}}\psi_{n_{1}}^{T}S_{2}(\phi_{n_{2}},h_{|n_{2}-n_{1}|,02}), \\
E_{3000}^{(2)}&=\frac{1}{6\sqrt{2\ell\pi}}\psi_{n_{2}}^{T}S_{2}(\phi_{n_{1}},h_{n_{1}+n_{2},20})+\frac{1}{6\sqrt{2\ell\pi}}\psi_{n_{2}}^{T}S_{2}(\phi_{n_{1}},h_{|n_{2}-n_{1}|,20}), \\
E_{2100}^{(2)}&=\frac{1}{6\sqrt{2\ell\pi}}\psi_{n_{2}}^{T}S_{2}(\phi_{n_{1}},h_{n_{1}+n_{2},11})+\frac{1}{6\sqrt{2\ell\pi}}\psi_{n_{2}}^{T}S_{2}(\phi_{n_{1}},h_{|n_{2}-n_{1}|,11}) \\
&+\frac{1}{6\sqrt{\ell\pi}}\psi_{n_{2}}^{T}S_{2}(\phi_{n_{2}},h_{0,20})+\frac{1}{6\sqrt{2\ell\pi}}\psi_{n_{2}}^{T}S_{2}(\phi_{n_{2}},h_{2n_{2},20}), \\
E_{1200}^{(2)}&=\frac{1}{6\sqrt{2\ell\pi}}\psi_{n_{2}}^{T}S_{2}(\phi_{n_{1}},h_{n_{1}+n_{2},02})+\frac{1}{6\sqrt{2\ell\pi}}\psi_{n_{2}}^{T}S_{2}(\phi_{n_{1}},h_{|n_{2}-n_{1}|,02}) \\
&+\frac{1}{6\sqrt{\ell\pi}}\psi_{n_{2}}^{T}S_{2}(\phi_{n_{2}},h_{0,11})+\frac{1}{6\sqrt{2\ell\pi}}\psi_{n_{2}}^{T}S_{2}(\phi_{n_{2}},h_{2n_{2},11}), \\
E_{0300}^{(2)}&=\frac{1}{6\sqrt{\ell\pi}}\psi_{n_{2}}^{T}S_{2}(\phi_{n_{2}},h_{0,02})+\frac{1}{6\sqrt{2\ell\pi}}\psi_{n_{2}}^{T}S_{2}(\phi_{n_{2}},h_{2n_{2},02}).
\end{aligned}\right.\end{equation*}

\noindent {\bf{Step 4}}\quad The calculation of $\operatorname{Proj}_{S_{1}}((D_{w,w_{x},w_{xx}}f_{2}^{(1,2)}(z,0,0))U_{2}^{(2,d)}(z,0))$

From (A.11), we have
\begin{equation}\begin{aligned}
f_{2}^{(1,2)}(z,w,0)&=\Psi\left(\begin{aligned}
&\left[F_{2}^{d}(\Phi z_{x}+w,0),\beta_{\nu}^{(1)}\right] \\
&\left[F_{2}^{d}(\Phi z_{x}+w,0),\beta_{\nu}^{(2)}\right]
\end{aligned}\right)_{\nu=n_{1}}^{\nu=n_{2}} \\
&=\Psi\left(\begin{aligned}
&\left[F_{20}^{d}(\Phi z_{x}+w),\beta_{\nu}^{(1)}\right] \\
&\left[F_{20}^{d}(\Phi z_{x}+w),\beta_{\nu}^{(2)}\right]
\end{aligned}\right)_{\nu=n_{1}}^{\nu=n_{2}}.
\end{aligned}\end{equation}
Furthermore, from
\begin{equation*}\begin{aligned}
F_{20}^{d}(\varphi)&=F_{20}^{d}(\Phi z_{x}+w) \\
&=\alpha^{*}\left(\begin{array}{c}
(\varphi^{(1)}_{x}+w^{(1)}_{x})(\varphi^{(2)}_{x}+w^{(2)}_{x}) \\
0
\end{array}\right) \\
&+\alpha^{*}\left(\begin{array}{c}
(\varphi^{(1)}+w^{(1)})(\varphi^{(2)}_{xx}+w^{(2)}_{xx}) \\
0
\end{array}\right),
\end{aligned}\end{equation*}
we can let
\begin{equation*}\begin{aligned}
&\widetilde{S}_{2}^{(d,1)}(\varphi,w)=\alpha^{*}\left(\begin{array}{c}
\varphi_{xx}^{(2)}w^{(1)} \\
0
\end{array}\right), \\
&\widetilde{S}_{2}^{(d,2)}(\varphi,w_{x})=\alpha^{*}\left(\begin{array}{c}
\varphi_{x}^{(1)}w_{x}^{(2)} \\
0
\end{array}\right)+\alpha^{*}\left(\begin{array}{c}
w_{x}^{(1)}\varphi_{x}^{(2)} \\
0
\end{array}\right), \\
&\widetilde{S}_{2}^{(d,3)}(\varphi,w_{xx})=\alpha^{*}\left(\begin{array}{c}
\varphi^{(1)}w_{xx}^{(2)} \\
0
\end{array}\right),
\end{aligned}\end{equation*}
and thus we have
\begin{equation}\begin{aligned}
&\left(D_{w,w_{x},w_{xx}}F_{2}^{d}(\varphi,w,w_{x},w_{xx})\right)|_{w,w_{x},w_{xx}=0}U_{2}^{(2,d)}(z,0) \\
&=\widetilde{S}_{2}^{(d,1)}(\varphi,h(z))+\widetilde{S}_{2}^{(d,2)}(\varphi,h_{x}(z))+\widetilde{S}_{2}^{(d,3)}(\varphi,h_{xx}(z)).
\end{aligned}\end{equation}
Furthermore, by noticing that $\varphi=\Phi z_{x}+w=\phi_{n_{1}}z_{1}\gamma_{n_{1}}(x)+\phi_{n_{2}}z_{2}\gamma_{n_{2}}(x)+w$, then we have
\begin{equation}\begin{aligned}
&\left(\begin{aligned}
&\left[\widetilde{S}_{2}^{(d,1)}(\varphi,h(z)),\beta_{\nu}^{(1)}\right] \\
&\left[\widetilde{S}_{2}^{(d,1)}(\varphi,h(z)),\beta_{\nu}^{(2)}\right]
\end{aligned}\right)_{\nu=n_{1}}^{\nu=n_{2}} \\
&=(n_{1}/\ell)^{2}\sum_{n \in \mathbb{N}_{0}}b_{n_{1},n,\nu}S_{2}^{(d,1)}(\phi_{n_{1}}z_{1},h_{n}(z))+(n_{2}/\ell)^{2}\sum_{n \in \mathbb{N}_{0}}b_{n_{2},n,\nu}S_{2}^{(d,1)}(\phi_{n_{2}}z_{2},h_{n}(z)), \\
&\left(\begin{aligned}
&\left[\widetilde{S}_{2}^{(d,2)}(\varphi,h_{x}(z)),\beta_{\nu}^{(1)}\right] \\
&\left[\widetilde{S}_{2}^{(d,2)}(\varphi,h_{x}(z)),\beta_{\nu}^{(2)}\right]
\end{aligned}\right)_{\nu=n_{1}}^{\nu=n_{2}} \\
&=-(n_{1}/\ell)\sum_{n \in \mathbb{N}_{0}}(n/\ell)b_{n_{1},n,\nu}^{s}S_{2}^{(d,2)}(\phi_{n_{1}}z_{1},h_{n}(z))-(n_{2}/\ell)\sum_{n \in \mathbb{N}_{0}}(n/\ell)b_{n_{2},n,\nu}^{s}S_{2}^{(d,2)}(\phi_{n_{2}}z_{2},h_{n}(z)), \\
&\left(\begin{aligned}
&\left[\widetilde{S}_{2}^{(d,3)}(\varphi,h_{xx}(z)),\beta_{\nu}^{(1)}\right] \\
&\left[\widetilde{S}_{2}^{(d,3)}(\varphi,h_{xx}(z)),\beta_{\nu}^{(2)}\right]
\end{aligned}\right)_{\nu=n_{1}}^{\nu=n_{2}} \\
&=\sum_{n \in \mathbb{N}_{0}}(n/\ell)^{2}b_{n_{1},n,\nu}S_{2}^{(d,3)}(\phi_{n_{1}}z_{1},h_{n}(z))+\sum_{n \in \mathbb{N}_{0}}(n/\ell)^{2}b_{n_{2},n,\nu}S_{2}^{(d,3)}(\phi_{n_{2}}z_{2},h_{n}(z)),
\end{aligned}\end{equation}
where
\begin{equation}\begin{aligned}
&b_{n_{1},n,\nu}^{s}=\int_{0}^{\ell\pi}\widetilde{\gamma}_{n_{1}}(x)\widetilde{\gamma}_{n}(x)\gamma_{\nu}(x)\mathrm{d}x=\begin{cases}
\frac{1}{\sqrt{\ell\pi}}, & n=n_{1},~\nu=0, \\
-\frac{1}{\sqrt{2\ell\pi}}, & n=n_{1},~\nu=2n_{1}, \\
\frac{1}{\sqrt{2\ell\pi}}, & n=2n_{1},~\nu=n_{1}, \\
0, & \text{otherwise},
\end{cases} \\
&b_{n_{2},n,\nu}^{s}=\int_{0}^{\ell\pi}\widetilde{\gamma}_{n_{2}}(x)\widetilde{\gamma}_{n}(x)\gamma_{\nu}(x)\mathrm{d}x=\begin{cases}
\frac{1}{\sqrt{\ell\pi}}, & n=n_{2},~\nu=0, \\
-\frac{1}{\sqrt{2\ell\pi}}, & n=n_{2},~\nu=2n_{2}, \\
\frac{1}{\sqrt{2\ell\pi}}, & n=2n_{2},~\nu=n_{2}, \\
0, & \text{otherwise},
\end{cases}
\end{aligned}\end{equation}
and for $\phi=\left(\phi^{(1)},\phi^{(2)}\right)^{T}$, $y=\left(y^{(1)},y^{(2)}\right)^{T} \in C\left([-1,0],\mathbb{R}^{2}\right)$, we have
\begin{equation}\begin{aligned}
&S_{2}^{(d,1)}(\phi,y)=-\alpha^{*}\left(\begin{array}{c}
\phi^{(2)}y^{(1)} \\
0
\end{array}\right), \\
&S_{2}^{(d,2)}(\phi,y)=-\alpha^{*}\left(\begin{array}{c}
\phi^{(1)}y^{(2)} \\
0
\end{array}\right)-\alpha^{*}\left(\begin{array}{c}
\phi^{(2)}y^{(1)} \\
0
\end{array}\right), \\
&S_{2}^{(d,3)}(\phi,y)=-\alpha^{*}\left(\begin{array}{c}
\phi^{(1)}y^{(2)} \\
0
\end{array}\right).
\end{aligned}\end{equation}
Therefore, by combining with (4.31), (A.13), (A.33), (A.34), (A.35), (A.36) and (A.37), we have
\begin{equation}\begin{aligned}
&\frac{1}{3!}\operatorname{Proj}_{S_{1}}((D_{w,w_{x},w_{xx}}f_{2}^{(1,2)}(z,0,0))U_{2}^{(2,d)}(z,0)) \\
&=\left(\begin{array}{c}
F_{3000}^{(1)}z_{1}^{3}+F_{2100}^{(1)}z_{1}^{2}z_{2}+F_{1200}^{(1)}z_{1}z_{2}^{2}+F_{0300}^{(1)}z_{2}^{3} \\
F_{3000}^{(2)}z_{1}^{3}+F_{2100}^{(2)}z_{1}^{2}z_{2}+F_{1200}^{(2)}z_{1}z_{2}^{2}+F_{0300}^{(2)}z_{2}^{3}
\end{array}\right),
\end{aligned}\end{equation}
where
\begin{equation*}\left\{\begin{aligned}
F_{3000}^{(1)}&=\frac{n_{1}^{2}}{6\ell^{2}}\psi_{n_{1}}^{T}\left(\frac{1}{\sqrt{\ell\pi}}S_{2}^{(d,1)}(\phi_{n_{1}},h_{0,20})+\frac{1}{\sqrt{2\ell\pi}}S_{2}^{(d,1)}(\phi_{n_{1}},h_{2n_{1},20})\right) \\
&-\frac{n_{1}^{2}}{3\ell^{2}}\frac{1}{\sqrt{2\ell\pi}}\psi_{n_{1}}^{T}S_{2}^{(d,2)}(\phi_{n_{1}},h_{2n_{1},20})+\frac{(2n_{1})^{2}}{6\ell^{2}}\frac{1}{\sqrt{2\ell\pi}}\psi_{n_{1}}^{T}S_{2}^{(d,3)}(\phi_{n_{1}},h_{2n_{1},20}), \\
F_{2100}^{(1)}&=\frac{n_{1}^{2}}{6\ell^{2}}\psi_{n_{1}}^{T}\left(\frac{1}{\sqrt{\ell\pi}}S_{2}^{(d,1)}(\phi_{n_{1}},h_{0,11})+\frac{1}{\sqrt{2\ell\pi}}S_{2}^{(d,1)}(\phi_{n_{1}},h_{2n_{1},11})\right) \\
&+\frac{n_{2}^{2}}{6\ell^{2}}\frac{1}{\sqrt{2\ell\pi}}\psi_{n_{1}}^{T}\left(S_{2}^{(d,1)}(\phi_{n_{2}},h_{n_{1}+n_{2},20})+S_{2}^{(d,1)}(\phi_{n_{2}},h_{|n_{2}-n_{1}|,20})\right) \\
&-\frac{n_{1}^{2}}{3\ell^{2}}\frac{1}{\sqrt{2\ell\pi}}\psi_{n_{1}}^{T}S_{2}^{(d,2)}(\phi_{n_{1}},h_{2n_{1},11})+\frac{(2n_{1})^{2}}{6\ell^{2}}\frac{1}{\sqrt{2\ell\pi}}\psi_{n_{1}}^{T}S_{2}^{(d,3)}(\phi_{n_{1}},h_{2n_{1},11}) \\
&+\frac{(n_{1}+n_{2})^{2}}{6\ell^{2}}\frac{1}{\sqrt{2\ell\pi}}\psi_{n_{1}}^{T}S_{2}^{(d,3)}(\phi_{n_{2}},h_{n_{1}+n_{2},20}) \\
&+\frac{|n_{2}-n_{1}|^{2}}{6\ell^{2}}\frac{1}{\sqrt{2\ell\pi}}\psi_{n_{1}}^{T}S_{2}^{(d,3)}(\phi_{n_{2}},h_{|n_{2}-n_{1}|,20}), \\
F_{1200}^{(1)}&=\frac{n_{1}^{2}}{6\ell^{2}}\psi_{n_{1}}^{T}\left(\frac{1}{\sqrt{\ell\pi}}S_{2}^{(d,1)}(\phi_{n_{1}},h_{0,02})+\frac{1}{\sqrt{2\ell\pi}}S_{2}^{(d,1)}(\phi_{n_{1}},h_{2n_{1},02})\right) \\
&+\frac{n_{2}^{2}}{6\ell^{2}}\frac{1}{\sqrt{2\ell\pi}}\psi_{n_{1}}^{T}\left(S_{2}^{(d,1)}(\phi_{n_{2}},h_{n_{1}+n_{2},11})+S_{2}^{(d,1)}(\phi_{n_{2}},h_{|n_{2}-n_{1}|,11})\right) \\
&-\frac{n_{1}^{2}}{3\ell^{2}}\frac{1}{\sqrt{2\ell\pi}}\psi_{n_{1}}^{T}S_{2}^{(d,2)}(\phi_{n_{1}},h_{2n_{1},02})+\frac{(2n_{1})^{2}}{6\ell^{2}}\frac{1}{\sqrt{2\ell\pi}}\psi_{n_{1}}^{T}S_{2}^{(d,3)}(\phi_{n_{1}},h_{2n_{1},02}) \\
&+\frac{(n_{1}+n_{2})^{2}}{6\ell^{2}}\frac{1}{\sqrt{2\ell\pi}}\psi_{n_{1}}^{T}S_{2}^{(d,3)}(\phi_{n_{2}},h_{n_{1}+n_{2},11}) \\
&+\frac{|n_{2}-n_{1}|^{2}}{6\ell^{2}}\frac{1}{\sqrt{2\ell\pi}}\psi_{n_{1}}^{T}S_{2}^{(d,3)}(\phi_{n_{2}},h_{|n_{2}-n_{1}|,11}), \\
F_{0300}^{(1)}&=\frac{n_{2}^{2}}{6\ell^{2}}\frac{1}{\sqrt{2\ell\pi}}\psi_{n_{1}}^{T}\left(S_{2}^{(d,1)}(\phi_{n_{2}},h_{n_{1}+n_{2},02})+S_{2}^{(d,1)}(\phi_{n_{2}},h_{|n_{2}-n_{1}|,02})\right) \\
&+\frac{(n_{1}+n_{2})^{2}}{6\ell^{2}}\frac{1}{\sqrt{2\ell\pi}}\psi_{n_{1}}^{T}S_{2}^{(d,3)}(\phi_{n_{2}},h_{n_{1}+n_{2},02}) \\
&+\frac{|n_{2}-n_{1}|^{2}}{6\ell^{2}}\frac{1}{\sqrt{2\ell\pi}}\psi_{n_{1}}^{T}S_{2}^{(d,3)}(\phi_{n_{2}},h_{|n_{2}-n_{1}|,02}), \\
F_{3000}^{(2)}&=\frac{n_{1}^{2}}{6\ell^{2}}\frac{1}{\sqrt{2\ell\pi}}\psi_{n_{2}}^{T}\left(S_{2}^{(d,1)}(\phi_{n_{1}},h_{n_{1}+n_{2},20})+S_{2}^{(d,1)}(\phi_{n_{1}},h_{|n_{2}-n_{1}|,20})\right) \\
&+\frac{(n_{1}+n_{2})^{2}}{6\ell^{2}}\frac{1}{\sqrt{2\ell\pi}}\psi_{n_{2}}^{T}S_{2}^{(d,3)}(\phi_{n_{1}},h_{n_{1}+n_{2},20}) \\
&+\frac{|n_{2}-n_{1}|^{2}}{6\ell^{2}}\frac{1}{\sqrt{2\ell\pi}}\psi_{n_{2}}^{T}S_{2}^{(d,3)}(\phi_{n_{1}},h_{|n_{2}-n_{1}|,20}),
\end{aligned}\right.\end{equation*}

\begin{equation*}\left\{\begin{aligned}
F_{2100}^{(2)}&=\frac{n_{1}^{2}}{6\ell^{2}}\frac{1}{\sqrt{2\ell\pi}}\psi_{n_{2}}^{T}\left(S_{2}^{(d,1)}(\phi_{n_{1}},h_{n_{1}+n_{2},11})+S_{2}^{(d,1)}(\phi_{n_{1}},h_{|n_{2}-n_{1}|,11})\right) \\
&+\frac{n_{2}^{2}}{6\ell^{2}}\psi_{n_{2}}^{T}\left(\frac{1}{\sqrt{\ell\pi}}S_{2}^{(d,1)}(\phi_{n_{2}},h_{0,20})+\frac{1}{\sqrt{2\ell\pi}}S_{2}^{(d,1)}(\phi_{n_{2}},h_{2n_{2},20})\right) \\
&-\frac{n_{2}^{2}}{3\ell^{2}}\frac{1}{\sqrt{2\ell\pi}}\psi_{n_{2}}^{T}S_{2}^{(d,2)}(\phi_{n_{2}},h_{2n_{2},20})+\frac{(n_{1}+n_{2})^{2}}{6\ell^{2}}\frac{1}{\sqrt{2\ell\pi}}\psi_{n_{2}}^{T}S_{2}^{(d,3)}(\phi_{n_{1}},h_{n_{1}+n_{2},11}) \\
&+\frac{|n_{2}-n_{1}|^{2}}{6\ell^{2}}\frac{1}{\sqrt{2\ell\pi}}\psi_{n_{2}}^{T}S_{2}^{(d,3)}(\phi_{n_{1}},h_{|n_{2}-n_{1}|,11})+\frac{(2n_{2})^{2}}{6\ell^{2}}\frac{1}{\sqrt{2\ell\pi}}\psi_{n_{2}}^{T}S_{2}^{(d,3)}(\phi_{n_{2}},h_{2n_{2},20}), \\
F_{1200}^{(2)}&=\frac{n_{1}^{2}}{6\ell^{2}}\frac{1}{\sqrt{2\ell\pi}}\psi_{n_{2}}^{T}\left(S_{2}^{(d,1)}(\phi_{n_{1}},h_{n_{1}+n_{2},02})+S_{2}^{(d,1)}(\phi_{n_{1}},h_{|n_{2}-n_{1}|,02})\right) \\
&+\frac{n_{2}^{2}}{6\ell^{2}}\psi_{n_{2}}^{T}\left(\frac{1}{\sqrt{\ell\pi}}S_{2}^{(d,1)}(\phi_{n_{2}},h_{0,11})+\frac{1}{\sqrt{2\ell\pi}}S_{2}^{(d,1)}(\phi_{n_{2}},h_{2n_{2},11})\right) \\
&-\frac{n_{2}^{2}}{3\ell^{2}}\frac{1}{\sqrt{2\ell\pi}}\psi_{n_{2}}^{T}S_{2}^{(d,2)}(\phi_{n_{2}},h_{2n_{2},11})+\frac{(n_{1}+n_{2})^{2}}{6\ell^{2}}\frac{1}{\sqrt{2\ell\pi}}\psi_{n_{2}}^{T}S_{2}^{(d,3)}(\phi_{n_{1}},h_{n_{1}+n_{2},02}) \\
&+\frac{|n_{2}-n_{1}|^{2}}{6\ell^{2}}\frac{1}{\sqrt{2\ell\pi}}\psi_{n_{2}}^{T}S_{2}^{(d,3)}(\phi_{n_{1}},h_{|n_{2}-n_{1}|,02})+\frac{(2n_{2})^{2}}{6\ell^{2}}\frac{1}{\sqrt{2\ell\pi}}\psi_{n_{2}}^{T}S_{2}^{(d,3)}(\phi_{n_{2}},h_{2n_{2},11}), \\
F_{0300}^{(2)}&=\frac{n_{2}^{2}}{6\ell^{2}}\psi_{n_{2}}^{T}\left(\frac{1}{\sqrt{\ell\pi}}S_{2}^{(d,1)}(\phi_{n_{2}},h_{0,02})+\frac{1}{\sqrt{2\ell\pi}}S_{2}^{(d,1)}(\phi_{n_{2}},h_{2n_{2},02})\right) \\
&-\frac{n_{2}^{2}}{3\ell^{2}}\frac{1}{\sqrt{2\ell\pi}}\psi_{n_{2}}^{T}S_{2}^{(d,2)}(\phi_{n_{2}},h_{2n_{2},02})+\frac{(2n_{2})^{2}}{6\ell^{2}}\frac{1}{\sqrt{2\ell\pi}}\psi_{n_{2}}^{T}S_{2}^{(d,3)}(\phi_{n_{2}},h_{2n_{2},02}).
\end{aligned}\right.\end{equation*}

\noindent {\bf{Step 5}}\quad The calculation of $\operatorname{Proj}_{S_{1}}((D_{z}U_{2}^{1}(z,0))g_{2}^{1}(z,0,0))$

Since
\begin{equation*}
U_{2}^{1}(z,0)=\left(M_{2}^{1}\right)^{-1}\operatorname{Proj}_{\operatorname{Im}\left(M_{2}^{1}\right)}f_{2}^{1}(z,0,0)=(0,0)^{T},
\end{equation*}
then by combining with (4.31), we have
\begin{equation}\begin{aligned}
&\frac{1}{3!}\operatorname{Proj}_{S_{1}}\left((D_{z}U_{2}^{1}(z,0))g_{2}^{1}(z,0,0)\right) \\
&=\left(\begin{array}{c}
G_{3000}^{(1)}z_{1}^{3}+G_{2100}^{(1)}z_{1}^{2}z_{2}+G_{1200}^{(1)}z_{1}z_{2}^{2}+G_{0300}^{(1)}z_{2}^{3} \\
G_{3000}^{(2)}z_{1}^{3}+G_{2100}^{(2)}z_{1}^{2}z_{2}+G_{1200}^{(2)}z_{1}z_{2}^{2}+G_{0300}^{(2)}z_{2}^{3}
\end{array}\right),
\end{aligned}\end{equation}
where
\begin{equation*}\left\{\begin{aligned}
&G_{3000}^{(1)}=0,~G_{2100}^{(1)}=0,~G_{1200}^{(1)}=0,~G_{0300}^{(1)}=0, \\
&G_{3000}^{(2)}=0,~G_{2100}^{(2)}=0,~G_{1200}^{(2)}=0,~G_{0300}^{(2)}=0.
\end{aligned}\right.\end{equation*}

\section*{Appendix B: Calculations of $f_{\iota_{1}\iota_{2}}$, $A_{q_{1}q_{2}}$ and $S_{2}(\Phi z_{x},w)$}
\setcounter{equation}{0}
\renewcommand\theequation{B.\arabic{equation}}

For the model (1.1), let $F(\varphi)=\operatorname{col}(F^{(1)}(\varphi),F^{(2)}(\varphi))$ for any $\varphi=\operatorname{col}(\varphi_{1},\varphi_{2}) \in X$. Notice that here for the convenience of notation, we denote $\varphi=\operatorname{col}(\varphi_{1},\varphi_{2}) \in X$, instead of $\varphi=\operatorname{col}(\varphi^{(1)},\varphi^{(2)}) \in X$, and then we write the $m$-th Fr$\acute{e}$chet derivative $F_{m}(\varphi),~m \geq 2$ as
\begin{equation*}
\frac{1}{m!}F_{m}(\varphi)=\sum_{\iota_{1}+\iota_{2}=m}\frac{1}{\iota_{1}!\iota_{2}!}f_{\iota_{1}\iota_{2}}\varphi_{1}^{\iota_{1}}\varphi_{2}^{\iota_{2}},
\end{equation*}
where $f_{\iota_{1}\iota_{2}}=\operatorname{col}(f_{\iota_{1}\iota_{2}}^{(1)},f_{\iota_{1}\iota_{2}}^{(2)})$ with
\begin{equation}
f_{\iota_{1}\iota_{2}}^{(1)}=\frac{\partial^{\iota_{1}+\iota_{2}}F^{(1)}(0,0)}{\partial\varphi_{1}^{\iota_{1}}\partial\varphi_{2}^{\iota_{2}}},~f_{\iota_{1}\iota_{2}}^{(2)}=\frac{\partial^{\iota_{1}+\iota_{2}}F^{(2)}(0,0)}{\partial\varphi_{1}^{\iota_{1}}\partial\varphi_{2}^{\iota_{2}}},
\end{equation}
and
\begin{equation}
F(\varphi)=\left(\begin{aligned}
&(\varphi^{(1)}+u_{*})\left(r_{0}-a(\varphi^{(1)}+u_{*})\right)-\frac{b_{1}(1-\beta)(\varphi^{(1)}+u_{*})(\varphi^{(2)}+v_{*})}{b_{2}(\varphi^{(2)}+v_{*})+(1-\beta)(\varphi^{(1)}+u_{*})} \\
&-m_{1}(\varphi^{(2)}+v_{*})+\frac{cb_{1}(1-\beta)(\varphi^{(1)}+u_{*})(\varphi^{(2)}+v_{*})}{b_{2}(\varphi^{(2)}+v_{*})+(1-\beta)(\varphi^{(1)}+u_{*})}
\end{aligned}\right)-L\varphi.
\end{equation}
It follows from (B.1) and (B.2), we can see that $f_{\iota_{1}\iota_{2}}^{(1)}=0$ for $\iota_{2}\geq 2$, and $f_{\iota_{1}\iota_{2}}^{(2)}=0$ for $\iota_{2}\geq 3$. Moreover, we can easily verify that $f_{12}^{(1)}=0$ and $f_{12}^{(2)}=0$, then we have
\begin{equation}\begin{aligned}
F_{2}^{(1)}(\varphi)&=f_{20}^{(1)}\varphi_{1}^{2}+2f_{11}^{(1)}\varphi_{1}\varphi_{2}+f_{02}^{(1)}\varphi_{2}^{2}, \\
F_{2}^{(2)}(\varphi)&=f_{20}^{(2)}\varphi_{1}^{2}+2f_{11}^{(2)}\varphi_{1}\varphi_{2}+f_{02}^{(2)}\varphi_{2}^{2}
\end{aligned}\end{equation}
and
\begin{equation}\begin{aligned}
F_{3}^{(1)}(\varphi)&=f_{30}^{(1)}\varphi_{1}^{3}+3f_{21}^{(1)}\varphi_{1}^{2}\varphi_{2}+3f_{12}^{(1)}\varphi_{1}\varphi_{2}^{2}+f_{03}^{(1)}\varphi_{2}^{3}, \\
F_{3}^{(2)}(\varphi)&=f_{30}^{(2)}\varphi_{1}^{3}+3f_{21}^{(2)}\varphi_{1}^{2}\varphi_{2}+3f_{12}^{(2)}\varphi_{1}\varphi_{2}^{2}+f_{03}^{(2)}\varphi_{2}^{3}.
\end{aligned}\end{equation}
Moreover, by combining with (B.1), (B.2), (B.3) and (B.4), we have
\begin{equation*}\begin{aligned}
f_{20}^{(1)}&=-2a+2b_{1}(1-\beta)^{2}v_{*}(b_{2}v_{*}+(1-\beta)u_{*})^{-2}-2b_{1}(1-\beta)^{3}u_{*}v_{*}(b_{2}v_{*}+(1-\beta)u_{*})^{-3}, \\
f_{11}^{(1)}&=-b_{1}(1-\beta)(b_{2}v_{*}+(1-\beta)u_{*})^{-1}+b_{1}b_{2}(1-\beta)v_{*}(b_{2}v_{*}+(1-\beta)u_{*})^{-2} \\
&+b_{1}(1-\beta)^{2}u_{*}(b_{2}v_{*}+(1-\beta)u_{*})^{-2}-2b_{1}b_{2}(1-\beta)^{2}u_{*}v_{*}(b_{2}v_{*}+(1-\beta)u_{*})^{-3}, \\
f_{02}^{(1)}&=2b_{1}b_{2}(1-\beta)u_{*}(b_{2}v_{*}+(1-\beta)u_{*})^{-2}-2b_{1}b_{2}^{2}(1-\beta)u_{*}v_{*}(b_{2}v_{*}+(1-\beta)u_{*})^{-3}, \\
f_{20}^{(2)}&=-2cb_{1}(1-\beta)^{2}v_{*}(b_{2}v_{*}+(1-\beta)u_{*})^{-2}+2cb_{1}(1-\beta)^{3}u_{*}v_{*}(b_{2}v_{*}+(1-\beta)u_{*})^{-3}, \\
f_{11}^{(2)}&=cb_{1}(1-\beta)(b_{2}v_{*}+(1-\beta)u_{*})^{-1}-cb_{1}b_{2}(1-\beta)v_{*}(b_{2}v_{*}+(1-\beta)u_{*})^{-2} \\
&-cb_{1}(1-\beta)^{2}u_{*}(b_{2}v_{*}+(1-\beta)u_{*})^{-2}+2cb_{1}b_{2}(1-\beta)^{2}u_{*}v_{*}(b_{2}v_{*}+(1-\beta)u_{*})^{-3}, \\
f_{02}^{(2)}&=-2cb_{1}b_{2}(1-\beta)u_{*}(b_{2}v_{*}+(1-\beta)u_{*})^{-2}+2cb_{1}b_{2}^{2}(1-\beta)u_{*}v_{*}(b_{2}v_{*}+(1-\beta)u_{*})^{-3}, \\
f_{30}^{(1)}&=-6b_{1}(1-\beta)^{3}v_{*}(b_{2}v_{*}+(1-\beta)u_{*})^{-3}+6b_{1}(1-\beta)^{4}u_{*}v_{*}(b_{2}v_{*}+(1-\beta)u_{*})^{-4}, \\
f_{21}^{(1)}&=2b_{1}(1-\beta)^{2}(b_{2}v_{*}+(1-\beta)u_{*})^{-2}-4b_{1}b_{2}(1-\beta)^{2}v_{*}(b_{2}v_{*}+(1-\beta)u_{*})^{-3} \\
&-2b_{1}(1-\beta)^{3}u_{*}(b_{2}v_{*}+(1-\beta)u_{*})^{-3}+6b_{1}b_{2}(1-\beta)^{3}u_{*}v_{*}(b_{2}v_{*}+(1-\beta)u_{*})^{-4}, \\
f_{12}^{(1)}&=2b_{1}b_{2}(1-\beta)(b_{2}v_{*}+(1-\beta)u_{*})^{-2}-2b_{1}b_{2}^{2}(1-\beta)v_{*}(b_{2}v_{*}+(1-\beta)u_{*})^{-3} \\
&-4b_{1}b_{2}(1-\beta)^{2}u_{*}(b_{2}v_{*}+(1-\beta)u_{*})^{-3}+6b_{1}b_{2}^{2}(1-\beta)^{2}u_{*}v_{*}(b_{2}v_{*}+(1-\beta)u_{*})^{-4}, \\
f_{03}^{(1)}&=-6b_{1}b_{2}^{2}(1-\beta)u_{*}(b_{2}v_{*}+(1-\beta)u_{*})^{-3}+6b_{1}b_{2}^{3}(1-\beta)u_{*}v_{*}(b_{2}v_{*}+(1-\beta)u_{*})^{-4}, \\
f_{30}^{(2)}&=6cb_{1}(1-\beta)^{3}v_{*}(b_{2}v_{*}+(1-\beta)u_{*})^{-3}-6cb_{1}(1-\beta)^{4}u_{*}v_{*}(b_{2}v_{*}+(1-\beta)u_{*})^{-4}, \\
f_{21}^{(2)}&=-2cb_{1}(1-\beta)^{2}(b_{2}v_{*}+(1-\beta)u_{*})^{-2}+4cb_{1}b_{2}(1-\beta)^{2}v_{*}(b_{2}v_{*}+(1-\beta)u_{*})^{-3} \\
&+2cb_{1}(1-\beta)^{3}u_{*}(b_{2}v_{*}+(1-\beta)u_{*})^{-3}-6cb_{1}b_{2}(1-\beta)^{3}u_{*}v_{*}(b_{2}v_{*}+(1-\beta)u_{*})^{-4}, \\
f_{12}^{(2)}&=-2cb_{1}b_{2}(1-\beta)(b_{2}v_{*}+(1-\beta)u_{*})^{-2}+2cb_{1}b_{2}^{2}(1-\beta)v_{*}(b_{2}v_{*}+(1-\beta)u_{*})^{-3} \\
&+4cb_{1}b_{2}(1-\beta)^{2}u_{*}(b_{2}v_{*}+(1-\beta)u_{*})^{-3}-6cb_{1}b_{2}^{2}(1-\beta)^{2}u_{*}v_{*}(b_{2}v_{*}+(1-\beta)u_{*})^{-4}, \\
f_{03}^{(2)}&=6cb_{1}b_{2}^{2}(1-\beta)u_{*}(b_{2}v_{*}+(1-\beta)u_{*})^{-3}-6cb_{1}b_{2}^{3}(1-\beta)u_{*}v_{*}(b_{2}v_{*}+(1-\beta)u_{*})^{-4}.
\end{aligned}\end{equation*}
Notice that $\varphi=\Phi z_{x}+w$, and more precisely, we have
\begin{equation}
\varphi=\Phi z_{x}+w=\left(\begin{aligned}
&\phi_{n_{1}}^{(1)}z_{1}\gamma_{n_{1}}(x)+\phi_{n_{2}}^{(1)}z_{2}\gamma_{n_{2}}(x)+w_{1} \\
&\phi_{n_{1}}^{(2)}z_{1}\gamma_{n_{1}}(x)+\phi_{n_{2}}^{(2)}z_{2}\gamma_{n_{2}}(x)+w_{2}
\end{aligned}\right).
\end{equation}
From (B.5), and by noticing that
\begin{equation}\begin{aligned}
F_{2}(\Phi z_{x})&=\sum_{q_{1}+q_{2}=2}A_{q_{1}q_{2}}z_{1}^{q_{1}}z_{2}^{q_{2}}\gamma_{n_{1}}^{q_{1}}(x)\gamma_{n_{2}}^{q_{2}}(x) \\
&=A_{20}z_{1}^{2}\gamma_{n_{1}}^{2}(x)+A_{02}z_{2}^{2}\gamma_{n_{2}}^{2}(x)+A_{11}z_{1}z_{2}\gamma_{n_{1}}(x)\gamma_{n_{2}}(x)
\end{aligned}\end{equation}
and
\begin{equation}\begin{aligned}
F_{3}(\Phi z_{x})&=\sum_{q_{1}+q_{2}=3}A_{q_{1}q_{2}}z_{1}^{q_{1}}z_{2}^{q_{2}}\gamma_{n_{1}}^{q_{1}}(x)\gamma_{n_{2}}^{q_{2}}(x) \\
&=A_{30}z_{1}^{3}\gamma_{n_{1}}^{3}(x)+A_{21}z_{1}^{2}z_{2}\gamma_{n_{1}}^{2}(x)\gamma_{n_{2}}(x)+A_{12}z_{1}z_{2}^{2}\gamma_{n_{1}}(x)\gamma_{n_{2}}^{2}(x) \\
&+A_{03}z_{2}^{3}\gamma_{n_{2}}^{3}(x),
\end{aligned}\end{equation}
then by comparing the corresponding coefficients of (B.3) and (B.6) as well as (B.4) and (B.7), respectively, we have
\begin{equation*}\begin{aligned}
A_{20}&=\left(\begin{array}{c}
f_{20}^{(1)}(\phi_{n_{1}}^{(1)})^{2}+2f_{11}^{(1)}\phi_{n_{1}}^{(1)}\phi_{n_{1}}^{(2)}+f_{02}^{(1)}(\phi_{n_{1}}^{(2)})^{2} \\
f_{20}^{(2)}(\phi_{n_{1}}^{(1)})^{2}+2f_{11}^{(2)}\phi_{n_{1}}^{(1)}\phi_{n_{1}}^{(2)}+f_{02}^{(2)}(\phi_{n_{1}}^{(2)})^{2}
\end{array}\right), \\
A_{02}&=\left(\begin{array}{c}
f_{20}^{(1)}(\phi_{n_{2}}^{(1)})^{2}+2f_{11}^{(1)}\phi_{n_{2}}^{(1)}\phi_{n_{2}}^{(2)}+f_{02}^{(1)}(\phi_{n_{2}}^{(2)})^{2} \\
f_{20}^{(2)}(\phi_{n_{2}}^{(1)})^{2}+2f_{11}^{(2)}\phi_{n_{2}}^{(1)}\phi_{n_{2}}^{(2)}+f_{02}^{(2)}(\phi_{n_{2}}^{(2)})^{2}
\end{array}\right), \\
A_{11}&=\left(\begin{array}{c}
2f_{20}^{(1)}\phi_{n_{1}}^{(1)}\phi_{n_{2}}^{(1)}+2f_{11}^{(1)}\left(\phi_{n_{1}}^{(1)}\phi_{n_{2}}^{(2)}+\phi_{n_{2}}^{(1)}\phi_{n_{1}}^{(2)}\right)+2f_{02}^{(1)}\phi_{n_{1}}^{(2)}\phi_{n_{2}}^{(2)} \\
2f_{20}^{(2)}\phi_{n_{1}}^{(1)}\phi_{n_{2}}^{(1)}+2f_{11}^{(2)}\left(\phi_{n_{1}}^{(1)}\phi_{n_{2}}^{(2)}+\phi_{n_{2}}^{(1)}\phi_{n_{1}}^{(2)}\right)+2f_{02}^{(2)}\phi_{n_{1}}^{(2)}\phi_{n_{2}}^{(2)}
\end{array}\right)
\end{aligned}\end{equation*}
and
\begin{equation*}\begin{aligned}
A_{30}&=\left(\begin{array}{c}
f_{30}^{(1)}(\phi_{n_{1}}^{(1)})^{3}+3f_{21}^{(1)}(\phi_{n_{1}}^{(1)})^{2}\phi_{n_{1}}^{(2)}+3f_{12}^{(1)}\phi_{n_{1}}^{(1)}(\phi_{n_{1}}^{(2)})^{2}+f_{03}^{(1)}(\phi_{n_{1}}^{(2)})^{3} \\
f_{30}^{(2)}(\phi_{n_{1}}^{(1)})^{3}+3f_{21}^{(2)}(\phi_{n_{1}}^{(1)})^{2}\phi_{n_{1}}^{(2)}+3f_{12}^{(2)}\phi_{n_{1}}^{(1)}(\phi_{n_{1}}^{(2)})^{2}+f_{03}^{(2)}(\phi_{n_{1}}^{(2)})^{3}
\end{array}\right), \\
A_{21}&=\left(\begin{array}{c}
3f_{30}^{(1)}(\phi_{n_{1}}^{(1)})^{2}\phi_{n_{2}}^{(1)}+3f_{21}^{(1)}\left((\phi_{n_{1}}^{(1)})^{2}\phi_{n_{2}}^{(2)}+2\phi_{n_{1}}^{(1)}\phi_{n_{2}}^{(1)}\phi_{n_{1}}^{(2)}\right)+3f_{12}^{(1)}\left(\phi_{n_{2}}^{(1)}(\phi_{n_{1}}^{(2)})^{2}+2\phi_{n_{1}}^{(2)}\phi_{n_{2}}^{(2)}\phi_{n_{1}}^{(1)}\right)  \\
+3f_{03}^{(1)}(\phi_{n_{1}}^{(2)})^{2}\phi_{n_{2}}^{(2)} \\
3f_{30}^{(2)}(\phi_{n_{1}}^{(1)})^{2}\phi_{n_{2}}^{(1)}+3f_{21}^{(2)}\left((\phi_{n_{1}}^{(1)})^{2}\phi_{n_{2}}^{(2)}+2\phi_{n_{1}}^{(1)}\phi_{n_{2}}^{(1)}\phi_{n_{1}}^{(2)}\right)+3f_{12}^{(2)}\left(\phi_{n_{2}}^{(1)}(\phi_{n_{1}}^{(2)})^{2}+2\phi_{n_{1}}^{(2)}\phi_{n_{2}}^{(2)}\phi_{n_{1}}^{(1)}\right)  \\
+3f_{03}^{(2)}(\phi_{n_{1}}^{(2)})^{2}\phi_{n_{2}}^{(2)}
\end{array}\right), \\
A_{12}&=\left(\begin{array}{c}
3f_{30}^{(1)}\phi_{n_{1}}^{(1)}(\phi_{n_{2}}^{(1)})^{2}+3f_{21}^{(1)}\left((\phi_{n_{2}}^{(1)})^{2}\phi_{n_{1}}^{(2)}+2\phi_{n_{1}}^{(1)}\phi_{n_{2}}^{(1)}\phi_{n_{2}}^{(2)}\right)+3f_{12}^{(1)}\left((\phi_{n_{2}}^{(2)})^{2}\phi_{n_{1}}^{(1)}+2\phi_{n_{1}}^{(2)}\phi_{n_{2}}^{(2)}\phi_{n_{2}}^{(1)}\right) \\
+3f_{03}^{(1)}\phi_{n_{1}}^{(2)}(\phi_{n_{2}}^{(2)})^{2} \\
3f_{30}^{(2)}\phi_{n_{1}}^{(1)}(\phi_{n_{2}}^{(1)})^{2}+3f_{21}^{(2)}\left((\phi_{n_{2}}^{(1)})^{2}\phi_{n_{1}}^{(2)}+2\phi_{n_{1}}^{(1)}\phi_{n_{2}}^{(1)}\phi_{n_{2}}^{(2)}\right)+3f_{12}^{(2)}\left((\phi_{n_{2}}^{(2)})^{2}\phi_{n_{1}}^{(1)}+2\phi_{n_{1}}^{(2)}\phi_{n_{2}}^{(2)}\phi_{n_{2}}^{(1)}\right) \\
+3f_{03}^{(2)}\phi_{n_{1}}^{(2)}(\phi_{n_{2}}^{(2)})^{2}
\end{array}\right), \\
A_{03}&=\left(\begin{array}{c}
f_{30}^{(1)}(\phi_{n_{2}}^{(1)})^{3}+3f_{21}^{(1)}(\phi_{n_{2}}^{(1)})^{2}\phi_{n_{2}}^{(2)}+3f_{12}^{(1)}\phi_{n_{2}}^{(1)}(\phi_{n_{2}}^{(2)})^{2}+f_{03}^{(1)}(\phi_{n_{2}}^{(2)})^{3} \\
f_{30}^{(2)}(\phi_{n_{2}}^{(1)})^{3}+3f_{21}^{(2)}(\phi_{n_{2}}^{(1)})^{2}\phi_{n_{2}}^{(2)}+3f_{12}^{(2)}\phi_{n_{2}}^{(1)}(\phi_{n_{2}}^{(2)})^{2}+f_{03}^{(2)}(\phi_{n_{2}}^{(2)})^{3}
\end{array}\right).
\end{aligned}\end{equation*}
Moreover, if denote
\begin{equation*}\begin{aligned}
&\breve{\varphi}_{1}:=\phi_{n_{1}}^{(1)}z_{1}\gamma_{n_{1}}(x)+\phi_{n_{2}}^{(1)}z_{2}\gamma_{n_{2}}(x), \\
&\breve{\varphi}_{2}:=\phi_{n_{1}}^{(2)}z_{1}\gamma_{n_{1}}(x)+\phi_{n_{2}}^{(2)}z_{2}\gamma_{n_{2}}(x),
\end{aligned}\end{equation*}
then by combining with (A.4), (B.3) and (B.5), we have
\begin{equation*}
S_{2}(\Phi z_{x},w)=\left(\begin{array}{c}
2f_{20}^{(1)}\breve{\varphi}_{1}w_{1}+2f_{11}^{(1)}\left(\breve{\varphi}_{1}w_{2}+\breve{\varphi}_{2}w_{1}\right)+2f_{02}^{(1)}\breve{\varphi}_{2}w_{2} \\
2f_{20}^{(2)}\breve{\varphi}_{1}w_{1}+2f_{11}^{(2)}\left(\breve{\varphi}_{1}w_{2}+\breve{\varphi}_{2}w_{1}\right)+2f_{02}^{(2)}\breve{\varphi}_{2}w_{2}
\end{array}\right).
\end{equation*}

\section*{Appendix C: Calculations of $h_{n,q_{1}q_{2}}$}
\setcounter{equation}{0}
\renewcommand\theequation{C.\arabic{equation}}

By combining with (4.16), (4.26) and (A.25), we have
\begin{equation*}
M_{2}^{2}\left(\sum_{n \in \mathbb{N}_{0}}h_{n}(z)\gamma_{n}(x)\right)=D_{z}\left(\sum_{n \in \mathbb{N}_{0}}h_{n}(z)\gamma_{n}(x)\right)Bz-\mathcal{L}_{n}\left(\sum_{n \in \mathbb{N}_{0}}h_{n}(z)\gamma_{n}(x)\right)
\end{equation*}
and
\begin{equation}
\left(\begin{aligned}
&\left[M_{2}^{2}\left(\sum_{n \in \mathbb{N}_{0}}h_{n}(z)\gamma_{n}(x)\right),\beta_{n}^{(1)}\right] \\
&\left[M_{2}^{2}\left(\sum_{n \in \mathbb{N}_{0}}h_{n}(z)\gamma_{n}(x)\right),\beta_{n}^{(2)}\right]
\end{aligned}\right)=-\sum_{n \in \mathbb{N}_{0}}\mathcal{L}_{n}(h_{n}(z)),
\end{equation}
where $\mathcal{L}_{n}(.)$ is given by
\begin{equation*}
\mathcal{L}_{n}(h_{n}(z))=-\frac{n^{2}}{\ell^{2}}D_{0}h_{n}(z)+A_{1}h_{n}(z).
\end{equation*}
It follows from (4.12) and (4.22), we have
\begin{equation}\begin{aligned}
f_{2}^{2}(z,0,0)&=\widetilde{F}_{2}(\Phi z_{x},0)-\pi\left(\widetilde{F}_{2}(\Phi z_{x},0)\right) \\
&=\widetilde{F}_{2}(\Phi z_{x},0) \\
&-\Phi_{n_{1}}\Psi_{n_{1}}\left(\begin{aligned}
&\left[\widetilde{F}_{2}(\Phi z_{x},0),\beta_{n_{1}}^{(1)}\right] \\
&\left[\widetilde{F}_{2}(\Phi z_{x},0),\beta_{n_{1}}^{(2)}\right]
\end{aligned}\right)\gamma_{n_{1}}(x) \\
&-\Phi_{n_{2}}\Psi_{n_{2}}\left(\begin{aligned}
&\left[\widetilde{F}_{2}(\Phi z_{x},0),\beta_{n_{2}}^{(1)}\right] \\
&\left[\widetilde{F}_{2}(\Phi z_{x},0),\beta_{n_{2}}^{(2)}\right]
\end{aligned}\right)\gamma_{n_{2}}(x).
\end{aligned}\end{equation}
Furthermore, by noticing that
\begin{equation*}\begin{aligned}
\widetilde{F}_{2}(\Phi z_{x},0)&=F_{2}^{d}(\Phi z_{x},0)+F_{2}(\Phi z_{x}) \\
&=F_{20}^{d}(\Phi z_{x})+F_{21}^{d}(\Phi z_{x},0)+F_{2}(\Phi z_{x}) \\
&=\alpha^{*}\frac{n_{1}^{2}}{\ell^{2}}A_{20}^{(d,1)}z_{1}^{2}\widetilde{\gamma}_{n_{1}}^{2}(x) \\
&+\alpha^{*}\frac{n_{1}n_{2}}{\ell^{2}}\left(A_{11}^{(d,1)}+A_{11}^{(d,2)}\right)z_{1}z_{2}\widetilde{\gamma}_{n_{1}}(x)\widetilde{\gamma}_{n_{2}}(x) \\
&+\alpha^{*}\frac{n_{2}^{2}}{\ell^{2}}A_{02}^{(d,1)}z_{2}^{2}\widetilde{\gamma}_{n_{2}}^{2}(x) \\
&-\alpha^{*}\frac{n_{1}^{2}}{\ell^{2}}A_{20}^{(d,1)}z_{1}^{2}\gamma_{n_{1}}^{2}(x) \\
&-\alpha^{*}\frac{n_{2}^{2}}{\ell^{2}}A_{11}^{(d,1)}z_{1}z_{2}\gamma_{n_{1}}(x)\gamma_{n_{2}}(x) \\
&-\alpha^{*}\frac{n_{1}^{2}}{\ell^{2}}A_{11}^{(d,2)}z_{1}z_{2}\gamma_{n_{1}}(x)\gamma_{n_{2}}(x) \\
&-\alpha^{*}\frac{n_{2}^{2}}{\ell^{2}}A_{02}^{(d,1)}z_{2}^{2}\gamma_{n_{2}}^{2}(x) \\
&+A_{20}z_{1}^{2}\gamma_{n_{1}}^{2}(x)+A_{02}z_{2}^{2}\gamma_{n_{2}}^{2}(x)+A_{11}z_{1}z_{2}\gamma_{n_{1}}(x)\gamma_{n_{2}}(x),
\end{aligned}\end{equation*}
then we have
\begin{equation}\begin{aligned}
&\left(\begin{aligned}
&\left[\widetilde{F}_{2}(\Phi z_{x},0),\beta_{n_{1}}^{(1)}\right] \\
&\left[\widetilde{F}_{2}(\Phi z_{x},0),\beta_{n_{1}}^{(2)}\right]
\end{aligned}\right) \\
&=\left(-\alpha^{*}\frac{n_{1}^{2}}{\ell^{2}}A_{20}^{(d,1)}\alpha_{4}+\alpha^{*}\frac{n_{1}^{2}}{\ell^{2}}A_{20}^{(d,1)}\alpha_{1}\right)z_{1}^{2}+\left(-\alpha^{*}\frac{n_{2}^{2}}{\ell^{2}}A_{11}^{(d,1)}\alpha_{5}+\alpha^{*}\frac{n_{1}n_{2}}{\ell^{2}}A_{11}^{(d,1)}\alpha_{2}\right)z_{1}z_{2} \\
&+\left(-\alpha^{*}\frac{n_{1}^{2}}{\ell^{2}}A_{11}^{(d,2)}\alpha_{5}+\alpha^{*}\frac{n_{1}n_{2}}{\ell^{2}}A_{11}^{(d,2)}\alpha_{2}\right)z_{1}z_{2}+\left(-\alpha^{*}\frac{n_{2}^{2}}{\ell^{2}}A_{02}^{(d,1)}\alpha_{6}+\alpha^{*}\frac{n_{2}^{2}}{\ell^{2}}A_{02}^{(d,1)}\alpha_{3}\right)z_{2}^{2} \\
&+A_{11}z_{1}z_{2}\alpha_{5}+A_{02}z_{2}^{2}\alpha_{6}, \\
&\left(\begin{aligned}
&\left[\widetilde{F}_{2}(\Phi z_{x},0),\beta_{n_{2}}^{(1)}\right] \\
&\left[\widetilde{F}_{2}(\Phi z_{x},0),\beta_{n_{2}}^{(2)}\right]
\end{aligned}\right) \\
&=\left(-\alpha^{*}\frac{n_{1}^{2}}{\ell^{2}}A_{20}^{(d,1)}\beta_{4}+\alpha^{*}\frac{n_{1}^{2}}{\ell^{2}}A_{20}^{(d,1)}\beta_{1}\right)z_{1}^{2}+\left(-\alpha^{*}\frac{n_{2}^{2}}{\ell^{2}}A_{11}^{(d,1)}\beta_{5}+\alpha^{*}\frac{n_{1}n_{2}}{\ell^{2}}A_{11}^{(d,1)}\beta_{2}\right)z_{1}z_{2} \\
&+\left(-\alpha^{*}\frac{n_{1}^{2}}{\ell^{2}}A_{11}^{(d,2)}\beta_{5}+\alpha^{*}\frac{n_{1}n_{2}}{\ell^{2}}A_{11}^{(d,2)}\beta_{2}\right)z_{1}z_{2}+\left(-\alpha^{*}\frac{n_{2}^{2}}{\ell^{2}}A_{02}^{(d,1)}\beta_{6}+\alpha^{*}\frac{n_{2}^{2}}{\ell^{2}}A_{02}^{(d,1)}\beta_{3}\right)z_{2}^{2} \\
&+A_{20}z_{1}^{2}\alpha_{5}+A_{11}z_{1}z_{2}\alpha_{6}.
\end{aligned}\end{equation}
Furthermore, by combining with (C.2) and (C.3), when $n_{1}\neq 2n_{2}$, $n_{2}\neq 2n_{1}$ and $n_{2}\neq 3n_{1}$, we have
\begin{equation}\begin{aligned}
&\left(\begin{aligned}
&\left[f_{2}^{2}(z,0,0),\beta_{n}^{(1)}\right] \\
&\left[f_{2}^{2}(z,0,0),\beta_{n}^{(2)}\right]
\end{aligned}\right) \\
&=\begin{cases}
\frac{1}{\sqrt{\ell\pi}}\left(A_{20}z_{1}^{2}+A_{02}z_{2}^{2}\right),~n=0, \\
-\Phi_{n_{1}}\Psi_{n_{1}}\left(-\alpha^{*}\frac{n_{1}^{2}}{\ell^{2}}A_{20}^{(d,1)}(\alpha_{4}-\alpha_{1})z_{1}^{2}-\alpha^{*}\frac{n_{2}^{2}}{\ell^{2}}A_{02}^{(d,1)}(\alpha_{6}-\alpha_{3})z_{2}^{2}\right. \\
\left.+\left(-\alpha^{*}\left(\frac{n_{2}^{2}}{\ell^{2}}A_{11}^{(d,1)}+\frac{n_{1}^{2}}{\ell^{2}}A_{11}^{(d,2)}\right)\alpha_{5}+\alpha^{*}\frac{n_{1}n_{2}}{\ell^{2}}\left(A_{11}^{(d,1)}+A_{11}^{(d,2)}\right)\alpha_{2}\right)z_{1}z_{2}+A_{11}z_{1}z_{2}\alpha_{5}+A_{02}z_{2}^{2}\alpha_{6}\right), \\
n=n_{1}, \\
-\Phi_{n_{2}}\Psi_{n_{2}}\left(-\alpha^{*}\frac{n_{1}^{2}}{\ell^{2}}A_{20}^{(d,1)}(\beta_{4}-\beta_{1})z_{1}^{2}-\alpha^{*}\frac{n_{2}^{2}}{\ell^{2}}A_{02}^{(d,1)}(\beta_{6}-\beta_{3})z_{2}^{2}\right. \\
\left.+\left(-\alpha^{*}\left(\frac{n_{2}^{2}}{\ell^{2}}A_{11}^{(d,1)}-\alpha^{*}\frac{n_{1}^{2}}{\ell^{2}}A_{11}^{(d,2)}\right)\beta_{5}+\alpha^{*}\frac{n_{1}n_{2}}{\ell^{2}}\left(A_{11}^{(d,1)}+A_{11}^{(d,2)}\right)\beta_{2}\right)z_{1}z_{2}+A_{20}z_{1}^{2}\alpha_{5}+A_{11}z_{1}z_{2}\alpha_{6}\right), \\
n=n_{2}, \\
\frac{1}{\sqrt{2\ell\pi}}\left(-2\alpha^{*}\frac{n_{1}^{2}}{\ell^{2}}A_{20}^{(d,1)}+A_{20}\right)z_{1}^{2},~n=2n_{1}, \\
\frac{1}{\sqrt{2\ell\pi}}\left(-2\alpha^{*}\frac{n_{2}^{2}}{\ell^{2}}A_{02}^{(d,1)}+A_{02}\right)z_{2}^{2},~n=2n_{2}, \\
\frac{1}{\sqrt{2\ell\pi}}\left(-\alpha^{*}\frac{n_{2}^{2}}{\ell^{2}}A_{11}^{(d,1)}-\alpha^{*}\frac{n_{1}^{2}}{\ell^{2}}A_{11}^{(d,2)}+\frac{1}{\sqrt{2\ell\pi}}A_{11}\right)z_{1}z_{2},~n=n_{1}+n_{2}, \\
\frac{1}{\sqrt{2\ell\pi}}\left(-\alpha^{*}\frac{n_{2}^{2}}{\ell^{2}}A_{11}^{(d,1)}-\alpha^{*}\frac{n_{1}^{2}}{\ell^{2}}A_{11}^{(d,2)}+A_{11}\right)z_{1}z_{2},~n=|n_{2}-n_{1}|.
\end{cases}\end{aligned}\end{equation}
Notice that $n_{1}=3$ and $n_{2}=4$ are chosen for the numerical simulations, thus we have $n_{1}\neq 2n_{2}$, $n_{2}\neq 2n_{1}$ and $n_{2}\neq 3n_{1}$. Therefore, by combining with (C.1), (C.4) and
\begin{equation*}
\left(\begin{aligned}
&\left[M_{2}^{2}\left(h_{n}(z)\gamma_{n}(x)\right),\beta_{n}^{(1)}\right] \\
&\left[M_{2}^{2}\left(h_{n}(z)\gamma_{n}(x)\right),\beta_{n}^{(2)}\right]
\end{aligned}\right)=\left(\begin{aligned}
&\left[f_{2}^{2}(z,0,0),\beta_{n}^{(1)}\right] \\
&\left[f_{2}^{2}(z,0,0),\beta_{n}^{(2)}\right]
\end{aligned}\right),
\end{equation*}
we have
\begin{equation}\begin{aligned}
&n=0~\left\{\begin{aligned}
&z_{1}^{2}: -\mathcal{L}_{0}(h_{0,20})=\frac{1}{\sqrt{\ell\pi}}A_{20}, \\
&z_{1}z_{2}: -\mathcal{L}_{0}(h_{0,11})=(0,0)^{T}, \\
&z_{2}^{2}: -\mathcal{L}_{0}(h_{0,02})=\frac{1}{\sqrt{\ell\pi}}A_{02},
\end{aligned}\right. \\
&n=2n_{1}~\left\{\begin{aligned}
&z_{1}^{2}: -\mathcal{L}_{2n_{1}}(h_{2n_{1},20})=-\frac{2}{\sqrt{2\ell\pi}}\alpha^{*}\frac{n_{1}^{2}}{\ell^{2}}A_{20}^{(d,1)}+\frac{1}{\sqrt{2\ell\pi}}A_{20}, \\
&z_{1}z_{2}: -\mathcal{L}_{2n_{1}}(h_{2n_{1},11})=(0,0)^{T}, \\
&z_{2}^{2}: -\mathcal{L}_{2n_{1}}(h_{2n_{1},02})=(0,0)^{T},
\end{aligned}\right. \\
&n=2n_{2}~\left\{\begin{aligned}
&z_{1}^{2}: -\mathcal{L}_{2n_{2}}(h_{2n_{2},20})=(0,0)^{T}, \\
&z_{1}z_{2}: -\mathcal{L}_{2n_{2}}(h_{2n_{2},11})=(0,0)^{T}, \\
&z_{2}^{2}: -\mathcal{L}_{2n_{2}}(h_{2n_{2},02})=\frac{1}{\sqrt{2\ell\pi}}\left(-2\alpha^{*}\frac{n_{2}^{2}}{\ell^{2}}A_{02}^{(d,1)}+A_{02}\right),
\end{aligned}\right. \\
&n=n_{1}+n_{2}~\left\{\begin{aligned}
&z_{1}^{2}: -\mathcal{L}_{n_{1}+n_{2}}(h_{n_{1}+n_{2},20})=(0,0)^{T}, \\
&z_{1}z_{2}: -\mathcal{L}_{n_{1}+n_{2}}(h_{n_{1}+n_{2},11})=\frac{1}{\sqrt{2\ell\pi}}\left(-\alpha^{*}\frac{n_{2}^{2}}{\ell^{2}}A_{11}^{(d,1)}-\alpha^{*}\frac{n_{1}^{2}}{\ell^{2}}A_{11}^{(d,2)}+A_{11}\right), \\
&z_{2}^{2}: -\mathcal{L}_{n_{1}+n_{2}}(h_{n_{1}+n_{2},02})=(0,0)^{T},
\end{aligned}\right. \\
&n=|n_{2}-n_{1}|~\left\{\begin{aligned}
&z_{1}^{2}: -\mathcal{L}_{|n_{2}-n_{1}|}(h_{|n_{2}-n_{1}|,20})=(0,0)^{T}, \\
&z_{1}z_{2}: -\mathcal{L}_{|n_{2}-n_{1}|}(h_{|n_{2}-n_{1}|,11})=-\frac{1}{\sqrt{2\ell\pi}}\alpha^{*}\frac{n_{2}^{2}}{\ell^{2}}A_{11}^{(d,1)}+\frac{1}{\sqrt{2\ell\pi}}\left(-\alpha^{*}\frac{n_{1}^{2}}{\ell^{2}}A_{11}^{(d,2)}+A_{11}\right), \\
&z_{2}^{2}: -\mathcal{L}_{|n_{2}-n_{1}|}(h_{|n_{2}-n_{1}|,02})=(0,0)^{T}.
\end{aligned}\right.
\end{aligned}\end{equation}
Then from (C.5), and by a direct calculation, we have
\begin{equation*}\left\{\begin{aligned}
&h_{0,20}=-\frac{1}{\sqrt{\ell\pi}}A_{1}^{-1}A_{20},~h_{0,11}=(0,0)^{T},~h_{0,02}=-\frac{1}{\sqrt{\ell\pi}}A_{1}^{-1}A_{02}, \\
&h_{2n_{1},20}=\frac{1}{\sqrt{2\ell\pi}}\left(\frac{(2n_{1})^{2}}{\ell^{2}}D_{0}-A_{1}\right)^{-1}\left(-2\alpha^{*}\frac{n_{1}^{2}}{\ell^{2}}A_{20}^{(d,1)}+A_{20}\right), \\
&h_{2n_{1},11}=(0,0)^{T},~h_{2n_{1},02}=(0,0)^{T},~h_{2n_{2},20}=(0,0)^{T},~h_{2n_{2},11}=(0,0)^{T}, \\
&h_{2n_{2},02}=\frac{1}{\sqrt{2\ell\pi}}\left(\frac{(2n_{2})^{2}}{\ell^{2}}D_{0}-A_{1}\right)^{-1}\left(-2\alpha^{*}\frac{n_{2}^{2}}{\ell^{2}}A_{02}^{(d,1)}+A_{02}\right), \\
&h_{n_{1}+n_{2},20}=(0,0)^{T}, \\
&h_{n_{1}+n_{2},11}=\frac{1}{\sqrt{2\ell\pi}}\left(\frac{(n_{1}+n_{2})^{2}}{\ell^{2}}D_{0}-A_{1}\right)^{-1}\left(-\alpha^{*}\frac{n_{2}^{2}}{\ell^{2}}A_{11}^{(d,1)}-\alpha^{*}\frac{n_{1}^{2}}{\ell^{2}}A_{11}^{(d,2)}+A_{11}\right), \\
&h_{n_{1}+n_{2},02}=(0,0)^{T},~h_{|n_{2}-n_{1}|,20}=(0,0)^{T}, \\
&h_{|n_{2}-n_{1}|,11}=\frac{1}{\sqrt{2\ell\pi}}\left(\frac{|n_{2}-n_{1}|^{2}}{\ell^{2}}D_{0}-A_{1}\right)^{-1}\left(-\alpha^{*}\frac{n_{2}^{2}}{\ell^{2}}A_{11}^{(d,1)}-\alpha^{*}\frac{n_{1}^{2}}{\ell^{2}}A_{11}^{(d,2)}+A_{11}\right), \\
&h_{|n_{2}-n_{1}|,02}=(0,0)^{T}.
\end{aligned}\right.\end{equation*}


\end{document}